\documentclass[12pt]{amsart}
\usepackage[margin=1.2in]{geometry}
\usepackage{color}

\title[Algebra of the Infrared with Curve--Valued Potential]{Algebra of the Infrared with Curve--Valued Potential}

\author[Longfei Li]{Longfei Li}

\usepackage{hyperref}

\date{\vspace{-5ex}}
\usepackage{amsthm, amsfonts, mathrsfs}
\usepackage{ dsfont }
\usepackage{amssymb}
\usepackage{enumerate}
\usepackage{graphicx}
\usepackage{subfig}
\usepackage{amscd}
\usepackage{enumitem}

\usepackage{float}
\usepackage{bbm}
\usepackage{amsmath}
\usepackage{comment}
\usepackage{hyperref}
\usepackage{listings}
\usepackage{color}
\usepackage[percent]{overpic}

\usepackage{diagbox}
\usepackage[dvipsnames]{xcolor}
\usepackage{algorithm}
\usepackage{booktabs}
\usepackage{algorithmicx}
\usepackage[noend]{algpseudocode}
\usepackage{tikz}
\allowdisplaybreaks

\hypersetup{
    colorlinks,
    citecolor=blue,
    filecolor=blue,
    linkcolor=blue,
    urlcolor=blue
}

\numberwithin{equation}{section}
\newtheorem{theorem}[equation]{Theorem}
\newtheorem{proposition}[equation]{Proposition}
\newtheorem{lemma}[equation]{Lemma}

\newtheorem{definition}[equation]{Definition}
\newtheorem{remark}[equation]{Remark}

\newtheorem{example}{Example}

\newtheorem{conjecture}[equation]{Conjecture}

\newcommand{\bk}{\Bbbk}

\newcommand{\Hom}{\mathrm{Hom}}

\newcommand{\id}{\mathrm{id}}

\newcommand{\FS}{\mathcal{FS}}
\newcommand{\Crit}{\operatorname{Crit}}
\newcommand{\tp}{\tilde p}

\begin{document}
	\date{}

\address{\textrm{(Longfei Li)}
	Department of Mathematics,
	Kansas State University,
	Manhattan, Kansas 66502 USA}
\email{longfeili@ksu.edu}

\maketitle

\begin{abstract}
We study an extension of the algebra of the infrared to curve--valued potentials, focusing on the elliptic curve case. Given a finite configuration of points on an elliptic curve, we construct associated \(L_\infty\)- and \(A_\infty\)-algebras. In contrast with the classical planar setting, the resulting \(A_\infty\)-structure depends essentially on the choice of extra data, leading to new phenomena involving the fundamental group of the base curve. We also discuss the expected relation of these constructions to Fukaya--Seidel categories.
\end{abstract}

\tableofcontents

\section{Introduction}

\subsection{Background}
The algebra of the infrared was introduced by Gaiotto--Moore--Witten
\cite{GMW15} as an algebraic framework for studying massive two--dimensional
\(\mathcal N=(2,2)\) quantum field theories.  In such a theory, the set of
vacua is finite.  From this finite set one obtains soliton spaces between
pairs of vacua, together with higher operations produced by configurations of
solitons and instantons.

A central structure in the Gaiotto--Moore--Witten formalism is an
\(L_\infty\)-algebra associated to a generic finite configuration $A\subset \mathbb R^2$.
The points of \(A\) represent the vacua of the theory.  The operations are
encoded by planar webs: plane graphs whose faces are labelled by elements of
\(A\), with edge directions constrained by the corresponding differences of
vacua.  These web configurations organize the possible degenerations of
instanton moduli spaces, and the resulting boundary identities give the
\(L_\infty\)-relations.

After choosing a half-plane containing \(A\), the same data also gives rise to
a directed \(A_\infty\)-algebra, or more generally a directed
\(A_\infty\)-category when coefficient systems are included.  This directed
category has an upper--triangular, or semi-orthogonal, structure reflecting the
linear order on the vacua determined by the chosen half-plane.  The instanton
counts then define Maurer--Cartan elements which deform this directed
\(A_\infty\)-category.

For Landau--Ginzburg models, these deformed \(A_\infty\)-categories are
expected to recover the corresponding category of \(D\)-branes.  Mathematically,
this category is realized as the Fukaya--Seidel category of the
Landau--Ginzburg potential; see \cite{Seidel,HIV}.

\subsection{The mathematical model}
Kapranov--Kontsevich--Soibelman gave a mathematical interpretation of this
Gaiotto--Moore--Witten construction using secondary polytopes and deformation
theory \cite{KKS}. In their approach, a finite configuration
of points in an affine space determines a collection of secondary polytopes.
The factorization properties of the faces of these polytopes encode the
possible decompositions of polygons, and this combinatorics produces an
\(L_\infty\)-algebra. In the two--dimensional case, after choosing a half-plane, one obtains a relative version of the construction which produces a
directed \(A_\infty\)-algebra. The main universality theorem of \cite{KKS}
relates these two structures: the \(L_\infty\)-algebra associated to the point
configuration maps to the deformation complex of the directed
\(A_\infty\)-algebra, and in fact controls its deformations.

This mathematical model gives an algebraic counterpart of the physical picture in \cite{GMW15}. The points of the configuration represent vacua, the edges between them correspond to soliton sectors, and the polygonal decompositions model the possible degenerations of instanton moduli spaces. In this way, the algebra of the infrared provides a bridge between three types of data: the combinatorics of point configurations and secondary polytopes, the deformation theory of \(A_\infty\)-algebras, and the symplectic geometry of Fukaya--Seidel categories.

A further development of this point of view appears in the work of
Kapranov--Soibelman--Soukhanov on perverse schobers and the algebra of the
infrared~\cite{KSS}, building on the theory of perverse schobers introduced by
Kapranov--Schechtman~\cite{KapranovSchechtmanSchobers}.
There the algebra of the infrared is related to a categorified wall-crossing picture. Roughly speaking, instead of considering only vector spaces of solitons and algebraic operations among them, one organizes categories, functors, and wall-crossing data into a schober-like structure. This perspective is especially natural from the point of view of Fukaya--Seidel, where vanishing cycles, thimbles, and monodromy functors are inherently categorical objects. For a recent survey of the relation between the algebra of the infrared, secondary polytopes, and perverse schobers, see Kapranov--Soibelman~\cite{KapranovSoibelmanSurveyAoI}.

\subsection{Algebra of the infrared with curve--valued potential}
The purpose of the present paper is to study an analogue of this story for
curve--valued potentials. 

The first natural idea is to pass to the universal cover.  In the elliptic
curve case this obvious idea works particularly well.  Indeed, if
\(
E=\mathbb C/\Lambda
\)
is an elliptic curve, then its universal cover is the affine plane
\(\mathbb C\), and the deck transformations are translations.  Therefore,
after choosing lifts of the points of a finite configuration
\(A\subset E\), one obtains an ordinary finite point configuration
\(
\widetilde A\subset \mathbb C .
\)
The usual affine notions of convex hull, straight line segment, polygonal
subdivision, Euclidean area, and secondary polytope can then be applied
upstairs.  In this way the elliptic curve case reduces locally to the planar
construction of \cite{KKS}, while the new global feature is the dependence on
the choice of lifts, or equivalently on the sheet data with respect to the deck
group \(\Lambda\).

This also explains why the present construction is restricted to genus one.
By the uniformization theorem, the universal cover of a compact Riemann surface
of genus \(g>1\) is the disk, or equivalently the upper half-plane \(\mathbb H\); see, for
example, \cite{FarkasKra}.
Thus such a curve can be written as
\[
C\simeq \mathbb H/\Gamma,
\]
where \(\Gamma\subset \operatorname{PSL}_2(\mathbb R)\) is a cocompact
Fuchsian group.  
Although one can still pass to the universal cover, the cover
is no longer an affine vector space, and the deck transformations are no longer
translations.  Consequently there is no canonical affine notion of convex hull,
polygonal subdivision, or secondary polytope for a lifted configuration of
points.  One could instead try to formulate a different theory using
hyperbolic geodesic polygons or other additional geometric structures, but this
would no longer be a formal repetition of the secondary-polytope construction
used here.

One can still expect local versions of the construction. On a sufficiently small coordinate chart of the base curve, the potential looks like a complex--valued holomorphic function, and the local Fukaya--Seidel and complex Morse models should resemble the usual affine case. The difficulty is global: as one moves between charts and around nontrivial loops in the base curve, the local data are transformed by monodromy. Therefore, instead of attaching a single secondary-polytope algebra to one affine lifted configuration, one should expect a structure which organizes local Fukaya--Seidel data together with their monodromy and gluing.

This suggests that a possible higher genus generalization would be categorified, or schober-theoretic, closer in spirit to the theory of perverse schobers and to its relation with the algebra of the infrared studied in~\cite{KSS}. In such a theory, the affine secondary-polytope combinatorics would be replaced by a more intrinsic categorical object encoding local categories, monodromy functors, and gluing data over the base curve. We do not develop this higher genus generalization here. The present paper focuses on the elliptic curve case, where the universal cover is the affine plane \(\mathbb C\), so the algebra of the infrared construction of \cite{KKS} can be applied directly to lifted configurations, while the monodromy information is still retained through the dependence on relative lifts.

\subsection{The elliptic curve case}
We consider a holomorphic map
\[
W:X\longrightarrow E,
\]
where \(E\) is an elliptic curve. The critical values of \(W\) form a finite
point configuration
\(
A\subset E.
\)
Since \(E\) is not affine, this configuration does not directly determine an
ordinary secondary polytope. We therefore pass to the universal cover
\(
\pi:\mathbb C\longrightarrow E
\)
and work with lifted point configurations in \(\mathbb C\). After choosing
suitable lifts of the critical values, one obtains a finite configuration
\(
\widetilde A\subset \mathbb C.
\)
Applying the secondary-polytope construction to \(\widetilde A\) gives an
\(L_\infty\)-algebra, which we denote by
\(
\mathfrak g_{\widetilde A}.
\)

To construct the relative \(A_\infty\)-algebra, we choose an additional point
\(
p\in E\setminus A,
\)
called a stop, together with a lift
\(
\widetilde p\in \mathbb C.
\)
In the classical complex--valued setting, the choice of a half-plane is used to
break the cyclic symmetry of the vacua and to produce a directed, or upper
triangular, \(A_\infty\)-algebra. In the elliptic curve setting there is no
global affine half-plane with this property. Instead, the lifted stop
\(\widetilde p\) provides a cut in the universal cover. This cut turns the
cyclic order of the lifted configuration into a linear order, and hence
determines the directed structure of the associated \(A_\infty\)-algebra.

Using this ordered lifted configuration, we construct a directed
\(A_\infty\)-algebra
\(
R_{\widetilde A,\widetilde p}.
\)
We then construct an \(L_\infty\)-morphism
\[
\Phi_{\widetilde A,\widetilde p}\colon
\mathfrak g_{\widetilde A}
\longrightarrow
R\operatorname{Der}(R_{\widetilde A,\widetilde p}),
\]
where \(R\operatorname{Der}(R_{\widetilde A,\widetilde p})\) denotes the
derived derivation complex, or deformation complex, of
\(R_{\widetilde A,\widetilde p}\), as defined in
Section~\ref{sec:derived-derivations}. Then we prove the following theorem.

\begin{theorem}[Theorem~\ref{universality-elliptic}]
\label{thm:intro-universality}
The \(L_\infty\)-morphism
\[
\Phi_{\widetilde A,\widetilde p}\colon
\mathfrak g_{\widetilde A}
\longrightarrow
R\operatorname{Der}(R_{\widetilde A,\widetilde p})
\]
factors through an \(L_\infty\)-morphism
\[
\Psi_{\widetilde A,\widetilde p}\colon
\mathfrak g_{\widetilde A}
\longrightarrow
\overrightarrow{C}^{\geq 1}
\bigl(
R_{\widetilde A,\widetilde p},
R_{\widetilde A,\widetilde p}
\bigr)[1].
\]
Moreover, \(\Psi_{\widetilde A,\widetilde p}\) is a quasi-isomorphism.
\end{theorem}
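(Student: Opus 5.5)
The plan is to reduce to the planar universality theorem of \cite{KKS}, with one new ingredient: the linear order on $\widetilde A$ is produced by the lifted stop $\widetilde p$ rather than by a half-plane.

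\textbf{Step 1: planar realization.} First I would show that the order induced by the cut at $\widetilde p$ can be realized by a half-plane. Sort the points of $\widetilde A$ by the argument of $a-\widetilde p$, measured from the cut ray. Since $\widetilde A$ is finite and generic, there is a direction $\theta$ with two properties. First, the ordering of $\widetilde A$ by the linear functional $\ell_\theta(z)=\operatorname{Re}(e^{-i\theta}z)$ agrees with the cut order on every pair appearing as an edge of a polygon in a subdivision. Second, $\widetilde A$ lies in an open half-plane $H$ in the sense of \cite{KKS}. If this fails for the naive choice, I would replace the cut order by the projective order coming from an actual half-plane adapted to $\widetilde p$. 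This would be justified by the definition of $R_{\widetilde A,\widetilde p}$: its morphism spaces and higher products are defined by lifted segments and polygons in $\mathbb C$, and these only use the affine structure of $\mathbb C$. With this in hand, $R_{\widetilde A,\widetilde p}$ is identified, as a directed $A_\infty$-algebra, with the KKS relative algebra $R_{\widetilde A,H}$. Under the same identification, $\Phi_{\widetilde A,\widetilde p}$ becomes the KKS morphism $\mathfrak g_{\widetilde A}\to R\operatorname{Der}(R_{\widetilde A,H})$. This holds because both are built from the same secondary-polytope face factorizations of the lifted configuration. Moreover $\mathfrak g_{\widetilde A}$ depends only on $\widetilde A\subset\mathbb C$.

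\textbf{Step 2: factorization.} The Taylor components of $\Phi$ are indexed by polygons and subdivisions with vertices in $\widetilde A$. Each such component lands in Hochschild cochains that have at least one input and respect the directed order. This is because every vertex of a polygon lies weakly between its minimal and maximal vertices in the linear order. These cochains form the subcomplex $\overrightarrow{C}^{\geq 1}(R,R)[1]$. It is closed under the Gerstenhaber bracket and the Hochschild differential, so it is an $L_\infty$-subalgebra, and $\Psi$ is the corestriction of $\Phi$. The $L_\infty$-relations for $\Psi$ are then inherited from those of $\Phi$. One also has to check that the inclusion of this subcomplex into $R\operatorname{Der}$ is the comparison map of Section~\ref{sec:derived-derivations}.

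\textbf{Step 3: quasi-isomorphism.} I would compare linear parts. The linear component $\Psi_1$ sends a generator of $\mathfrak g_{\widetilde A}$ attached to a convex polygon $P$ to the directed cochain $\mu_P$ determined by the vertices of $P$ in linear order. To compute cohomology, filter both sides by the number of points of $\widetilde A$ involved, equivalently by the pair (minimal vertex, maximal vertex). On associated graded pieces, $\overrightarrow{C}^{\geq1}$ restricted to an interval of the order becomes a bar-type complex of directed chains. The KKS computation identifies its cohomology with the cochains of the secondary polytope of that subconfiguration, relative to its boundary. Since $\Psi_1$ matches these generators, it is an isomorphism on the $E_1$ page, and the filtrations are finite, so $\Psi$ is a quasi-isomorphism. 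Here I would quote the graded computation from \cite{KKS} rather than redo it.

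The main obstacle is Step 1. One must make sure that the cut at $\widetilde p$ really gives an order compatible with some half-plane for \emph{all} polygons used, and not just locally. If the configuration winds around $\widetilde p$, the argument order is only cyclic. In that case the directed algebra is not literally a KKS algebra. Then I would instead run Steps 2 and 3 intrinsically, using only the fact that polygons not containing $\widetilde p$ inherit a linear order from the cut. I would also check that the filtration argument of Step 3 needs only this local linear-order compatibility.
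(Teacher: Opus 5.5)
\textbf{Comparison of routes.} You try to identify $R_{\widetilde A,\widetilde p}$ and $\Phi_{\widetilde A,\widetilde p}$ with the planar relative data of \cite{KKS} and then import the planar theorem. The paper never makes such an identification. It reruns the KKS argument directly on the rooted algebra: it writes $\overrightarrow{C}^{\bullet}$ as a sum over admissible sequences $(P_0,\dots,P_n)$ viewed as closed edge paths, filters by total handle length, identifies $\operatorname{gr}^0$ with $\mathfrak g$, and shows the higher graded pieces are exact.

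Your Step 2 is essentially the paper's factorization argument. The precise reason for directedness is that the rooted pieces of a $1$-finite subdivision are consecutive sectors at $\widetilde p$ separated by the rays $[\widetilde p,i_\nu]$, and the contraction maps are linear over the intermediate $S_{i_\nu}$.

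The obstacle you single out in Step 1 does not arise. The stop is always taken far from $\widetilde A^\circ$ with $u(\widetilde p)$ in an open chamber. For such $\widetilde p$, the counterclockwise order seen from $\widetilde p$ is the order of a linear functional with kernel $\mathbb R\,u(\widetilde p)$; this is exactly why the walls are the directions of the differences $\widetilde a_i-\widetilde a_j$.

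What Step 1 really needs, and what ``only the affine structure of $\mathbb C$ is used'' does not supply, is a comparison of combinatorics. The finite rooted polygons $\operatorname{Conv}(B\cup\{\widetilde p\})$ must have the same combinatorics as planar rooted polygons with the root at infinity in direction $u(\widetilde p)$ (the picture behind the ``infinite edges'' in the paper's proof). That means the same geometric marked sets, the same coarse rooted and $1$-finite subdivisions, and the same faces and orientation signs of secondary polytopes. This requires a limiting argument. For example, homogenize so that $(\widetilde p,1)$ becomes a positive multiple of a vector tending to $(u(\widetilde p),0)$, and use stability of these finitely many conditions under small perturbation of a generic configuration.

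Once this is in place, the planar theorem gives both the factorization and the quasi-isomorphism, and your Steps 2 and 3 become unnecessary. Your route makes transparent that the chamber enters only through $u(\widetilde p)$. The paper's route works with $R_{\widetilde p}$ as defined and avoids the finite-versus-infinite-root comparison.

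\textbf{The gap is in Step 3.} As written it is not a correct argument, and it is exactly what your fallback ``intrinsic'' version would rest on.

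First, $\Psi_1(e_{A'})$ is not a single directed cochain read off from the vertex order. It has a component for every $1$-finite subdivision, that is, for every choice of left and right handles. Only the handle-free component survives on an associated graded: its inputs are the rooted triangles over the edges of $\partial^+Q'$, and its output is the rooted polygon over $\partial^-Q'$.

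Second, filtering by the output interval $[i_0,i_n]$ is legitimate, since the first and last Hochschild terms strictly enlarge it. But a fixed-interval graded piece is not ``the cochains of one secondary polytope relative to its boundary.'' It contains every convex unrooted polygon with extreme vertices $i_0,i_n$. It also contains many summands whose closed path has handles or does not bound a convex polygon. Showing that these extra summands contribute nothing is the whole content of the theorem, and it is not what the computation in \cite{KKS} (reproduced in the paper's Steps 3--4) says.

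The missing idea is the splitting analysis. On the associated graded, the Hochschild differential only splits some $P_\nu$ along a ray $[\widetilde p,s]$, and this does not change the closed edge path. So the graded complex breaks up over closed paths into pieces $F^{P_0}_{P_1,\dots,P_n}\otimes C^\bullet(\Delta^{r-1})$, where $r$ is the number of optional cut vertices. These pieces are exact for $r\ge 1$. The handle-free survivors with $r=0$ have each $P_\nu$ a triangle and the broken line upwardly convex against the downwardly convex $\partial^-P_0$. These are exactly the boundaries of convex unrooted polygons, that is, the image of $\Psi$. The paper uses the handle-length filtration to dispose of the summands with handles.
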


The eventual goal is to relate these algebraic structures to Fukaya--Seidel
categories. For a curve--valued potential \(W:X\to E\), one can define a
Fukaya--Seidel category using admissible thimbles with a chosen basepoint on
the curve, as in the framework of Kerr--Soibelman
\cite{KS}. In contrast with the case of a potential valued in
\(\mathbb C\), admissible paths on \(E\) may wind around nontrivial cycles
before reaching critical values. Therefore the set of admissible thimbles is
typically infinite. A finite distinguished collection of thimbles gives a
finite directed Fukaya--Seidel subcategory and hence a total algebra $R_S$.
We expect the algebra \(R_S\) to be obtained by deforming the algebra
\(R_{\widetilde A_S}\) associated to the corresponding lifted critical value
configuration. 

\begin{conjecture}[Conjecture~\ref{conj:fs-total-algebra-from-aoi}]
    The lifted complex Morse model should produce a
Maurer--Cartan element
\[
\gamma_S\in \mathfrak g_{\widetilde A_S}^{1},
\]
and the deformation of \(R_{\widetilde A_S}\) by \(\gamma_S\) should recover
the total algebra of the finite Fukaya--Seidel subcategory generated by the
chosen thimbles:
\[
R_{\widetilde A_S,\gamma_S}\simeq R_S.
\]
\end{conjecture}

This should be viewed as the curve--valued analogue of the conjectural comparison between the
algebra of the infrared and the Fukaya--Seidel category in the complex--valued
case.
The dependence on the choice of lifts is not an artifact, but reflects the topology
of the base curve. Indeed, changing lifts corresponds to acting by deck
transformations, or equivalently to wrapping paths around nontrivial cycles of
the elliptic curve. Thus the algebraic structures obtained from lifted point
configurations carry a monodromy flavor. Different choices of lifts may give
different \(L_\infty\)-algebras, different \(A_\infty\)-algebras, and different
directed presentations of the Fukaya--Seidel category.

\subsection{Relation to previous work and new features}

Most of the algebraic constructions used in this paper are based on the
framework of Kapranov--Kontsevich--Soibelman.  In particular, the
construction of the \(L_\infty\)-algebra from secondary polytopes, the
factorization of faces of secondary polytopes, the relative directed
\(A_\infty\)-algebra obtained after choosing an ordering datum, and the
universality morphism to the deformation complex of this \(A_\infty\)-algebra
are elliptic-curve analogues of the corresponding constructions in
\cite{KKS}.  Whenever an argument is a direct adaptation of the
complex--valued case, we indicate this by referring to the relevant construction
or result.

The main new feature of the present work is the dependence on lift data.  In
the elliptic-curve--valued setting, the critical values form a configuration on
\(E\), and the algebraic construction is performed after choosing lifts of
these points to the universal cover.  A simultaneous deck transformation of all
lifted points gives an equivalent configuration, but changing the lift of
individual points changes the relative sheet data and may change the associated
\(L_\infty\)- and \(A_\infty\)-algebras.  Similarly, the role of the half-plane
in the complex--valued setting is replaced by the choice of a stop
\(p\in E\setminus A\) together with a lift \(\widetilde p\in\mathbb C\).  The
lifted stop determines a cut in the universal cover and hence a
chamber-dependent linear ordering of the lifted configuration.  This lift
dependence and chamber dependence are the main new features of the
elliptic--curve--valued theory.

On the Fukaya--Seidel side, the same phenomenon appears as the monodromy
dependence of finite directed collections of admissible thimbles.  Wrapping an
admissible path around a nontrivial cycle of \(E\) changes the lift of the
corresponding critical value by a deck transformation.  Thus the algebraic
dependence on lifted configurations reflects the geometric dependence on
relative monodromy data.  In this sense, the present work should be viewed as
an elliptic--curve--valued version of the algebra of the infrared, together
with an analysis of the additional chamber data which do not appear
in the ordinary complex--valued setting.

\subsection{Outline of this paper}
The paper is organized as follows.

In Section \ref{sec:secondary-factorization}, we recall the theory of secondary polytopes. We review the
factorization properties of their faces and explain how these properties give
rise to an algebra differential on a symmetric algebra. This is the basic
combinatorial input for the construction of the \(L_\infty\)-algebra.

In Section \ref{sec:ainf-linf-koszul}, we collect the algebraic background needed later in the paper.
We recall \(L_\infty\)-algebras, \(A_\infty\)-algebras, Maurer--Cartan
elements, deformation complexes, and the Koszul duality formalism used in the
construction of the universality morphism.

In Section \ref{sec:linf-elliptic}, we construct the \(L_\infty\)-algebra associated to a point
configuration on an elliptic curve. The construction is carried out by choosing
a lift of the point configuration to the universal cover \(\mathbb C\) and applying the
secondary-polytope construction to this lifted data.

In Section \ref{sec:ainf-elliptic-notation}, we study the relative setting. We add an extra point on the
elliptic curve, viewed as a basepoint or stop, and choose a lift of it. This
breaks the cyclic symmetry and produces a directed \(A_\infty\)-algebra. We
then construct an \(L_\infty\)-morphism from the \(L_\infty\)-algebra of the
lifted configuration to the deformation complex of this \(A_\infty\)-algebra.

In Section \ref{coeffecient-system}, we introduce coefficient systems for the previous algebraic
construction. These coefficient systems allow the \(L_\infty\)- and
\(A_\infty\)-structures to incorporate soliton spaces or Floer-theoretic
coefficient data, as in the algebra of the infrared.

In Section \ref{analysis}, we analyze the resulting \(A_\infty\)-algebra and the
\(L_\infty\)-morphism in more detail. We first describe the \(A_\infty\)-operations explicitly  and then give an elementwise description of the \(L_\infty\)-morphism.

In Section \ref{uni-thm}, we state and prove the main universality theorem. The theorem
identifies the \(L_\infty\)-algebra constructed from the lifted configuration
with the deformation complex controlling deformations of the associated
directed \(A_\infty\)-algebra, in the chamber determined by the chosen lift and
ordering.

Finally, in Section \ref{sec:mc-fs-torus}, we connect the algebraic construction with symplectic
geometry. We discuss the Fukaya--Seidel category of a curve--valued potential,
finite distinguished collections of admissible thimbles, and the total algebra
of the corresponding directed subcategory. By lifting the potential to the
universal cover, we formulate a curve--valued version of
the complex Morse model. This leads to conjectural comparisons between
Maurer--Cartan deformations of the \(A_\infty\)-algebras and total
algebras of Fukaya--Seidel subcategories.

\begingroup
\makeatletter
\let\addcontentsline\@gobblethree
\subsection*{Acknowledgements} I would like to express my deep gratitude to my advisor, Yan Soibelman, for
suggesting this problem, for many helpful discussions, and for his guidance
throughout this project. I am also grateful to Gabriel Kerr for useful conversations and comments, especially concerning Fukaya--Seidel categories and related geometric aspects of the construction.
\makeatother
\endgroup

\section{Reminder on secondary polytopes}
\label{sec:secondary-factorization}

We briefly recall the construction and basic properties of the secondary
polytope associated to a planar point configuration. For the general theory of secondary polytopes, we refer to
\cite{GKZ1994} for the original construction and to \cite{DRS} for further
developments and applications.

\subsection{Triangulations and secondary polytopes}
Let $A=\{a_1,\dots,a_n\}\subset \mathbb R^2$ be a finite subset of points in general position and let  $Q := \mathrm{Conv}(A)$ be the convex hull of $A$. 

\begin{definition}
 A \emph{marked polytope} is a pair
\((Q,A)\), where \(Q\subset \mathbb R^2\) is a convex polytope and
\(A\subset \mathbb R^2\) is a finite subset such that $Q=\operatorname{Conv}(A)$.
Equivalently, \(A\) contains all vertices of \(Q\), but may also contain
additional points on the boundary or in the interior of \(Q\).

A \emph{marked subpolytope} of \((Q,A)\) is a marked polytope
\((Q',A')\) such that \(A'\subset A\) and $ Q'=\operatorname{Conv}(A')$.
We write $(Q',A')\subset (Q,A)$ to indicate that \((Q',A')\) is a marked subpolytope of \((Q,A)\).
\end{definition}

\begin{definition}

A \emph{polyhedral subdivision} $\mathcal S$ of $(Q,A)$ is a
finite collection of pairs $\{(Q_i,A_i)\}_{i\in I}$ such that:
\begin{enumerate}
  \item $Q = \bigcup_{i\in I} Q_i$,
  \item $Q_i = \mathrm{Conv}(A_i)$ for $A_i = A \cap Q_i$,
  \item for $i\neq j$, the intersection $Q_i\cap Q_j$ is either empty or a common
  face of both $Q_i$ and $Q_j$.
\end{enumerate}
A subdivision is called a \emph{triangulation} if each $Q_i$ is a simplex.
    
\end{definition}

\begin{definition}\label{def:geometric-subdivision}
A marked subpolytope $(Q',A')\subset (Q,A)$ is called \emph{geometric} if
$A'=A\cap Q'$. A polyhedral subdivision
$\mathcal P=\{(Q'_\nu,A'_\nu)\}$ of $(Q,A)$ is called \emph{geometric} if each
$(Q'_\nu,A'_\nu)$ is a geometric marked subpolytope.
\end{definition}

A subdivision $\mathcal S$ is said to be \emph{regular} if there exists a height
function $\omega:A\to \mathbb R$ such that $\mathcal S$ is induced by the lower
faces of the convex hull of the lifted points
\[
\{(a,\omega(a)) \mid a\in A\} \subset \mathbb R^2\times\mathbb R.
\]
Equivalently, $\mathcal S$ is regular if it arises as the projection of the lower
envelope of a convex, piecewise--linear function on $Q$ whose domains of
linearity have vertices in $A$.

Regular subdivisions of $(Q,A)$ form a finite poset under refinement: for two
regular subdivisions $\mathcal S$ and $\mathcal S'$, we write
\[
\mathcal S' \preceq \mathcal S
\]
if $\mathcal S'$ is a refinement of $\mathcal S$.

A nontrivial regular subdivision \(\mathcal S\) is called \emph{coarse} if it has no
regular coarsening other than the trivial subdivision. Equivalently, \(\mathcal S\) is
minimal among nontrivial regular subdivisions with respect to the refinement
order. 

\medskip

The \emph{secondary polytope} $\Sigma(A)$ is a convex polytope whose face poset is
anti--isomorphic to the poset of regular subdivisions of $(Q,A)$. More precisely:
\begin{itemize}
  \item vertices of $\Sigma(A)$ correspond to regular triangulations of $(Q,A)$;
  \item a $k$--dimensional face of $\Sigma(A)$ corresponds to a regular
  subdivision with $k+1$ maximal cells;
  \item inclusion of faces corresponds to refinement of subdivisions in the
  opposite direction;
  \item In particular, coarse regular subdivisions
correspond to facets of \(\Sigma(A)\).
\end{itemize}

\begin{definition}
For a triangulation $\mathcal T$, its GKZ--vector $\phi_{\mathcal T}\in \mathbb R^{A}$ is defined by
\[
\phi_{\mathcal T}(a)
= \sum_{\substack{\Delta\in\mathcal T\\ a\in\Delta}}
\mathrm{Area}(\Delta),
\qquad a\in A,
\]
where the sum runs over all triangles $\Delta$ of $\mathcal T$ containing $a$,
and $\mathrm{Area}(\Delta)$ denotes the Euclidean area of $\Delta$.

The \emph{secondary polytope} $\Sigma(A)$ can be realized as the convex hull of the
\emph{GKZ--vectors} of all regular triangulations of $(Q,A)$.

\end{definition}

\begin{remark}[Fiber-polytope description]
\label{rem:fiber-polytope-secondary}
There is another equivalent description of the secondary polytope, due to
Billera--Sturmfels, in terms of fiber polytopes; see \cite{BS}.
Let \(A\subset \mathbb R^d\) be a finite point configuration and let
\[
Q=\operatorname{Conv}(A).
\]
Consider the standard simplex
\[
\Delta_A
=
\left\{
(p_\omega)_{\omega\in A}\in \mathbb R^A
\ \middle|\
p_\omega\geq 0,\quad \sum_{\omega\in A}p_\omega=1
\right\}.
\]
There is a natural affine projection
\[
\pi:\Delta_A\longrightarrow Q,
\qquad
(p_\omega)_{\omega\in A}
\longmapsto
\sum_{\omega\in A}p_\omega\,\omega .
\]
The secondary polytope can be described, up to the standard normalization, as
the set of vector integrals
\[
\int_Q s(q)\,d\operatorname{Vol}(q)\in \mathbb R^A,
\]
where \(s:Q\to \Delta_A\) ranges over continuous sections of \(\pi\).

More precisely, if \(\mathcal T\) is a triangulation of \(A\), then
\(\mathcal T\) determines a piecewise-linear section
\[
s_{\mathcal T}:Q\longrightarrow \Delta_A
\]
by writing each point \(q\in Q\) in barycentric coordinates with respect to
the simplex of \(\mathcal T\) containing \(q\).  For such a section one has
\[
\int_Q s_{\mathcal T}(q)\,d\operatorname{Vol}(q)
=
\frac{1}{d+1}\phi_{\mathcal T}.
\]
Indeed, the integral of each barycentric coordinate over a \(d\)-simplex
\(\Delta\) is \(\operatorname{Vol}(\Delta)/(d+1)\).  Thus the
Billera--Sturmfels fiber-polytope construction gives the same polytope as the
GKZ construction by the vectors \(\phi_{\mathcal T}\), up to the harmless
overall scalar factor \(1/(d+1)\), and possibly the additional normalization
by \(\operatorname{Vol}(Q)\) depending on convention.
\end{remark}

\begin{proposition}
\label{prop:dimension-secondary-polytope}
Let \(A=\{a_1,\dots,a_n\}\subset \mathbb R^d\) be a finite point configuration
which affinely spans \(\mathbb R^d\). Then the secondary polytope
\(\Sigma(A)\) has dimension
\[
\dim \Sigma(A)=n-d-1.
\]
\end{proposition}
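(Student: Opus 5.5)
We prove the two inequalities \(\dim\Sigma(A)\le n-d-1\) and \(\dim\Sigma(A)\ge n-d-1\) separately. Throughout we use the GKZ realization of \(\Sigma(A)\) as the convex hull of the vectors \(\phi_{\mathcal T}\in\mathbb R^A\), and the correspondence between faces of \(\Sigma(A)\) and regular subdivisions recalled above.

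\emph{Upper bound.} We exhibit \(d+1\) independent affine relations satisfied by every GKZ-vector. Write \(a=(a^{(1)},\dots,a^{(d)})\). For any triangulation \(\mathcal T\),
\[
\sum_{a\in A}\phi_{\mathcal T}(a)=\sum_{\Delta\in\mathcal T}(d+1)\operatorname{Vol}(\Delta)=(d+1)\operatorname{Vol}(Q).
\]
Similarly,
\[
\sum_{a\in A}\phi_{\mathcal T}(a)\,a=\sum_{\Delta}\operatorname{Vol}(\Delta)\sum_{v\in\Delta}v=(d+1)\sum_\Delta\int_\Delta x\,dx=(d+1)\int_Q x\,dx,
\]
because the barycenter of a simplex is the average of its vertices. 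This is the fiber-polytope identity of Remark~\ref{rem:fiber-polytope-secondary} applied coordinatewise. Both right-hand sides are independent of \(\mathcal T\). Hence \(\Sigma(A)\) lies in an affine subspace cut out by the linear functionals \(\phi\mapsto\sum_a\phi(a)\) and \(\phi\mapsto\sum_a\phi(a)a^{(i)}\) for \(i=1,\dots,d\). These \(d+1\) functionals are the rows of the matrix with columns \((1,a)\), \(a\in A\). That matrix has rank \(d+1\) precisely because \(A\) affinely spans \(\mathbb R^d\). Therefore \(\dim\Sigma(A)\le n-(d+1)\).

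\emph{Lower bound.} It suffices to show that the normal fan of \(\Sigma(A)\) has lineality space of dimension exactly \(d+1\). Equivalently, the set of height functions \(\omega\in\mathbb R^A\) inducing a fixed regular triangulation is an open cone. For generic \(\omega\) the induced subdivision is a triangulation \(\mathcal T_\omega\), and the vertex \(\phi_{\mathcal T_\omega}\) of \(\Sigma(A)\) is the minimizer of \(\langle\omega,\cdot\rangle\). This is the standard GKZ fact that the linear functional \(\omega\) is minimized exactly on the face of the subdivision induced by \(\omega\). We take it as part of the recalled face correspondence.

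Suppose \(\Sigma(A)\) lay in an affine subspace of dimension smaller than \(n-d-1\). Then there would be a functional \(\omega\), not of the form \(\omega(a)=\lambda(a)+c\) with \(\lambda\) linear, that is constant on \(\Sigma(A)\). A functional constant on \(\Sigma(A)\) is minimized on the whole polytope. By the face correspondence, \(\omega\) would then induce the trivial subdivision, meaning all lifted points \((a,\omega(a))\) lie on the lower faces of a single lower facet. Such a facet is the graph of an affine function, so \(\omega(a)=\lambda(a)+c\) for every \(a\in A\). This contradicts the choice of \(\omega\).

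Here one point needs care: \(A\) may contain points that are not vertices of \(Q\). In the trivial subdivision such points need not be lifted onto the facet, and they could lie strictly above it. To avoid this, we apply the argument to the negative functional \(-\omega\) as well. If \(\omega\) is constant on \(\Sigma(A)\), then both \(\omega\) and \(-\omega\) induce the trivial subdivision. Convexity applied to each point forces \(\omega(a)\le\lambda(a)+c\) and \(\omega(a)\ge\lambda(a)+c\). Hence \(\omega\) is affine on all of \(A\), and the lineality space is exactly the \((d+1)\)-dimensional space of affine functions. This gives \(\dim\Sigma(A)=n-d-1\).

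The main obstacle is justifying the minimization property linking \(\omega\) to the face \(\Sigma(A)\) in the lower bound. If needed, we will prove it directly via the identity \(\langle\omega,\phi_{\mathcal T}\rangle=(d+1)\int_Q g_{\omega,\mathcal T}\), where \(g_{\omega,\mathcal T}\) is the piecewise-linear interpolation of \(\omega\) on \(\mathcal T\). This integral is minimized exactly when the interpolation equals the lower convex envelope of the lifted points.
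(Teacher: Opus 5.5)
Your proof is correct. The upper bound is the same as the paper's. The difference is in the lower bound, which the paper does not actually prove.

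\textbf{Upper bound.} The paper only says that $\Sigma(A)$ lies in an affine subspace of codimension $d+1$ cut out by affine relations satisfied by all GKZ vectors. You make those relations explicit (total volume and first moment). You also verify their independence through the rank of the matrix with columns $(1,a)$.

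\textbf{Lower bound.} The paper simply asserts that $\Sigma(A)$ is full-dimensional in that subspace and defers to \cite{GKZ1994}. You prove it instead, by showing that the lineality space of the normal fan consists exactly of the affine functions on $A$. This is essentially the standard GKZ argument. It makes the proposition self-contained modulo the identity $\langle\omega,\phi_{\mathcal T}\rangle=(d+1)\int_Q g_{\omega,\mathcal T}$, at the cost of length; the paper gains brevity by citation.

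\textbf{Minor points.}
\begin{enumerate}
\item The sentence saying the lineality claim is ``equivalent'' to the cones of height functions inducing a fixed triangulation being open is not an equivalence. You never use it, so drop it.
\item In the $\pm\omega$ step you tacitly use a single affine function $\lambda+c$ for both inequalities. Say why this is legitimate: the lower envelopes of $\omega$ and of $-\omega$ (negated) both agree with $\omega$ at the vertices of $Q$, and these vertices affinely span $\mathbb R^d$.
\item If you invoke the marked form of the face correspondence, the $\pm\omega$ detour is unnecessary. There the whole polytope corresponds to $\{(Q,A)\}$ with every point of $A$ marked, so all lifted points already lie on one hyperplane.
\item Your fallback via the integral identity avoids the face correspondence altogether, as follows. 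If $\langle\omega,\cdot\rangle$ is constant on $\Sigma(A)$, then $g_{\omega,\mathcal T}=g_\omega$ and $g_{-\omega,\mathcal T}=g_{-\omega}$ for every regular triangulation $\mathcal T$. This holds because $g_{\omega,\mathcal T}\ge g_\omega$ pointwise, with equality for any triangulation refining the subdivision induced by $\omega$. Hence $g_\omega=-g_{-\omega}$ is both convex and concave, so it is an affine function $\ell$. It then sandwiches $\omega$ on $A$: $\ell(a)\le\omega(a)\le\ell(a)$.
\end{enumerate}
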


\begin{proof}
This is a standard property of secondary polytopes; see \cite{GKZ1994}.  The
secondary polytope lies in an affine subspace of \(\mathbb R^A\) of codimension
\(d+1\), determined by the affine relations satisfied by all GKZ vectors.
Since \(A\) affinely spans \(\mathbb R^d\), it is full-dimensional in this
affine subspace. Hence
\[
\dim \Sigma(A)=|A|-(d+1)=n-d-1.
\]
\end{proof}

\subsection{Factorization property}
Faces of $\Sigma(A)$ corresponding to a regular subdivision $\mathcal S$ are
canonically identified with products of secondary polytopes of its maximal
cells. This factorization property plays a central role in the construction of
algebra of the infrared.

Given a regular subdivision $\mathcal S$ of $(Q,A)$, let
\[
\mathcal S = \{(Q_i,A_i)\}_{i\in I}
\]
be its collection of maximal cells. Each $(Q_i,A_i)$ is a subpolytope of
$(Q,A)$ in the above sense, and $\mathcal S$ may be viewed as a decomposition of
$(Q,A)$ into subpolytopes.

Recall that regular subdivisions of $(Q,A)$ are in order--reversing
correspondence with faces of the secondary polytope $\Sigma(A)$. Let
$F_{\mathcal S}\subset \Sigma(A)$ denote the face corresponding to the regular
subdivision $\mathcal S$. We then have the following factorization property.

\begin{proposition}\label{prop:factorization}
There is a canonical identification
\[
F_{\mathcal S} \;\cong\; \prod_{i\in I} \Sigma(A_i),
\]
where $\Sigma(A_i)$ denotes the secondary polytope of the subconfiguration
$(Q_i,A_i)$. Under this identification, faces of $F_{\mathcal S}$ correspond to
independent refinements of the subdivisions of the cells $(Q_i,A_i)$.
    
\end{proposition}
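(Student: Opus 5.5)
The plan is to prove the factorization through the normal fan of \(\Sigma(A)\), i.e. through the description of faces by height functions. Recall that for \(\omega\in\mathbb R^A\) the face of \(\Sigma(A)\) minimizing the linear functional \(\langle\omega,-\rangle\) is the convex hull of the GKZ vectors \(\phi_{\mathcal T}\) of those regular triangulations \(\mathcal T\) refining the regular subdivision \(\mathcal S_\omega\) induced by \(\omega\). This is the standard GKZ description, and the anti-isomorphism of posets recalled above is essentially this statement. So fix \(\omega\) with \(\mathcal S_\omega=\mathcal S=\{(Q_i,A_i)\}_{i\in I}\). The face \(F_{\mathcal S}\) is then the convex hull of \(\{\phi_{\mathcal T}:\mathcal T\preceq\mathcal S,\ \mathcal T \text{ regular}\}\).

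The key steps, in order, are as follows.

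\textbf{Step 1: triangulations refining \(\mathcal S\).} Show that a regular triangulation \(\mathcal T\) refining \(\mathcal S\) is the same as a tuple \((\mathcal T_i)_{i\in I}\) of regular triangulations \(\mathcal T_i\) of \((Q_i,A_i)\).
\begin{itemize}
\item Given \(\mathcal T\preceq\mathcal S\), restricting to each cell gives \(\mathcal T_i\). Each \(\mathcal T_i\) is regular, induced by \(\omega'|_{A_i}\) where \(\omega'\) induces \(\mathcal T\).
\item Conversely, given regular \(\mathcal T_i\) induced by \(\omega_i\), take \(\omega+\varepsilon\eta\) for small \(\varepsilon>0\). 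Here \(\eta\) is a height function whose restriction to each \(A_i\) agrees with \(\omega_i\) up to an affine function.
\end{itemize}
The main obstacle is the existence of \(\eta\). Heights on different cells must be made compatible on shared faces. The first thing I would try is the observation that the \(\mathcal T_i\) glue to a subdivision of \(Q\) only if they induce the same subdivision on common edges. In dimension \(2\), with points in general position, each common edge has no interior points of \(A\), so gluing is automatic. Then \(\eta\) can be chosen cell by cell: the refinement inside each cell only depends on \(\omega_i\) modulo affine functions, and for small \(\varepsilon\) the lower envelope of \(\omega+\varepsilon\eta\) refines \(\mathcal S\) and restricts to \(\mathcal T_i\). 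This is the standard lemma in \cite{GKZ1994}.

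\textbf{Step 2: additivity of GKZ vectors.} By definition, \(\phi_{\mathcal T}=\sum_i \iota_i\phi_{\mathcal T_i}\), where \(\iota_i:\mathbb R^{A_i}\to\mathbb R^{A}\) is extension by zero. This holds because every triangle of \(\mathcal T\) lies in exactly one cell.

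\textbf{Step 3: the identification.} Consider the affine map
\[
\prod_i\mathbb R^{A_i}\to\mathbb R^A,\qquad (x_i)\mapsto\sum_i\iota_i x_i .
\]
By Steps 1 and 2, it sends the product of the vertex sets of the \(\Sigma(A_i)\) onto the vertex set of \(F_{\mathcal S}\). Since convex hulls commute with products and affine maps, the map sends \(\prod_i\Sigma(A_i)\) onto \(F_{\mathcal S}\).

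\textbf{Step 4: injectivity.} The map is injective on \(\prod_i\Sigma(A_i)\). To see this, compare dimensions via Proposition~\ref{prop:dimension-secondary-polytope}: the product has dimension \(\sum_i(|A_i|-3)\), and \(F_{\mathcal S}\) has the same dimension by counting degrees of freedom of \(\omega\) modulo the functions that are affine on each cell. Alternatively, linear independence of the directions follows because each \(\Sigma(A_i)\) moves only coordinates in \(A_i\), modulo the affine relations. Either argument yields the canonical isomorphism \(F_{\mathcal S}\cong\prod_i\Sigma(A_i)\).

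\textbf{Step 5: faces.} Faces correspond because faces of a product are products of faces. By Step 1 applied to coarser subdivisions, products of faces of the \(\Sigma(A_i)\) correspond exactly to regular subdivisions refining \(\mathcal S\), each obtained by independently refining every cell \((Q_i,A_i)\).
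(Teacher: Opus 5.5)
\textbf{There is a genuine gap: Steps~1 and~4 assume exactly what needs a genericity hypothesis, and the claim fails when only ``no three points collinear'' is assumed.}

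Steps~2 and~3 are sound and prove something true in general. Every triangulation $\mathcal T$ refining $\mathcal S$, regular or not, minimizes $\langle\omega,\phi_{\mathcal T}\rangle$. In the plane, with no three points of $A$ collinear, any tuple $(\mathcal T_i)$ of triangulations of the cells glues to such a $\mathcal T$. Hence $F_{\mathcal S}=\sum_i\iota_i\Sigma(A_i)$ as a Minkowski sum in $\mathbb R^A$.

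The gap is the passage from this Minkowski sum to a product, which you place in the converse half of Step~1 and in Step~4. Each $\Sigma(A_i)$ is full-dimensional in an affine space whose direction is $\mathrm{Dep}(A_i)$, the space of affine dependences of $A_i$. So the sum is a product exactly when $\bigoplus_i\mathrm{Dep}(A_i)\to\mathbb R^A$ is injective. Dually, the map $\mathbb R^A\to\bigoplus_i\mathbb R^{A_i}/\mathrm{Aff}(A_i)$, $\eta\mapsto(\eta|_{A_i})_i$, must be surjective. That is precisely your claim that $\eta$ can be chosen with $\eta|_{A_i}\equiv\omega_i$ modulo affine functions for all $i$ simultaneously, and nothing you write proves it. The cells share vertices, so the affine corrections on neighbouring cells are coupled. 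Having no marked points inside common edges only guarantees that the $\mathcal T_i$ glue to \emph{some} triangulation, not to a regular one. Likewise, your count in Step~4 actually gives $\dim F_{\mathcal S}=|A|-\dim\{\eta:\eta|_{A_i}\text{ affine for all } i\}$, and this space of piecewise-affine heights can exceed its expected dimension.

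Here is a counterexample with no three points collinear. Take $p_1=(0,0)$, $p_2=(4,0)$, $p_3=(0,4)$, $q_1=(1,1)$, $q_2=(2,1)$, $q_3=(1,2)$, so that $q_i=c+\tfrac14(p_i-c)$ with $c$ the centroid. Let $\mathcal S$ be the subdivision into the triangle $q_1q_2q_3$ and the three trapezoids $p_ip_{i+1}q_{i+1}q_i$, with indices mod~$3$. It is regular: lift by $\max(h,\tfrac14)$, where $h$ is the gauge of the outer triangle centred at $c$. The segments $\Sigma(A_i)$ of the trapezoids have directions
\[
\lambda_i=e_{p_i}-e_{p_{i+1}}-4e_{q_i}+4e_{q_{i+1}},
\]
and these sum to zero. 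Hence $F_{\mathcal S}$ is a hexagon, a facet of the $3$-dimensional $\Sigma(A)$, not the $3$-cube $\prod_i\Sigma(A_i)$. The two ``cyclic'' gluings of trapezoid triangulations are the well-known non-regular triangulations of this configuration, so no $\eta$ as in Step~1 exists for them. Since $\mathcal S$ is coarse, even the case used to define the differential $d$ is affected.

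The missing ingredient is a real genericity hypothesis, such as the exceptional general position mentioned in Remark~\ref{rem:factorization-KSS}; the paper itself gives no proof and only defers to that reference. You would need it in the form $\dim F_{\mathcal S}=\sum_i(|A_i|-3)$, equivalently the surjectivity above, for every regular $\mathcal S$. With that assumption, your Steps~1--5 then go through.
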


Equivalently, a regular refinement of $\mathcal S$ is uniquely determined by the
choice, for each $i\in I$, of a regular subdivision of $(Q_i,A_i)$. This implies
that the face poset of $F_{\mathcal S}$ is the product of the face posets of the
secondary polytopes $\Sigma(A_i)$.

\begin{remark}
\label{rem:factorization-KSS}
The factorization property in Proposition~\ref{prop:factorization} is one of
the basic structural properties of secondary polytopes. In the form used here,
it says that once a regular subdivision \(\mathcal S\) is fixed, any further
refinement of \(\mathcal S\) is obtained independently by refining each of its
maximal cells. Thus the corresponding face of the secondary polytope splits as
the product of the secondary polytopes of the cells.

A detailed proof of this factorization statement, in the language used in the
algebra of the infrared, can be found in \cite{KSS}. There the point
configuration is often assumed to be in \emph{exceptional general position},
which ensures that the relevant secondary polytopes and their face
stratifications behave generically. Under this genericity assumption, the
faces corresponding to regular subdivisions have the expected dimensions, and
the above product decomposition is compatible with the combinatorics of
refinements. 
\end{remark}

This factorization property is the key structural input in the construction of
the $L_\infty$ algebra associated to $(Q,A)$: it allows one to express boundary
components of faces of $\Sigma(A)$ in terms of products of lower--dimensional
secondary polytopes, leading to higher multilinear operations.

\subsection{The cellular chain complex of a secondary polytope}
In this paper, we work over a fixed field $\Bbbk$ of
characteristic $0$. All graded vector spaces are $\Bbb Z$--graded, and we use
cohomological grading conventions (differentials have degree $+1$).
For a graded vector space $V$, the shift $V[1]$ is defined by $V[1]^i := V^{i+1}.$
If $v\in V$ is homogeneous, its degree is denoted $|v|$.

Let $P$ be a convex polytope. We define its cellular chain
complex with coefficients in orientation lines by
\[
C_\bullet(P)\;:=\;\bigoplus_{\varnothing\neq F\subset P}\mathrm{or}(F)[\dim F],
\]
where the sum ranges over all (nonempty) faces $F$ of $P$, $\mathrm{or}(F)$ denotes the
one-dimensional orientation line of $F$, and $[\dim F]$ denotes the degree shift.
The differential is the usual cellular boundary map
\[
d:\mathrm{or}(F)\longrightarrow \bigoplus_{F'\prec F}\mathrm{or}(F')
\]
summed over codimension--one faces $F'\prec F$ with the induced orientation signs.
One has $d^2=0$.

Let $(Q,A)$ be a fixed marked polytope and let
$(Q',A')\subset (Q,A)$ range over all marked subpolytopes.
Each such $(Q',A')$ has its own secondary polytope $\Sigma(A')$.

For every $(Q',A')$ we consider the {\em top cell}
of $\Sigma(A')$, i.e.\ the oriented fundamental class
\[
v_{A'} \;\in\; \operatorname{or}(\Sigma(A'))[\dim \Sigma(A')].
\]

We now define the graded vector space
\begin{equation}
\label{eq:Vdef}
V \;:=\;
\bigoplus_{(Q',A')\subset(Q,A)}
   V_{A'},
\qquad
V_{A'} := \operatorname{or}(\Sigma(A'))[\dim\Sigma(A')].
\tag{2.1}
\end{equation}

Thus $V$ has a {\em single generator $v_{A'}$ per subconfiguration}
$A'\subset A$.
Only top-dimensional faces of the various $\Sigma(A')$ appear in $V$.

We form the free graded commutative algebra
\[
S^\bullet(V)
   \;=\;
   \bigoplus_{n\ge 0} \operatorname{Sym}^n(V).
\]
If $x,y\in V$ are homogeneous, their symmetric product satisfies
\[
x\odot y
   =
   (-1)^{|x||y|}
      \,y\odot x.
\]

Every element of $S^\bullet(V)$ is therefore a finite linear
combination of monomials of the form $v_{A'_1}\odot \cdots \odot v_{A'_n}$.

\medskip

Fix $(Q',A')$.
Inside the chain complex $C_\bullet(\Sigma(A'))$ the fundamental class
$v_{A'}$ has boundary
\[
\partial v_{A'}
   \;=\;
   \sum_{\substack{F\subset \Sigma(A')\\ \mathrm{codim} F=1}}
      \varepsilon(F)\,[F].
\]

By the theory of secondary polytopes,
each codimension--$1$ face $F$ associated to a coarse regular
subdivision $P'' = \{(Q''_\nu,A''_\nu)\}$ factorizes as a product of smaller secondary polytopes:
\[
F \;\cong\;
\prod_\nu\Sigma(A''_\nu).
\]

Thus, we define 
the differential on a generator by
\begin{equation}
\label{eq:dgenerator}
d(v_{A'})
   \;:=\;
   \sum_{P''}
      \varepsilon(P'')\,
      \bigodot_{\nu} v_{A''_\nu},
\tag{2.2}
\end{equation}
where the sum runs over all coarse regular subdivisions of
$(Q',A')$.
This defines a linear map
\[
d : V \longrightarrow S^\bullet(V),
\]
of degree $+1$.

We extend $d$ to all of $S^\bullet(V)$ as a graded derivation:
\[
d(x\odot y)
   \;:=\;
   d(x)\odot y
   \;+\;
   (-1)^{|x|}\, x\odot d(y),
\qquad
x,y\in S^\bullet(V).
\]

This uniquely determines a degree-$+1$ endomorphism
\[
d : S^\bullet(V)\to S^\bullet(V).
\]

Let us compute $d^2(v_{A'})$.
By definition \eqref{eq:dgenerator} and the Leibniz rule,
$d^2(v_{A'})$ is a signed sum over {\em two-step subdivisions}
\[
(Q',A')
   \;\longrightarrow\;
   P'' = \{A''_\nu\}
   \;\longrightarrow\;
   P''' = \{A'''_\mu\},
\]
i.e.\ refinements of the coarse subdivision $P''$ in exactly one
component.

Geometrically this corresponds to taking the boundary of the
codimension--$1$ face $F_{P''}\subset \Sigma(A')$:
its boundary consists of codimension--$2$ faces.
Every codimension--$2$ face $G\subset\Sigma(A')$ arises
{\em twice}, with opposite signs:
once via $P''$ then a refinement of some $(Q''_\nu,A''_\nu)$,
and once via a distinct intermediate subdivision.
This is exactly the combinatorial identity
$\partial^2=0$ in the cellular chain complex
$C_\bullet(\Sigma(A'))$.

Because factorization identifies each $G$ with a product
\[
G \;\cong\;
\Sigma(A'''_1)\times \cdots \times \Sigma(A'''_k),
\]
the two algebraic contributions to
$v_{A'''_1}\odot\cdots\odot v_{A'''_k}$
coming from the two subdivision paths
have opposite signs.
Thus they cancel.

Therefore
\[
d^2(v_{A'}) = 0
\qquad\text{for all }A'.
\]
Since $d$ is a derivation, it follows that
\[
d^2 = 0 \quad\text{on all of } S^\bullet(V).
\]

\medskip
In summary, the pair $(S^\bullet(V),d)$ is a graded commutative
dg-algebra whose differential is geometrically induced from the
cellular boundary maps of all secondary polytopes $\Sigma(A')$.

\section{$A_\infty$-algebras, $L_\infty$-algebras and Koszul duality}
\label{sec:ainf-linf-koszul}

This section collects the algebraic background used throughout the paper.
We use standard conventions for \(A_\infty\)- and \(L_\infty\)-algebras in
terms of square-zero coderivations on the bar coalgebra.  General background
on \(A_\infty\)-algebras can be found in \cite{KellerAInfinity};
background on \(L_\infty\)-algebras and their relation to deformation theory
can be found in \cite{LadaStasheff, LodayVallette}; and the coderivation
formalism for deformation theory is discussed in
\cite{FialowskiPenkava, KoSo1}.
General background on operads and homotopy algebra may also be found in
\cite{MSS}.

\subsection{\texorpdfstring{\(A_\infty\)-algebras}{A-infinity Algebras}}
\label{sec:ainf-algebras}

In this section we recall the basic definitions of \(A_\infty\)-algebras and
\(A_\infty\)-morphisms.  The notion of an \(A_\infty\)-algebra is a homotopy
invariant weakening of the notion of a differential graded algebra.  Instead
of requiring the product to be strictly associative on the chain level, one
allows associativity to hold up to a coherent system of higher homotopies.

Let \(V\) be a graded vector space over a field \(\Bbbk\).

\begin{definition}\label{def:ainf-algebra}
An \emph{\(A_\infty\)-algebra structure} on \(V\) is given by a collection of
multilinear maps
\[
m_n:V^{\otimes n}\longrightarrow V[2-n],
\qquad n\geq 1,
\]
satisfying the Stasheff identities.  Explicitly, for each \(n\geq 1\), one has
\begin{equation}\label{eq:ainf-stasheff}
\sum_{\substack{r+s+t=n\\ s\geq 1}}
(-1)^{r+st}\,
m_{r+1+t}
\bigl(
\id^{\otimes r}\otimes m_s\otimes \id^{\otimes t}
\bigr)
=0.
\end{equation}
Here \(m_n\) has cohomological degree \(2-n\), and the signs are determined by
the Koszul sign rule.
\end{definition}

The first few Stasheff identities explain the meaning of the definition.
For \(n=1\), equation \eqref{eq:ainf-stasheff} gives
\[
m_1^2=0.
\]
Thus \(m_1\) is a differential on \(V\).  For \(n=2\), the identity says that
\(m_1\) is compatible with the binary product \(m_2\).  In other words, \(m_2\)
is a chain map up to the usual Koszul signs.  For \(n=3\), the identity says
that \(m_2\) is associative up to a homotopy controlled by \(m_3\).  The higher
maps
\[
m_n,\qquad n\geq 3,
\]
then encode higher homotopies among these associativity relations.

Thus an \(A_\infty\)-algebra may be viewed as an associative algebra up to a
coherent system of higher homotopies.  In particular, the binary operation
\(m_2\) induces an associative product on the cohomology
\[
H^\bullet(V,m_1).
\]

\begin{example}\label{ex:dg-algebra-as-ainf}
Every differential graded algebra is an \(A_\infty\)-algebra.  Indeed, let
\((V,d,\mu)\) be a differential graded algebra.  Then one sets
\[
m_1=d,\qquad m_2=\mu,\qquad m_n=0\quad\text{for }n\geq 3.
\]
The Stasheff identities reduce to the conditions that \(d^2=0\), that \(d\)
satisfies the Leibniz rule with respect to \(\mu\), and that \(\mu\) is
strictly associative.
\end{example}

The correct notion of morphism between \(A_\infty\)-algebras is also
homotopy-theoretic.  An \(A_\infty\)-morphism is not simply a chain map
compatible with products.  Instead, it consists of a collection of maps whose
higher components encode the failure of strict compatibility with the
\(A_\infty\)-operations.

Let \((V,\{m_n^V\})\) and \((W,\{m_n^W\})\) be \(A_\infty\)-algebras.

\begin{definition}\label{def:ainf-morphism}
An \emph{\(A_\infty\)-morphism}
\[
f:V\longrightarrow W
\]
is a collection of multilinear maps
\[
f_n:V^{\otimes n}\longrightarrow W[1-n],
\qquad n\geq 1,
\]
satisfying the identities
\begin{equation}\label{eq:ainf-morphism}
\begin{aligned}
&
\sum_{\substack{r+s+t=n\\ s\geq 1}}
(-1)^{r+st}
f_{r+1+t}
\bigl(
\id^{\otimes r}\otimes m_s^V\otimes \id^{\otimes t}
\bigr)
\\
&\qquad =
\sum_{\substack{k\geq 1\\ i_1+\cdots+i_k=n}}
(-1)^{\epsilon}
m_k^W
\bigl(
f_{i_1}\otimes f_{i_2}\otimes\cdots\otimes f_{i_k}
\bigr),
\end{aligned}
\end{equation}
where
\[
\epsilon
=
\sum_{j=1}^{k}(k-j)(i_j-1).
\]
\end{definition}

The first component
\[
f_1:V\longrightarrow W
\]
has degree \(0\).  The identity \eqref{eq:ainf-morphism} for \(n=1\) says that
\(f_1\) is a chain map:
\[
f_1m_1^V=m_1^Wf_1.
\]
For \(n=2\), the identity says that \(f_1\) preserves the product \(m_2\) up
to a homotopy controlled by \(f_2\).  The higher maps \(f_n\), for \(n\geq 3\),
give the higher coherence data.

\begin{definition}\label{def:strict-ainf-morphism}
An \(A_\infty\)-morphism \(f:V\to W\) is called \emph{strict} if
\[
f_n=0\qquad\text{for all }n\geq 2.
\]
In this case \(f_1\) is a chain map which is strictly compatible with all
\(A_\infty\)-operations.
\end{definition}

\begin{definition}\label{def:ainf-quasi-isomorphism}
An \(A_\infty\)-morphism \(f:V\to W\) is called an
\emph{\(A_\infty\)-quasi-isomorphism} if its first component
\[
f_1:(V,m_1^V)\longrightarrow (W,m_1^W)
\]
is a quasi-isomorphism of cochain complexes.
\end{definition}

Thus, for \(A_\infty\)-algebras, quasi-isomorphism is detected by the linear
term \(f_1\).  The higher components \(f_n\) are nevertheless essential: they
record the compatibility of \(f_1\) with the higher algebraic structures.

There is an equivalent and often more conceptual way to package the above
definitions.  After applying the standard suspension, the operations
\(\{m_n\}_{n\geq 1}\) can be assembled into a degree-one coderivation
\[
b:T^c(V[1])\longrightarrow T^c(V[1])
\]
on the reduced tensor coalgebra
\[
T^c(V[1])
=
\bigoplus_{n\geq 1} V[1]^{\otimes n}.
\]
The Stasheff identities are equivalent to the single equation
\[
b^2=0.
\]
In this language, an \(A_\infty\)-morphism \(f:V\to W\) is equivalently a
coalgebra morphism
\[
F:T^c(V[1])\longrightarrow T^c(W[1])
\]
compatible with the corresponding coderivations:
\[
F\circ b_V=b_W\circ F.
\]

This point of view is useful because it shows that \(A_\infty\)-algebras are
controlled by differential graded Lie algebras of coderivations.  In later
sections, we will use this philosophy to relate \(A_\infty\)-structures to
deformation complexes and to \(L_\infty\)-morphisms into derived derivation
algebras.

For further background on \(A_\infty\)-algebras and bar constructions, see
\cite{GJ90}. For a more systematic treatment of \(A_\infty\)-categories,
modules, and twisted complexes, see \cite{Lefevre}.

\subsection{\texorpdfstring{\(L_\infty\)-algebras}{L-infinity Algebras}}
\label{sec:linf-algebras}

In this section we recall the basic definitions of \(L_\infty\)-algebras and
\(L_\infty\)-morphisms.  An \(L_\infty\)-algebra is a homotopy-theoretic
generalization of a differential graded Lie algebra.  Instead of requiring the
Jacobi identity to hold strictly, one allows it to hold up to a coherent
system of higher homotopies.

Let \(\mathfrak g\) be a graded vector space over a field \(\Bbbk\).

\begin{definition}\label{def:linf-algebra}
An \emph{\(L_\infty\)-algebra structure} on \(\mathfrak g\) is a collection of
graded skew-symmetric multilinear maps
\[
\ell_n:\wedge^n \mathfrak g\longrightarrow \mathfrak g[2-n],
\qquad n\geq 1,
\]
satisfying the higher Jacobi identities.  Explicitly, for every \(n\geq 1\)
and homogeneous elements \(x_1,\dots,x_n\in\mathfrak g\), one has
\begin{equation}\label{eq:linf-jacobi}
\sum_{\substack{i+j=n+1\\ i,j\geq 1}}
\sum_{\sigma\in \operatorname{Sh}(i,n-i)}
\chi(\sigma;x)\,(-1)^{i(j-1)}
\ell_j
\Bigl(
\ell_i(x_{\sigma(1)},\dots,x_{\sigma(i)}),
x_{\sigma(i+1)},\dots,x_{\sigma(n)}
\Bigr)
=0.
\end{equation}
Here \(\operatorname{Sh}(i,n-i)\) denotes the set of \((i,n-i)\)-shuffles, and
\(\chi(\sigma;x)\) is the Koszul sign obtained by permuting the homogeneous
elements \(x_1,\dots,x_n\) according to \(\sigma\).
\end{definition}

The first few identities explain the meaning of the definition.  For \(n=1\),
equation \eqref{eq:linf-jacobi} gives
\[
\ell_1^2=0.
\]
Thus \(\ell_1\) is a differential on \(\mathfrak g\).  For \(n=2\), the
identity says that \(\ell_1\) is compatible with the bracket \(\ell_2\).  In
other words, \(\ell_1\) acts as a derivation of \(\ell_2\), up to the usual
Koszul signs.  For \(n=3\), the identity says that the bracket \(\ell_2\)
satisfies the graded Jacobi identity up to a homotopy controlled by
\(\ell_3\).  The higher operations
\[
\ell_n,\qquad n\geq 3,
\]
then encode higher coherence relations among these homotopies.

Thus an \(L_\infty\)-algebra may be viewed as a Lie algebra up to a coherent
system of higher homotopies.  The bracket \(\ell_2\) induces a graded Lie
bracket on the cohomology
\[
H^\bullet(\mathfrak g,\ell_1).
\]

\begin{example}\label{ex:dg-lie-as-linf}
Every differential graded Lie algebra is an \(L_\infty\)-algebra.  Let
\((\mathfrak g,d,[\,,\,])\) be a differential graded Lie algebra.  Then one
sets
\[
\ell_1=d,\qquad
\ell_2=[\,,\,],\qquad
\ell_n=0\quad\text{for }n\geq 3.
\]
The \(L_\infty\)-identities reduce to the conditions that \(d^2=0\), that
\(d\) is compatible with the bracket, and that the bracket satisfies the
graded Jacobi identity.
\end{example}

One of the main reasons \(L_\infty\)-algebras appear in deformation theory is
that they have a natural Maurer--Cartan equation.  This equation generalizes
the Maurer--Cartan equation in a differential graded Lie algebra.

\begin{definition}\label{def:mc-element}
Let \((\mathfrak g,\{\ell_n\}_{n\geq 1})\) be an \(L_\infty\)-algebra.  A
degree \(1\) element
\[
\alpha\in \mathfrak g^1
\]
is called a \emph{Maurer--Cartan element} if it satisfies
\[
\sum_{n\geq 1}\frac{1}{n!}\,
\ell_n(\alpha,\dots,\alpha)=0.
\]
\end{definition}

For a differential graded Lie algebra, this equation becomes
\[
d\alpha+\frac{1}{2}[\alpha,\alpha]=0.
\]
In general, the higher brackets contribute higher-order correction terms.

In many applications, the sum in the Maurer--Cartan equation is infinite.
Therefore one usually assumes that \(\mathfrak g\) is nilpotent, filtered, or
completed, so that the above series is well-defined.  In the finite-dimensional
or nilpotent situations considered in many algebraic constructions, this
convergence issue is harmless.

Given a Maurer--Cartan element, one can twist the \(L_\infty\)-structure to
obtain a new \(L_\infty\)-algebra.

Let \(\alpha\in\mathfrak g^1\) be a Maurer--Cartan element.  The twisted
operations are defined by
\[
\ell_n^\alpha(x_1,\dots,x_n)
=
\sum_{k\geq 0}
\frac{1}{k!}
\ell_{n+k}
(\underbrace{\alpha,\dots,\alpha}_{k\text{ times}},
x_1,\dots,x_n).
\]
The Maurer--Cartan equation for \(\alpha\) ensures that the operations
\(\{\ell_n^\alpha\}_{n\geq 1}\) again satisfy the \(L_\infty\)-identities.

The twisted differential is
\[
\ell_1^\alpha(x)
=
\ell_1(x)+\ell_2(\alpha,x)
+\frac{1}{2!}\ell_3(\alpha,\alpha,x)+\cdots.
\]
This is the differential controlling deformations around the point
\(\alpha\).  Thus Maurer--Cartan elements may be viewed as deformation
parameters, and twisting describes the deformation complex at such a point.

The natural notion of morphism between \(L_\infty\)-algebras is also
homotopy-theoretic.  An \(L_\infty\)-morphism is not simply a linear map
preserving all brackets.  Instead, it consists of a collection of maps whose
higher components encode the failure of strict compatibility.

Let
\[
(\mathfrak g,\{\ell_n^{\mathfrak g}\})
\qquad\text{and}\qquad
(\mathfrak h,\{\ell_n^{\mathfrak h}\})
\]
be \(L_\infty\)-algebras.

\begin{definition}\label{def:linf-morphism}
An \emph{\(L_\infty\)-morphism}
\[
F:\mathfrak g\longrightarrow \mathfrak h
\]
is a collection of graded skew-symmetric multilinear maps
\[
F_n:\wedge^n\mathfrak g\longrightarrow \mathfrak h[1-n],
\qquad n\geq 1,
\]
satisfying the usual compatibility identities with the higher brackets on
\(\mathfrak g\) and \(\mathfrak h\).
\end{definition}

The first component
\[
F_1:\mathfrak g\longrightarrow \mathfrak h
\]
has degree \(0\).  The first \(L_\infty\)-morphism identity says that \(F_1\)
is a chain map:
\[
F_1\ell_1^{\mathfrak g}
=
\ell_1^{\mathfrak h}F_1.
\]
The next identity says that \(F_1\) preserves the bracket \(\ell_2\) up to a
homotopy controlled by \(F_2\).  The higher maps \(F_n\), for \(n\geq 3\), give
higher coherence data.

\begin{definition}\label{def:strict-linf-morphism}
An \(L_\infty\)-morphism \(F:\mathfrak g\to\mathfrak h\) is called
\emph{strict} if
\[
F_n=0\qquad\text{for all }n\geq 2.
\]
In this case \(F_1\) is strictly compatible with all \(L_\infty\)-operations.
\end{definition}

\begin{definition}\label{def:linf-quasi-isomorphism}
An \(L_\infty\)-morphism \(F:\mathfrak g\to\mathfrak h\) is called an
\emph{\(L_\infty\)-quasi-isomorphism} if its first component
\[
F_1:(\mathfrak g,\ell_1^{\mathfrak g})
\longrightarrow
(\mathfrak h,\ell_1^{\mathfrak h})
\]
is a quasi-isomorphism of cochain complexes.
\end{definition}

An \(L_\infty\)-morphism sends Maurer--Cartan elements to Maurer--Cartan
elements.  More precisely, if \(\alpha\in\mathfrak g^1\) is a Maurer--Cartan
element, then, under suitable nilpotence or convergence assumptions,
\[
F_*(\alpha)
=
\sum_{n\geq 1}\frac{1}{n!}F_n(\alpha,\dots,\alpha)
\]
is a Maurer--Cartan element of \(\mathfrak h\).  This property is one of the
main reasons \(L_\infty\)-morphisms are useful in deformation theory.

There is a compact way to package the definition of an \(L_\infty\)-algebra.
After applying the suspension, the higher brackets \(\ell_n\) can be assembled
into a degree-one coderivation
\[
Q:S^c(\mathfrak g[1])\longrightarrow S^c(\mathfrak g[1])
\]
on the cofree cocommutative coalgebra
\[
S^c(\mathfrak g[1])
=
\bigoplus_{n\geq 1} S^n(\mathfrak g[1]).
\]
The higher Jacobi identities are equivalent to the single equation
\[
Q^2=0.
\]
Thus an \(L_\infty\)-algebra can be described equivalently as a codifferential
on the cofree cocommutative coalgebra generated by \(\mathfrak g[1]\).

In this language, an \(L_\infty\)-morphism
\[
F:\mathfrak g\to\mathfrak h
\]
is a coalgebra morphism
\[
S^c(\mathfrak g[1])\longrightarrow S^c(\mathfrak h[1])
\]
compatible with the corresponding codifferentials.  This coalgebra
description is often the cleanest way to state and prove functorial properties
of \(L_\infty\)-algebras, since all higher compatibility identities are
contained in one equation.

In later sections, we will use \(L_\infty\)-algebras as deformation-theoretic
objects.  In particular, the \(L_\infty\)-algebras constructed from secondary
polytopes will act on \(A_\infty\)-algebras through \(L_\infty\)-morphisms to
derived derivation complexes.

\subsection{Derived derivation spaces, Hochschild complexes, and Koszul duality}\label{sec:derived-derivations}

We briefly recall the deformation-theoretic background used below, following
\cite[Section~7]{KKS} and the general framework of \cite{KoSo1,KoSo2}.

Let \(\mathcal P\) be a dg operad over a field \(\Bbbk\), and let \(A\) be a
\(\mathcal P\)-algebra in the category of dg vector spaces over \(\Bbbk\). We
denote by
\[
\alpha:\mathcal P\longrightarrow \operatorname{End}(A)
\]
the corresponding morphism of dg operads.

\begin{definition}
A homogeneous linear map \(\theta:A\to A\) is called a
\(\mathcal P\)-derivation if it is compatible with all operations coming from
\(\mathcal P\). Equivalently, for every homogeneous \(p\in \mathcal P(n)\), one
has
\[
[\theta,\alpha(p)]=0,
\]
where the bracket is the operadic commutator in the endomorphism operad. We
write \(\operatorname{Der}_{\mathcal P}(A)\) for the graded vector space of
\(\mathcal P\)-derivations of all degrees.
\end{definition}

\begin{lemma}
The graded vector space \(\operatorname{Der}_{\mathcal P}(A)\) is a dg Lie
algebra. The bracket is the graded commutator
\[
[\theta,\eta]
=
\theta\circ \eta
-
(-1)^{|\theta||\eta|}\eta\circ \theta,
\]
and the differential is
\[
d_{\operatorname{Der}}(\theta)
=
d_A\circ \theta
-
(-1)^{|\theta|}\theta\circ d_A .
\]
\end{lemma}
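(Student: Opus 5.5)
We prove the lemma in three steps. First, $\operatorname{Der}_{\mathcal P}(A)$ is a graded Lie subalgebra of $\operatorname{End}(A)$ under the graded commutator. Second, $d_{\operatorname{Der}}$ is the inner derivation $[d_A,-]$ restricted to this subalgebra. Third, the dg Lie axioms then follow from standard identities in the endomorphism algebra.

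\emph{The ambient dg Lie algebra.} The graded vector space $\underline{\operatorname{End}}(A)=\bigoplus_k \operatorname{Hom}^k(A,A)$ is a graded associative algebra under composition. Its graded commutator
\[
[\theta,\eta]=\theta\circ\eta-(-1)^{|\theta||\eta|}\eta\circ\theta
\]
is therefore a graded Lie bracket; graded skew-symmetry and the graded Jacobi identity are formal consequences of associativity. Since $d_A$ has degree $1$ and $d_A^2=0$, we have
\[
[d_A,\theta]=d_A\circ\theta-(-1)^{|\theta|}\theta\circ d_A=d_{\operatorname{Der}}(\theta),
\]
which is the formula in the statement. The operator $\operatorname{ad}_{d_A}$ is a degree-one derivation of the bracket by Jacobi. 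Moreover,
\[
\operatorname{ad}_{d_A}^2=\tfrac12\operatorname{ad}_{[d_A,d_A]}=\operatorname{ad}_{d_A^2}=0 .
\]
So $(\underline{\operatorname{End}}(A),[d_A,-],[\,,\,])$ is a dg Lie algebra, and it suffices to show that $\operatorname{Der}_{\mathcal P}(A)$ is closed under both the bracket and $d_{\operatorname{Der}}$.

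\emph{Closure.} Write the operadic commutator of $\theta$ with $f=\alpha(p)\in\operatorname{End}(A)(n)$ as
\[
[\theta,f]=\theta\circ f-(-1)^{|\theta||f|}\sum_{i}f\circ_i\theta .
\]
This is the action of $\theta$ on $\operatorname{End}(A)(n)=\operatorname{Hom}(A^{\otimes n},A)$ induced by letting $\theta$ act on the target and, with Koszul signs, on the tensor power of the source. Call this action $D_\theta$. The assignment $\theta\mapsto D_\theta$ is a morphism of graded Lie algebras, i.e.
\[
D_{[\theta,\eta]}=[D_\theta,D_\eta].
\]
This holds because the derivation extension of $\theta$ to $A^{\otimes n}$ respects commutators, and the induced action on $\operatorname{Hom}$ spaces does too. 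Hence if $D_\theta f=D_\eta f=0$ for all $f$ in the image of $\alpha$, then $D_{[\theta,\eta]}f=0$, which gives closure under the bracket.

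For the differential, we use that $\alpha$ is a morphism of dg operads. The differential on $\operatorname{End}(A)(n)$ is exactly $D_{d_A}$, so
\[
D_{d_A}(\alpha(p))=\alpha(d_{\mathcal P}p)
\]
lies in the image of $\alpha$. Let $\theta$ be a derivation, so $D_\theta$ kills the image of $\alpha$. Then
\[
D_{[d_A,\theta]}\alpha(p)=D_{d_A}D_\theta\alpha(p)\pm D_\theta D_{d_A}\alpha(p)=0\pm D_\theta\,\alpha(d_{\mathcal P}p)=0 .
\]
Hence $d_{\operatorname{Der}}\theta\in\operatorname{Der}_{\mathcal P}(A)$.

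\emph{Main obstacle.} The only real work is the sign bookkeeping in the identity $D_{[\theta,\eta]}=[D_\theta,D_\eta]$, and in identifying the operadic differential with $D_{d_A}$. We handle this uniformly by viewing $D_\theta$ as the graded commutator with $\theta$ inside the graded associative algebra $\operatorname{End}(\bigoplus_n A^{\otimes n})$, where $\theta$ acts on $A^{\otimes n}$ by the Koszul-signed derivation extension. Once this is set up, both identities reduce to the associativity argument already used for $\underline{\operatorname{End}}(A)$.
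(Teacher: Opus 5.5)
Your proposal is correct and follows essentially the same route as the paper: closure under the bracket via the Jacobi identity for the operadic commutator, closure under the differential via the Leibniz rule together with $d_{\operatorname{End}}\alpha(p)=\alpha(d_{\mathcal P}p)$, and the dg Lie axioms inherited from the endomorphism algebra. Packaging this through the Lie morphism $\theta\mapsto D_\theta$, and checking $\operatorname{ad}_{d_A}^2=0$, simply makes explicit what the paper invokes as the graded Jacobi identity in the endomorphism operad.
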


\begin{proof}
We first check that \(\operatorname{Der}_{\mathcal P}(A)\) is closed under the
graded commutator. Let \(\theta,\eta\in \operatorname{Der}_{\mathcal P}(A)\).
Then for every \(p\in \mathcal P\), we have
\[
[\theta,\alpha(p)]=0,
\qquad
[\eta,\alpha(p)]=0.
\]
By the graded Jacobi identity in the endomorphism operad,
\[
[[\theta,\eta],\alpha(p)]
=
[\theta,[\eta,\alpha(p)]]
-
(-1)^{|\theta||\eta|}
[\eta,[\theta,\alpha(p)]]
=
0.
\]
Hence \([\theta,\eta]\) is again a \(\mathcal P\)-derivation.

Next we check that the differential preserves derivations. Since
\(\alpha:\mathcal P\to \operatorname{End}_A\) is a morphism of dg operads, we
have
\[
d_{\operatorname{End}}(\alpha(p))=\alpha(d_{\mathcal P}p).
\]
If \(\theta\in \operatorname{Der}_{\mathcal P}(A)\), then
\[
[\theta,\alpha(p)]=0
\]
for all \(p\in \mathcal P\). Applying the differential in the endomorphism
operad gives
\[
0
=
d_{\operatorname{End}}[\theta,\alpha(p)]
=
[d_{\operatorname{End}}\theta,\alpha(p)]
+
(-1)^{|\theta|}[\theta,d_{\operatorname{End}}\alpha(p)].
\]
Using \(d_{\operatorname{End}}\alpha(p)=\alpha(d_{\mathcal P}p)\), the second
term vanishes because \(\theta\) is a \(\mathcal P\)-derivation. Therefore
\[
[d_{\operatorname{End}}\theta,\alpha(p)]=0.
\]
Thus \(d_{\operatorname{Der}}\theta=d_{\operatorname{End}}\theta\) is again a
\(\mathcal P\)-derivation. Hence \(\operatorname{Der}_{\mathcal P}(A)\) is
closed under both the graded commutator and the differential.

The graded commutator satisfies the graded antisymmetry and graded Jacobi
identity because it is the commutator bracket in the endomorphism dg algebra.
Therefore \(\operatorname{Der}_{\mathcal P}(A)\) is a dg Lie algebra.
\end{proof}

The ordinary derivation complex is not homotopy invariant in general: a
quasi-isomorphism of \(\mathcal P\)-algebras \(A\to A'\) need not induce a
quasi-isomorphism
\[
\operatorname{Der}_{\mathcal P}(A)
\longrightarrow
\operatorname{Der}_{\mathcal P}(A').
\]
Thus \(\operatorname{Der}_{\mathcal P}(A)\) is not, by itself, the correct
object for describing the deformation theory of \(A\) up to homotopy.  In
order to obtain a homotopy-invariant deformation complex, one derives the
assignment
\[
A\longmapsto \operatorname{Der}_{\mathcal P}(A).
\]
Concretely, one chooses a cofibrant replacement
\[
\widetilde A \longrightarrow A
\]
in the category of \(\mathcal P\)-algebras and defines
\[
R\operatorname{Der}_{\mathcal P}(A)
:=
\operatorname{Der}_{\mathcal P}(\widetilde A).
\]
Here \(\widetilde A\to A\) is a quasi-isomorphism of
\(\mathcal P\)-algebras, with \(\widetilde A\) chosen sufficiently free so that
derivations out of \(\widetilde A\) detect deformations of \(A\) in a
homotopy-invariant way.  Different choices of cofibrant replacement give
quasi-isomorphic dg Lie algebras, so the quasi-isomorphism type of
\(R\operatorname{Der}_{\mathcal P}(A)\) depends only on \(A\).

We call \(R\operatorname{Der}_{\mathcal P}(A)\) the derived derivation complex
of \(A\).  Since
\[
R\operatorname{Der}_{\mathcal P}(A)
=
\operatorname{Der}_{\mathcal P}(\widetilde A)
\]
is a dg Lie algebra, it can also be regarded as an \(L_\infty\)-algebra with
only two nonzero structure maps: the unary bracket is the differential
\(d_{\operatorname{Der}}\), and the binary bracket is the graded commutator of
derivations.  This is the sense in which the derived derivation complex carries
a natural \(L_\infty\)-structure.

In characteristic zero, this dg Lie algebra governs the formal deformation
theory of \(A\) as a \(\mathcal P\)-algebra.  More precisely, for a local
Artin dg algebra \(B\) with maximal ideal \(\mathfrak m_B\), deformations of
the \(\mathcal P\)-algebra structure on \(A\) over \(B\) are described by
Maurer--Cartan elements of
\[
R\operatorname{Der}_{\mathcal P}(A)\otimes \mathfrak m_B.
\]
If we write \(L=R\operatorname{Der}_{\mathcal P}(A)\), then such an element
\(\gamma\in L^1\otimes \mathfrak m_B\) satisfies the Maurer--Cartan equation
\[
d_L\gamma+\frac{1}{2}[\gamma,\gamma]=0.
\]
Gauge equivalence of Maurer--Cartan elements corresponds to equivalence of
deformations.  Thus the formal moduli problem of deformations of \(A\) as a
\(\mathcal P\)-algebra is encoded by the dg Lie algebra
\(R\operatorname{Der}_{\mathcal P}(A)\), or equivalently by its associated
\(L_\infty\)-algebra.

\medskip
In the case where \(\mathcal P=\operatorname{Ass}\) and \(A\) is an
associative algebra, the derived derivation complex is identified with the
shifted truncated Hochschild cochain complex
\[
C^{\geq 1}(A,A)[1]
=
\left\{
\operatorname{Hom}_k(A,A)
\xrightarrow{\delta_0}
\operatorname{Hom}_k(A^{\otimes 2},A)
\xrightarrow{\delta_1}
\operatorname{Hom}_k(A^{\otimes 3},A)
\longrightarrow \cdots
\right\}.
\]
The dg Lie bracket is the shifted Gerstenhaber bracket, equivalently the
bracket induced by the brace operations on Hochschild cochains.

Similarly, if \(\mathcal P=\operatorname{Lie}\) and \(L\) is a Lie algebra,
then the derived derivation complex is identified with the shifted truncated
Chevalley--Eilenberg cochain complex with coefficients in the adjoint
representation:
\[
C^{\geq 1}_{\operatorname{Lie}}(L,L)[1]
=
\left\{
L^*\otimes L
\longrightarrow
\Lambda^2 L^*\otimes L
\longrightarrow
\Lambda^3 L^*\otimes L
\longrightarrow \cdots
\right\}.
\]
This complex controls deformations of the Lie bracket on \(L\).

\medskip

The operadic description becomes especially transparent when \(\mathcal P\) is a quadratic Koszul operad. Let \(\mathcal P^!\) denote its Koszul dual operad. Then a weak \(\mathcal P\)-algebra structure, equivalently a \(\mathcal P_\infty\)-structure, on a graded vector space \(A\) can be encoded by a square-zero derivation $d^2=0$ on the free \(\mathcal P^!\)-algebra
$F_{\mathcal P^!}(A^*[-1])$, see \cite{GK94}.

This is the form of Koszul duality used in \cite{KKS}. Two basic cases are particularly important:
\begin{itemize}
    \item an \(L_\infty\)-structure on a graded vector space \(L\) is equivalent to a square-zero derivation on the completed symmetric algebra $S^\bullet_+(L^*[-1])$;
    \item an \(A_\infty\)-structure on a graded vector space \(R\) is equivalent to a square-zero derivation on the completed tensor algebra $T^\bullet_+(R^*[-1])$.
    
\end{itemize}

For the applications in this paper, the most important general principle is the following consequence of this formalism, and we give a detailed proof.

\begin{proposition}[cf.\ \cite{KKS}]\label{koszul-dual}
The following two types of data are equivalent.

\begin{enumerate}
    \item[(i)] A datum consisting of:
    \begin{enumerate}
        \item an \(A_\infty\)-algebra structure on \(R\);
        \item an \(L_\infty\)-algebra structure on \(L\);
        \item an \(L_\infty\)-morphism
        \[
            \alpha\colon L \longrightarrow R\mathrm{Der}(R)
            \simeq C^{\ge 1}(R,R)[1].
        \]
    \end{enumerate}

    \item[(ii)] An algebra differential \(d\) on the graded algebra
    \[
        S^\bullet(L^*[-1])\otimes T^\bullet(R^*[-1])
    \]
    preserving the ideals
    \[
        S^\bullet_+(L^*[-1])\otimes 1
        \qquad \text{and} \qquad
        1\otimes T^\bullet_+(R^*[-1]).
    \]
\end{enumerate}
\end{proposition}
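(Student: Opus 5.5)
The approach is to pass through the Koszul-dual (coalgebra) description recalled above and to decompose an algebra differential on the tensor product by polynomial type. Write $\mathcal S:=S^\bullet(L^*[-1])$ and $\mathcal T:=T^\bullet(R^*[-1])$; both are completed free algebras (graded-commutative and associative respectively). The graded algebra $\mathcal S\otimes\mathcal T$ is then the coproduct of $\mathcal S$ with $\mathcal T$ in the category of algebras in which $\mathcal S$ is central. A degree $+1$ derivation $d$ of it is determined by its values on generators, i.e.\ by two maps
\[
d|_{L^*[-1]}\colon L^*[-1]\to \mathcal S\otimes\mathcal T,
\qquad
d|_{R^*[-1]}\colon R^*[-1]\to \mathcal S\otimes\mathcal T,
\]
and, conversely, any such pair extends uniquely to a derivation. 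This freeness statement is the first step, and it is formal.

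The second step is to translate the two ideal-preservation conditions. Preserving $1\otimes\mathcal T_+$ forces $d(L^*[-1])\subset \mathcal S_+\otimes 1$ modulo components that would land in $\mathcal T_+$ paired with elements outside the ideal. More precisely, composing with the augmentation $\mathcal T\to\Bbbk$ yields a derivation of $\mathcal S$, and the condition says that the $\mathcal T_+$-components of $d(L^*[-1])$ vanish. Hence $d|_{L^*[-1]}$ takes values in $\mathcal S_+$ (the augmentation ideal, since the relevant algebras are completed and non-unital on generators). By the first bullet of the Koszul-duality recollection, the condition $d^2=0$ restricted to $\mathcal S$ is exactly an $L_\infty$-structure on $L$; this gives (b). Similarly, preserving $\mathcal S_+\otimes 1$ means that, after setting $\mathcal S_+=0$, $d|_{R^*[-1]}$ takes values in $\mathcal T_+$, and $d^2=0$ modulo $\mathcal S_+$ is an $A_\infty$-structure on $R$ by the second bullet; this gives (a). What remains of $d$ on $R^*[-1]$ is the component
\[
d'\colon R^*[-1]\to \mathcal S_+\otimes\mathcal T .
\]
Dualizing, a component of $d'$ of type $\mathrm{Sym}^k\otimes T^n$ is a map $\alpha_k\colon \wedge^k L\to \operatorname{Hom}(R^{\otimes n},R)$ with the appropriate shift. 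Summing over $n\geq 0$ gives a map into $C^{\ge 0}$; the ideal condition on $1\otimes\mathcal T_+$ applied to $R^*$ is exactly what kills the $n=0$ term, landing in $C^{\geq 1}(R,R)[1]$. The convention to fix here is whether $T^\bullet$ includes $T^0$; I would fix it so that this truncation matches.

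The third step is to expand $d^2=0$ on the generators $R^*[-1]$ by $\mathcal S$-polynomial degree. Degree $0$ gives the Stasheff identities. Degree $k\ge1$ gives
\[
\delta_{\mathrm{Hoch}}\alpha_k
+\sum\pm[\alpha_i,\alpha_j]_{\mathrm{Gerst}}
=\sum\pm \alpha_{k-i+1}\circ \ell_i ,
\]
where the first term arises from $d_{\mathcal T}$ acting on $d'$ and the Gerstenhaber term from $d'$ composed with $d'$ on tensor factors (because $\mathcal S$ is commutative and central, the two orders symmetrize into the bracket). The right-hand side comes from $d_{\mathcal S}$ acting on the $\mathcal S$-coefficients. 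This is precisely the $L_\infty$-morphism equation into the dg Lie algebra $C^{\ge1}(R,R)[1]$, whose differential is the Hochschild differential twisted by $m$ and whose only bracket is Gerstenhaber. On $L^*[-1]$, $d^2=0$ just reproduces the $L_\infty$ relations. The construction is invertible step by step, which gives the equivalence.

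The main obstacle is the sign and shift bookkeeping in the third step. One must verify that the quadratic $d'\circ d'$ term yields exactly the Gerstenhaber bracket (with the brace/shift conventions making $C^{\ge1}[1]$ a dg Lie algebra), and that no extra higher terms appear on the Lie side. The latter holds because the target is a dg Lie algebra, so only $\ell_1,\ell_2$ occur. I would handle the signs by working throughout with the dual coalgebra picture: $S^c(L[1])\otimes T^c(R[1])$, with a coderivation that is $Q$ on the first factor plus the twisted $b$. This reduces everything to the standard statement that $L_\infty$-morphisms into a dg Lie algebra $\mathfrak h$ correspond to Maurer--Cartan elements of $\operatorname{Hom}(S^c(L[1]),\mathfrak h)$, here with $\mathfrak h=C^{\ge1}(R,R)[1]$ and with the $A_\infty$-structure $m$ being Maurer--Cartan in $C(R,R)[1]$.
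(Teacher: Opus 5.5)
Your route is the same as the paper's. A derivation of $S^\bullet(L^*[-1])\otimes T^\bullet(R^*[-1])$ is determined by its values on $L^*[-1]\oplus R^*[-1]$; one splits $d|_{R^*[-1]}=d_R+d'$, dualizes the bidegree components of $d'$ into the $\alpha_k$, and reads $d^2=0$ on $R^*[-1]$ as the Stasheff identities plus the $L_\infty$-morphism equations into $\bigl(C^{\ge1}(R,R)[1],[m,-],[\,,\,]\bigr)$. Your Step 3 expansion, and your plan to control signs via Maurer--Cartan elements of $\operatorname{Hom}(S^c(L[1]),\mathfrak h)$, are more explicit than the paper, which only asserts this equivalence.

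One step fails as written, though. In Step 2 you derive the vanishing of the $\mathcal T_+$-components of $d(L^*[-1])$ from the hypothesis on $1\otimes\mathcal T_+$. You derive the absence of a constant term in $d_R$ from the hypothesis on $\mathcal S_+\otimes 1$. Both attributions are backwards.

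Preserving the ideal $\mathcal S\otimes\mathcal T_+$ generated by $R^*[-1]$ constrains only $d(R^*[-1])$. It kills the $n=0$ components of $d'$ and the constant term of $d_R$, and lets $d$ descend to $\mathcal S$. It says nothing about $d(L^*[-1])$.

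The vanishing you need must come from reading the first hypothesis as the paper's proof does: $d(\mathcal S_+\otimes 1)\subset\mathcal S_+\otimes 1$, so that $d$ restricts to the subalgebra $\mathcal S\otimes 1$ and $d_L$ is that restriction. The statement's phrase ``preserving the ideals'' is loose here, since neither $\mathcal S_+\otimes 1$ nor $1\otimes\mathcal T_+$ is an ideal of the tensor product.

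This is not cosmetic. If both hypotheses are read only as ideal conditions, $d|_{L^*[-1]}$ can have central components in $\mathcal S_+\otimes\mathcal T_+$ that are invisible in the data (i). For example, take $R$ one-dimensional with generator $r\in R^*[-1]$ of odd degree. Then $r^2$ is central, and $d(x)=y\,r^2$, $d(y)=d(r)=0$ for $x,y\in L^*[-1]$ of suitable degrees is a square-zero derivation satisfying both ideal conditions. It yields the same data (i) as $d=0$, so your claim that the construction is invertible would fail.

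Relatedly, the Step 1 claim that an arbitrary pair of maps into $\mathcal S\otimes\mathcal T$ extends to a derivation is false. Since $L^*[-1]$ is central, $d(L^*[-1])$ must also be central. Once you impose $d(L^*[-1])\subset\mathcal S$ as above, this holds automatically and the rest of your argument goes through.
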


\begin{proof}
Set
\[
A:=S^\bullet(L^*[-1])\otimes T^\bullet(R^*[-1]).
\]

Assume first that a differential \(d\) as in \((4)\) is given. Since \(d\) preserves the ideal $S^\bullet_+(L^*[-1])\otimes 1,$ its restriction to \(S^\bullet(L^*[-1])\otimes 1\) defines a square-zero derivation
\[
d_L\colon S^\bullet(L^*[-1])\longrightarrow S^\bullet(L^*[-1]).
\]
Hence \(d_L\) determines an \(L_\infty\)-structure on \(L\).

Similarly, because \(d\) preserves the ideal $1\otimes T^\bullet_+(R^*[-1]),$ its restriction to \(1\otimes T^\bullet(R^*[-1])\), followed by projection to the second tensor factor, gives a square-zero derivation
\[
d_R\colon T^\bullet(R^*[-1])\longrightarrow T^\bullet(R^*[-1]).
\]
Thus \(d_R\) determines an \(A_\infty\)-structure on \(R\).

It remains to extract the \(L_\infty\)-morphism \(\alpha\). Since \(d\) is a derivation, it is determined by its restriction to the generators $L^*[-1]\oplus R^*[-1].$ The restriction to \(L^*[-1]\) is already encoded by \(d_L\). On \(R^*[-1]\), write
\[
d|_{R^*[-1]} = d_R + d_{\mathrm{mix}},
\]
where \(d_{\mathrm{mix}}\) is the sum of all terms with positive symmetric degree in
\(S^\bullet(L^*[-1])\). Decomposing according to bidegree, we obtain maps
\[
d_{n,m}\colon R^*[-1]\longrightarrow S^n(L^*[-1])\otimes T^m(R^*[-1]),
\qquad n\ge 1,\ m\ge 1.
\]
Dualizing and undoing the shifts yields maps
\[
\alpha_{n,m}\colon S^n(L[-1])\longrightarrow \Hom_k(R^{\otimes m},R)[1-m].
\]
For each \(n\ge 1\), summing over \(m\ge 1\) gives
\[
\alpha_n\colon S^n(L[-1])\longrightarrow
\bigoplus_{m\ge 1}\Hom_k(R^{\otimes m},R)[1-m]
=
C^{\ge 1}(R,R)[1].
\]
The collection \(\{\alpha_n\}_{n\ge 1}\) defines a morphism
\[
\alpha\colon L\longrightarrow C^{\ge 1}(R,R)[1]\simeq R\mathrm{Der}(R).
\]
The identity \(d^2=0\), when applied to the generators \(R^*[-1]\), is exactly the system of quadratic relations saying that \(\alpha\) is an \(L_\infty\)-morphism compatible with the differentials \(d_L\) and \(d_R\). Hence from \(d\) we recover the data in \((i)\).

Conversely, assume given an \(A_\infty\)-structure on \(R\), an \(L_\infty\)-structure on \(L\), and an \(L_\infty\)-morphism
\[
\alpha\colon L\longrightarrow C^{\ge 1}(R,R)[1].
\]
Let
\[
d_L\colon S^\bullet(L^*[-1])\to S^\bullet(L^*[-1]),
\qquad
d_R\colon T^\bullet(R^*[-1])\to T^\bullet(R^*[-1])
\]
be the corresponding square-zero derivations. Write the components of \(\alpha\) as
\[
\alpha_n\colon S^n(L[-1])\longrightarrow C^{\ge 1}(R,R)[1]
=
\bigoplus_{m\ge 1}\Hom_k(R^{\otimes m},R)[1-m],
\]
and let
\[
\alpha_{n,m}\colon S^n(L[-1])\longrightarrow \Hom_k(R^{\otimes m},R)[1-m]
\]
denote the component landing in \(\Hom_k(R^{\otimes m},R)[1-m]\). Dualizing, we obtain maps
\[
d_{n,m}\colon R^*[-1]\longrightarrow S^n(L^*[-1])\otimes T^m(R^*[-1]).
\]
Define
\[
d_{\mathrm{mix}}:=\sum_{n\ge 1,\ m\ge 1} d_{n,m}.
\]
Now define \(d\) on the generators \(L^*[-1]\oplus R^*[-1]\) by
\[
d|_{L^*[-1]}:=d_L,
\qquad
d|_{R^*[-1]}:=d_R+d_{\mathrm{mix}},
\]
and extend it to all of \(A\) by the Leibniz rule. By construction, \(d\) preserves the ideals
\[
S^\bullet_+(L^*[-1])\otimes 1
\qquad\text{and}\qquad
1\otimes T^\bullet_+(R^*[-1]).
\]

It remains to check that \(d^2=0\). On \(L^*[-1]\), this is exactly the identity $d_L^2=0,$ which expresses the \(L_\infty\)-relations on \(L\). On \(R^*[-1]\), the equation $d^2=0$ expands into the compatibility relations among \(d_L\), \(d_R\), and the mixed terms \(d_{n,m}\). After dualizing, these are precisely the equations expressing that
\[
\alpha\colon L\longrightarrow C^{\ge 1}(R,R)[1]
\]
is an \(L_\infty\)-morphism. Hence \(d^2=0\), so \(d\) is an algebra differential of the form described in \((ii)\).

The two constructions are inverse to each other, since both are obtained by passing back and forth between the matrix coefficients \(d_{n,m}\) and \(\alpha_{n,m}\) by duality. Therefore the data in \((i)\) are equivalent to the datum \((ii)\).
\end{proof}

\section{Construction of the $L_\infty$-algebra}
\label{sec:linf-elliptic}

Let \(A \subset E\) be a finite point configuration on the elliptic curve \(E\). The purpose of this section is to explain the geometric structure from which our \(L_\infty\)-algebra will arise. In the classical planar situation, the relevant combinatorial object is the secondary polytope associated to a finite point configuration. Our goal is to formulate an analogous picture in the elliptic--curve setting.

\subsection{Fixed-domain construction}
Let \(E\) be an elliptic curve, and let
\(
\pi:\mathbb C\longrightarrow E
\)
be the universal covering map. We use the affine structure on
\(\mathbb C\) to define polygonal regions on \(E\).

\begin{definition}
Let \(A\subset E\) be a finite point configuration. Choose lifts of the points
of \(A\) to \(\mathbb C\), and denote the resulting lifted configuration by
\(\widetilde A\subset \mathbb C\). Suppose that the convex hull
\[
\widetilde P:=\operatorname{Conv}(\widetilde A)
\]
is contained in a domain on which \(\pi\) is injective. Equivalently,
\[
\pi|_{\widetilde P}:\widetilde P\longrightarrow \pi(\widetilde P)
\]
is a homeomorphism onto its image.

The subset
\[
P:=\pi(\widetilde P)\subset E
\]
is called the \emph{polygonal region associated to the lifted configuration}
\(\widetilde A\). The pair \((P,A)\) is called a \emph{marked polygonal
region}. Here \(A\subset P\) is regarded as the set of marked points of the
region. We do not require all points of \(A\) to lie on the boundary of \(P\);
some points may lie in the interior of \(P\).

If \(\widetilde P'\subset \widetilde P\) is a convex subpolygon, and if
\[
\widetilde A':=\widetilde A\cap \widetilde P',
\qquad
A':=\pi(\widetilde A')\subset A,
\]
then
\[
P':=\pi(\widetilde P')
\]
is called a \emph{polygonal subregion} of \(P\), and the pair \((P',A')\) is
called a \emph{marked subpolygon} of \((P,A)\).

\begin{figure}[h]
\centering
\begin{overpic}[width=0.7\textwidth]{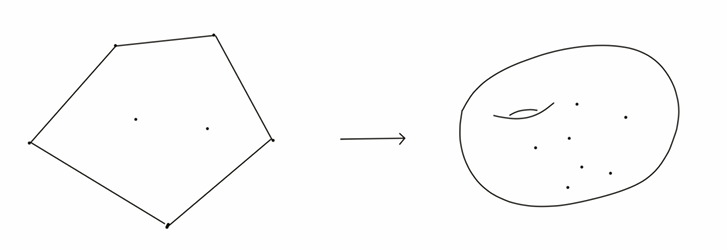}
    \put(51,17){$\pi$}
\end{overpic}
\caption{Lifted configuration in $\mathbb C$.}
\end{figure}
\end{definition}

\begin{remark}
This definition uses a special feature of elliptic curves: the universal cover
of \(E\) is the affine plane \(\mathbb C\), and the deck transformations are
translations. Hence notions such as convex hull, straight line segment,
Euclidean polygon, affine triangulation, and secondary polytope are inherited
directly from \(\mathbb C\).

For a curve of genus \(g>1\), the universal cover is the disk rather than an
affine vector space. Although one can still pass to the universal cover, there
is no canonical affine structure compatible with the deck group. Consequently
there is no canonical analogue of the convex hull of a lifted point
configuration, nor a direct secondary polytope associated to it. One could
introduce extra geometric data, for example a hyperbolic metric, and study geodesic polygons or related objects.
However, such choices are additional noncanonical structures and do not give a
formal repetition of the construction used here. For this reason, the present
construction is restricted to the elliptic curve case.
\end{remark}

\medskip

Let \(E\) be an elliptic curve, written as a complex torus $E \cong \mathbb{C}/\Lambda$. We use the standard viewpoint that a holomorphic
quadratic differential determines a flat surface structure away from its
zeroes; see \cite{ZorichFlatSurfaces} for background on flat surfaces
and translation structures.
The translation-invariant differential \(dz\) on \(\mathbb{C}\) descends to a nowhere-vanishing holomorphic \(1\)-form on \(E\). Hence $q = dz^{\otimes 2}$ defines a holomorphic quadratic differential on \(E\) without zeroes. The quadratic differential \(q\) determines a flat metric on \(E\), given in a local coordinate \(z=x+iy\) by
\[
ds_q^2 = |q| = |dz|^2 = dx^2+dy^2 .
\]
Accordingly, the associated area form is
\[
dA_q = \frac{i}{2}\, dz\wedge d\bar z = dx\wedge dy .
\]
Therefore, for any polygonal region \(P \subset E\), its area with respect to the flat structure defined by \(q\) is
\[
\operatorname{Area}_q(P)=\int_P dA_q.
\]
Equivalently, after choosing a lift \(\widetilde P \subset \mathbb{C}\), one may compute
\[
\operatorname{Area}_q(P)=\int_{\widetilde P} dx\,dy,
\]
which is independent of the choice of lift since deck transformations act on \(\mathbb{C}\) by translations and preserve the Euclidean area form.

Once the lift \((\widetilde P,\widetilde A)\subset \mathbb C\) is fixed, it is
an ordinary finite point configuration in a convex polygon in the affine
plane. Therefore the usual theory of regular subdivisions, triangulations,
and secondary polytopes applies to \((\widetilde P,\widetilde A)\). Whenever
we speak about subdivisions, triangulations, or secondary polytopes of the
marked polygonal region \((P,A)\), we mean the corresponding objects
associated to the chosen lift \((\widetilde P,\widetilde A)\).

\begin{definition}
Let \((P,A)\) be a marked polygonal region on \(E\), obtained from a lifted
configuration
\[
(\widetilde P,\widetilde A)\subset \mathbb C
\]
as above. A \emph{triangulation} of \((P,A)\) is the projection to \(E\) of an
ordinary affine triangulation \(\widetilde{\mathcal T}\) of
\((\widetilde P,\widetilde A)\). Equivalently, it is a decomposition of \(P\)
into geodesic triangles whose vertices belong to \(A\), such that distinct
triangles meet only along common faces and the union of all triangles is \(P\).
We always require that this decomposition lifts to a genuine affine
triangulation of \((\widetilde P,\widetilde A)\).

Let \(\mathcal T\) be such a triangulation. For each triangle
\(\Delta\in\mathcal T\), choose its lift
\(\widetilde\Delta\in\widetilde{\mathcal T}\). Since
\(\pi|_{\widetilde P}\) is a homeomorphism onto \(P\), this lift is unique.
We define the area of \(\Delta\) by
\[
\operatorname{Area}_q(\Delta)
:=
\int_{\Delta} dA_q
=
\int_{\widetilde\Delta} dx\wedge dy,
\]
where \(q=dz^{\otimes 2}\) and \(dA_q=\frac{i}{2}dz\wedge d\bar z=dx\wedge dy\)
is the associated flat area form.
\end{definition}

For a triangulation \(\mathcal T\) of \((P,A)\), define its GKZ vector
\[
\phi_{\mathcal T}\in \mathbb R^A
\]
by
\[
\phi_{\mathcal T}
=
\sum_{\Delta\in\mathcal T}
\operatorname{Area}_q(\Delta)
\sum_{a_i\in \operatorname{Vert}(\Delta)} e_i,
\]
where \(\{e_i\}_{a_i\in A}\) is the standard basis of \(\mathbb R^A\). In
other words, the \(a_i\)-coordinate of \(\phi_{\mathcal T}\) is
\[
(\phi_{\mathcal T})_i
=
\sum_{\Delta\in\mathcal T,\; a_i\in \operatorname{Vert}(\Delta)}
\operatorname{Area}_q(\Delta).
\]

\begin{definition}
The \emph{secondary polytope} associated to the marked polygonal region
\((P,A)\) is
\[
\Sigma(P,A)
:=
\operatorname{Conv}
\left\{
\phi_{\mathcal T}
\ \middle|\
\mathcal T \text{ is a triangulation of }(P,A)
\right\}
\subset \mathbb R^A.
\]
When the marked polygonal region \((P,A)\) is clear from context, we write
\[
\Sigma(A):=\Sigma(P,A).
\]
Similarly, if \((P',A')\subset(P,A)\) is a marked subpolygon, we write
\(\Sigma(A')\) for the secondary polytope associated to \((P',A')\).
\end{definition}

The definition is exactly the usual GKZ construction applied to the lifted
point configuration \((\widetilde P,\widetilde A)\subset\mathbb C\). The
projection to \(E\) introduces no additional ambiguity, because
\(\pi|_{\widetilde P}\) is injective and the area form \(dA_q\) pulls back to
the standard Euclidean area form \(dx\wedge dy\) on \(\mathbb C\). Therefore
the secondary polytope \(\Sigma(P,A)\) has the same combinatorial properties
as the classical secondary polytope of a planar point configuration.

In particular, it satisfies the usual factorization property. Namely, if
\(S\) is a regular polygonal subdivision of \((P,A)\), obtained from a regular
subdivision \(\widetilde S\) of \((\widetilde P,\widetilde A)\), then the face
of \(\Sigma(P,A)\) corresponding to \(S\) is naturally identified with the
product of the secondary polytopes of the cells of \(S\):
\[
F_S
\simeq
\prod_{C\in S} \Sigma(C,A_C).
\]
Here \(C\) runs over the two--dimensional cells of the subdivision, and
\(A_C:=A\cap C\) denotes the set of marked points lying in \(C\). This is the
standard GKZ factorization theorem applied upstairs to
\((\widetilde P,\widetilde A)\).

\medskip

The following construction is the elliptic-curve analogue of the \(L_\infty\)-algebra construction of \cite[Section~3]{KKS}.
Define the graded vector space
\begin{equation}
\label{eq:V-elliptic}
V \;:=\; \bigoplus_{\substack{(P',A')\subset(P, A)\\ }}
V_{A'},
\qquad
V_{A'}:=\mathrm{or}(\Sigma(A'))[\dim\Sigma(A')].
\end{equation}
Let $v_{A'}\in V_{A'}$ denote the generator given by the oriented fundamental
class of $\Sigma(A')$. Using the factorization property (Proposition~\ref{prop:factorization}),
we define a degree~$+1$ derivation
\[
d:S^\bullet(V)\to S^\bullet(V)
\]
by prescribing it on generators:
\begin{equation}
\label{eq:dv-elliptic}
d(v_{A'})
\;:=\;
\sum_{\mathcal S}
\varepsilon(\mathcal S)\;
\bigodot_{i\in I(\mathcal S)} v_{A_i},
\end{equation}
where the sum runs over all \emph{coarse subdivisions} $\mathcal S$ of
$(P',A')$ with cells $\{(P_i,A_i)\}_{i\in I(\mathcal S)}$, and where
$\varepsilon(\mathcal S)\in\{\pm 1\}$ is the sign determined by the induced
orientation on the corresponding codimension--1 face of $\Sigma(A')$. Then we can extend $d$ to $S^\bullet(V)$ by the graded Leibniz rule.

\begin{proposition}
\label{prop:d2zero-elliptic}
The derivation $d$ satisfies $d^2=0$. Hence $(S^\bullet(V),d)$ is a commutative dg
algebra.
\end{proposition}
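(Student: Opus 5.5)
Since \(d\) is an odd derivation of the graded commutative algebra \(S^\bullet(V)\), its square \(d^2=\tfrac12[d,d]\) is again a derivation. It is therefore enough to check \(d^2(v_{A'})=0\) on each generator \(v_{A'}\). The plan is to reduce this to the planar computation carried out in Section~\ref{sec:secondary-factorization}, by transporting everything to the fixed lift \((\widetilde P,\widetilde A)\subset\mathbb C\).

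\textbf{Step 1: reduction to the lift.} Because \(\pi|_{\widetilde P}\) is injective, marked subpolygons \((P',A')\subset(P,A)\) correspond bijectively to geometric marked subpolytopes \((\widetilde P',\widetilde A')\) of \((\widetilde P,\widetilde A)\). Since \(dA_q\) pulls back to \(dx\wedge dy\), the GKZ vectors agree, so \(\Sigma(P',A')=\Sigma(\widetilde A')\) as polytopes. Under this correspondence:
\begin{itemize}
\item coarse subdivisions of \((P',A')\) are exactly the coarse regular subdivisions of \((\widetilde P',\widetilde A')\);
\item the signs \(\varepsilon(\mathcal S)\) are defined by the same induced orientations.
\end{itemize}
Hence the isomorphism \(V\cong\widetilde V\) given by \(v_{A'}\mapsto v_{\widetilde A'}\) intertwines the derivation \eqref{eq:dv-elliptic} with the planar derivation \eqref{eq:dgenerator}.

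One point needs checking here: the planar \(\widetilde V\) must be taken over geometric subconfigurations only, or we must verify that the planar differential preserves that subspace. It does, because cells of a subdivision of a geometric subpolygon satisfy \(A_i=A'\cap Q_i\), and so they are again geometric.

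\textbf{Step 2: expanding \(d^2(v_{A'})\).} Expanding with the Leibniz rule gives
\[
d^2(v_{A'})=\sum_{\mathcal S}\varepsilon(\mathcal S)\sum_{i}\pm\, v_{A_1}\odot\cdots\odot d(v_{A_i})\odot\cdots .
\]
This is a signed sum over pairs consisting of a coarse subdivision \(\mathcal S\) and a coarse subdivision of one of its cells. By Proposition~\ref{prop:factorization}, each such pair is a codimension-one face of \(F_{\mathcal S}\cong\prod_i\Sigma(A_i)\). Equivalently, it is a codimension-two face \(G\) of \(\Sigma(A')\), with \(G\cong\prod_\mu\Sigma(A'''_\mu)\), and its monomial is \(\bigodot_\mu v_{A'''_\mu}\).

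Every codimension-two face of a polytope lies in exactly two facets. So each monomial arises from exactly two terms, and distinct faces \(G\) give distinct monomials because \(G\) is determined by its subdivision. The identity \(\partial^2=0\) in \(C_\bullet(\Sigma(A'))\) then forces the two contributions to carry opposite signs, provided the signs in the algebra match the cellular signs.

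\textbf{Step 3: sign matching (the main obstacle).} The delicate step is checking that, under the factorization isomorphism, the orientation of \(\partial F_{\mathcal S}\) matches the graded Leibniz sign. Concretely, the boundary of a product \(\prod_i\Sigma(A_i)\) is
\[
\sum_i(-1)^{\sum_{j<i}\dim\Sigma(A_j)}\,\Sigma(A_1)\times\cdots\times\partial\Sigma(A_i)\times\cdots,
\]
and this sign is exactly the Koszul sign \((-1)^{\sum_{j<i}|v_{A_j}|}\) for passing \(d\) past the earlier factors, since \(|v_{A_j}|\) is given by the shift by \(\dim\Sigma(A_j)\).

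We also need the orientation lines \(\mathrm{or}(F_{\mathcal S})\) and \(\bigotimes_i\mathrm{or}(\Sigma(A_i))\) to be identified compatibly with the graded-commutative reordering of the factors \(v_{A_i}\). This is precisely the convention fixed in the planar construction, citing \cite{KKS,KSS} for the detailed sign verification.

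Granting this, \(d^2(v_{A'})\) is the image of \(\partial^2[\Sigma(A')]=0\) under the factorization identification. Therefore \(d^2=0\), and \((S^\bullet(V),d)\) is a commutative dg algebra.
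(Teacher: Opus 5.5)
Your proposal is essentially the paper's proof: the paper also works on the lift, identifies the terms of \(d^2(v_{A'})\) with codimension-two faces of \(\Sigma(A')\) via factorization, lets them cancel in pairs by \(\partial^2=0\) in \(C_\bullet(\Sigma(A'))\), and takes the sign conventions from the planar setting as you do. One correction to your Step~1 aside: the differential does not preserve geometric generators, but this is not needed since \(V\) runs over all marked subpolygons, and the correct statement for the geometric part is that the non-geometric generators span a \(d\)-stable ideal, so \(d\) descends to \(S^\bullet(V_{\mathrm{geom}})\). For example, take a geometric triangle \(abc\) whose only interior marked point is \(x\); the endpoint of the segment \(\Sigma(\{a,b,c,x\})\) given by the triangulation omitting \(x\) is a facet, and its single cell \((\operatorname{Conv}(a,b,c),\{a,b,c\})\) is non-geometric.
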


\begin{proof}
This is identical to the planar case: $d(v_{A'})$ is the cellular boundary of the
top cell of $\Sigma(A')$ expressed via factorization, and $d^2=0$ follows from
$\partial^2=0$ in the cellular chain complex of $\Sigma(A')$.
\end{proof}

By Section~\ref{sec:ainf-linf-koszul}, the differential $d$ on the algebra
$S^\bullet(V)$ determines an $L_\infty$-algebra structure on the shifted dual
\begin{equation}
\label{eq:g-elliptic}
\overset{\bullet}{\mathfrak g} \;:=\; V^*[-1]
\;=\;
\bigoplus_{\substack{(P',A')\subset(P, A)\\ }}
E_{A'},
\qquad
E_{A'}:=V_{A'}^*[-1]
=\mathrm{or}(\Sigma(A'))[-\dim\Sigma(A')-1].
\end{equation}

However, the $L_\infty$--algebra $\overset{\bullet}{\mathfrak g}$ is typically far too
large for our purposes: the direct sum in~\eqref{eq:g-elliptic} runs over \emph{all}
marked subpolygons $(P',A')$, including many which are not relevant to the
geometry on the elliptic curve. To obtain the correct algebra, we restrict to the \emph{geometric summands} in the sense of
Definition~\ref{def:geometric-subdivision}.

We define a subspace of $\mathfrak g$ using the geometric summands:
\[
\mathfrak g
\;:=\;
\bigoplus_{\substack{(P',A')\subset(P, A)\\ \text{geometric}}}
E_{A'}
\;\subset\;
\overset{\bullet}{\mathfrak g}.
\]

\begin{proposition}\label{prop:geom-subalg}
The subspace $\mathfrak g\subset \overset{\bullet}{\mathfrak g}$ is closed under all
$L_\infty$--operations $\ell_n$. Hence $\mathfrak g$ is an $L_\infty$--subalgebra
of $\overset{\bullet}{\mathfrak g}$.
\end{proposition}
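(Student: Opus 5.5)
The plan is to dualize: closure of \(\mathfrak g\) under all \(\ell_n\) is equivalent to a statement about the differential \(d\) on \(S^\bullet(V)\). By the Koszul-duality dictionary of Section~\ref{sec:ainf-linf-koszul}, the brackets \(\ell_n\) on \(\overset{\bullet}{\mathfrak g}=V^*[-1]\) are the duals of the components
\[
d_n\colon V\longrightarrow \operatorname{Sym}^n(V)
\]
of \(d\) restricted to generators. Write \(V=V_{\mathrm{geom}}\oplus V_{\mathrm{ng}}\), where \(V_{\mathrm{geom}}\) is spanned by the \(v_{A'}\) with \((P',A')\) geometric and \(V_{\mathrm{ng}}\) by the rest. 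Then \(\mathfrak g=V_{\mathrm{geom}}^*[-1]\) is the annihilator of \(V_{\mathrm{ng}}\). Hence \(\mathfrak g\) is closed under every \(\ell_n\) if and only if the ideal \(I\subset S^\bullet(V)\) generated by \(V_{\mathrm{ng}}\) is \(d\)-stable. Equivalently, \(d\) descends to the quotient \(S^\bullet(V_{\mathrm{geom}})=S^\bullet(V)/I\), and \(\mathfrak g\) is the \(L_\infty\)-algebra dual to this quotient cdga. Since \(d\) is a derivation, it suffices to check \(d(V_{\mathrm{ng}})\subset I\) on generators.

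The key combinatorial step concerns a non-geometric marked subpolygon \((P',A')\), meaning \(A'\subsetneq A\cap P'\). I will show that every coarse subdivision \(\mathcal S\) of \((P',A')\) has at least one non-geometric cell. Then every monomial \(\bigodot_i v_{A_i}\) in \eqref{eq:dv-elliptic} lies in \(I\).

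Choose a point \(a\in (A\cap P')\setminus A'\), working upstairs in \((\widetilde P,\widetilde A)\) where everything is planar. The cells \(P_i\) of \(\mathcal S\) cover \(P'\), so \(a\) lies in some cell \(P_i\). That cell has marked set \(A_i=A'\cap P_i\), which does not contain \(a\). Hence \(A_i\neq A\cap P_i\), so \((P_i,A_i)\) is non-geometric as a subpolygon of \((P,A)\).

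Geometricity is always measured relative to the ambient configuration \(A\), not \(A'\). This is why the argument works: the cells of a subdivision of \((P',A')\) carry only points of \(A'\) by the definition of polyhedral subdivision, condition (2) applied to \(A'\).

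The main point needing care is exactly this bookkeeping. The cells \(A_i\) must be the restrictions \(A'\cap P_i\), and the missing point \(a\) must lie in some cell rather than, for instance, only on a boundary that gets dropped. If \(a\) lies on a common edge of several cells, it is missing from all of them, which only strengthens the conclusion.

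I would also confirm the routine facts that the signs \(\varepsilon(\mathcal S)\) play no role and that the Leibniz extension preserves \(I\). Since \(I\) is an ideal and \(d(I)\subset I\) on generators, the latter follows. Dualizing then gives closure of \(\mathfrak g\) under every \(\ell_n\).
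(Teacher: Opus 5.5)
Your argument is correct, but it runs in the opposite direction from the paper's.

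The paper checks that $d(V_{\mathrm{geom}})\subset S^\bullet(V_{\mathrm{geom}})$. It uses the lemma that every cell of a coarse subdivision of a \emph{geometric} polygon is geometric, and then asserts that this dualizes to closure of $\mathfrak g$. You instead check that the ideal $I$ generated by $V_{\mathrm{ng}}$ is $d$-stable, using the lemma that every coarse subdivision of a \emph{non-geometric} polygon has a non-geometric cell.

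Your dualization is the accurate one. Since $\mathfrak g$ is the annihilator of $V_{\mathrm{ng}}$, it is closed under the $\ell_n$ exactly when $d(I)\subset I$. The paper's condition, transposed literally, says something else: the span of the non-geometric summands $E_{A'}$ is an $L_\infty$-ideal. That exhibits $\mathfrak g$ as a \emph{quotient} of $\overset{\bullet}{\mathfrak g}$, not a subalgebra. Under the paper's literal definition of subdivision (cells $A_i=A'\cap Q_i$) both lemmas hold, so the sub and quotient structures on $\mathfrak g$ agree. Even so, the closure statement itself genuinely needs your lemma.

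Your route is also the more robust one. Suppose coarse subdivisions may drop interior marked points, as in the standard GKZ setting (this is needed for $\dim\Sigma(A)=|A|-3$). The paper itself allows this in the proof of Proposition~\ref{prop:Rtp-associative}. Then the paper's lemma fails. Take a triangle $abc$ whose only interior marked point is $d$. The one-cell subdivision $\{(Q,\{a,b,c\})\}$ contributes the non-geometric generator $v_{abc}$ to $d(v_{abcd})$. Hence $\ell_1(e_{abc})=\pm e_{abcd}$, and the non-geometric part is not an ideal. Your lemma survives in that generality, because $A_i\subset A'\cap P_i$ still omits the missing point $a$.
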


\begin{proof}
Recall that the $L_\infty$--structure on $\overset{\bullet}{\mathfrak g}=V^*[-1]$ is obtained
from the derivation
\[
d:S^\bullet(V)\to S^\bullet(V),
\qquad d^2=0,
\]
by Koszul duality (Section~\ref{sec:ainf-linf-koszul}).  Concretely, write
\[
d|_V=\sum_{k\ge 1} d_k,
\qquad d_k:V\to S^k(V),
\]
and let $\ell_k$ be the dual $k$--ary bracket on $\overset{\bullet}{\mathfrak g}$.
Thus it suffices to show that the restriction of $d$ to the geometric part of
$V$ lands in the symmetric algebra generated by geometric generators.

\medskip
Let $(Q',A')$ be geometric and consider the generator
\[
v_{A'}\in V_{A'}=\mathrm{or}(\Sigma(A'))[\mathrm{dim}\Sigma(A')]\subset V.
\]
By definition of $d$ (via cellular boundaries and factorization), every monomial
appearing in $d(v_{A'})$ is indexed by a coarse regular subdivision
\[
\mathcal S=\{(Q_i,A_i)\}_{i\in I}
\]
of the marked polytope $(Q',A')$, and has the form
\begin{equation}\label{eq:dgeom-monomial}
\pm\, v_{A_1}\odot\cdots\odot v_{A_{|I|}}.
\end{equation}

We claim that each cell \((Q_i,A_i)\) is geometric. Since \(S\) is a subdivision of
\((Q',A')\), we have
\[
A_i=A'\cap Q_i.
\]
Since \((Q',A')\) is geometric, \(A'=\widetilde A\cap Q'\). Hence
\[
A_i=A'\cap Q_i
      =(\widetilde A\cap Q')\cap Q_i
      =\widetilde A\cap Q_i,
\]
because \(Q_i\subset Q'\). Therefore each \((Q_i,A_i)\) is geometric.

Consequently, every factor $v_{A_i}$ in~\eqref{eq:dgeom-monomial} is a geometric
generator.  Hence
\[
d\bigl(V_{\mathrm{geom}}\bigr)\subset S^\bullet(V_{\mathrm{geom}}),
\]
where $V_{\mathrm{geom}}\subset V$ is the direct sum of $V_{A'}$ over geometric
$(Q',A')$.

\medskip
Dualizing and shifting, this implies that the coderivation defining the
$L_\infty$--structure preserves the cofree coalgebra generated by
$\mathfrak g[1]\subset \overset{\bullet}{\mathfrak g}[1]$, equivalently
\[
\ell_n(\mathfrak g,\dots,\mathfrak g)\subset \mathfrak g
\qquad\text{for all }n\ge 1.
\]
Thus $\mathfrak g$ is an $L_\infty$--subalgebra of $\overset{\bullet}{\mathfrak g}$.
\end{proof}

\subsection{Construction for arbitrary lifted configurations}
\label{different-lift}

In the construction above, we chose the lifts of the points of \(A\) inside a
fixed fundamental domain of the universal cover
\(\pi:\mathbb C\longrightarrow E.
\)
This choice is a convenient normalization, but it is not essential for the
algebraic construction. More generally, one may choose an arbitrary lift
\[
\widetilde A=\{\widetilde a_i\}_{i\in A}\subset \mathbb C
\]
of the configuration \(A\), allowing the points \(\widetilde a_i\) to lie in
different fundamental domains.

For such a lifted configuration, the construction is performed upstairs in
\(\mathbb C\). Namely, we form the convex hull
\[
\widetilde P:=\operatorname{Conv}(\widetilde A)\subset \mathbb C
\]
and apply the usual construction of triangulations, secondary polytopes,
and their factorization properties to the finite point configuration
\((\widetilde P,\widetilde A)\). This produces an \(L_\infty\)-algebra, which
we denote by
\(
\mathfrak g_{\widetilde A}.
\)
Unlike the case where \(\widetilde P\) projects injectively to \(E\), the
projection \(\pi(\widetilde P)\) may now wrap around the elliptic curve or
self-overlap. Thus one should not necessarily regard
\(\pi(\widetilde P)\subset E\) as a polygonal region in the sense defined
above. The point is rather that the secondary-polytopal construction only
requires the affine point configuration \(\widetilde A\subset\mathbb C\).

The resulting \(L_\infty\)-algebra depends on the chosen lift
\(\widetilde A\). If all points of \(\widetilde A\) are translated by the same
deck transformation \(\lambda\in\Lambda\), then
\[
\widetilde A+\lambda
=
\{\widetilde a_i+\lambda\}_{i\in A}
\]
is affinely isomorphic to \(\widetilde A\). Since the GKZ construction is
invariant under affine translations, this gives a canonical identification
\[
\mathfrak g_{\widetilde A+\lambda}
\simeq
\mathfrak g_{\widetilde A}.
\]
On the other hand, changing the lift of only some of the points changes the
relative positions of the lifted configuration. It may change the convex hull,
the collection of triangulations, the associated secondary polytopes, and hence
the \(L_\infty\)-algebra. Therefore the algebra
\(\mathfrak g_{\widetilde A}\) should be regarded as attached not only to the
configuration \(A\subset E\), but to the additional choice of sheet data
\(\widetilde A\).

\begin{remark}
\label{dependence_AS}
This dependence on the lift is important for the relation with
Fukaya--Seidel categories. After choosing a lift of the regular value, a choice
of lifts of the critical values determines a corresponding collection of
lifted thimbles in the universal cover. Different choices of sheets may lead, after projection to the elliptic curve, to different directed collections. Thus
the family of \(L_\infty\)-algebras
\[
\{\mathfrak g_{\widetilde A}\}_{\widetilde A}
\]
as \(\widetilde A\) varies over possible lifts of \(A\), should be viewed as
encoding the algebraic models associated with these different lifted
presentations of the Fukaya--Seidel category.
\end{remark}

\subsection{Examples in lower dimensions}

We now give several low-dimensional examples of the \(L_\infty\)-algebras constructed above.
\begin{example}[Three points]
Let
\[
B=\{a,b,c\}\subset E
\]
be three distinct points, and choose lifts
\[
\widetilde a,\widetilde b,\widetilde c\in \mathbb C
\]
which form a nondegenerate triangle. Assume that there are no additional lifted
points of \(\widetilde A\) inside
\[
\operatorname{Conv}(\widetilde a,\widetilde b,\widetilde c).
\]
Then the secondary polytope \(\Sigma(B)\) is a point. Hence the corresponding
generator
\[
e_{abc}\in E_B\subset \mathfrak g
\]
does not support any nontrivial higher operation.
\end{example}

\begin{example}[Four points in convex position]
Let
\[
B=\{a,b,c,d\}\subset E
\]
and suppose that we can choose lifts
\[
\widetilde a,\widetilde b,\widetilde c,\widetilde d\in \mathbb C
\]
which are in convex position. Then
\[
\operatorname{Conv}(\widetilde B)
\]
is a quadrilateral. A diagonal gives a coarse regular subdivision into two
triangles. For example, the diagonal \((\widetilde a,\widetilde c)\) gives the
two triangular cells
\[
B_1=\{a,b,c\},
\qquad
B_2=\{a,c,d\}.
\]
The corresponding codimension-one face of \(\Sigma(B)\) contributes a term
\[
\pm v_{abc}\odot v_{acd}
\]
to the differential of \(v_{abcd}\). After dualizing, this gives a binary
operation
\[
\ell_2(e_{abc},e_{acd})=\pm e_{abcd}.
\]

\begin{figure}[h]
\centering
\begin{overpic}[width=0.7\textwidth]{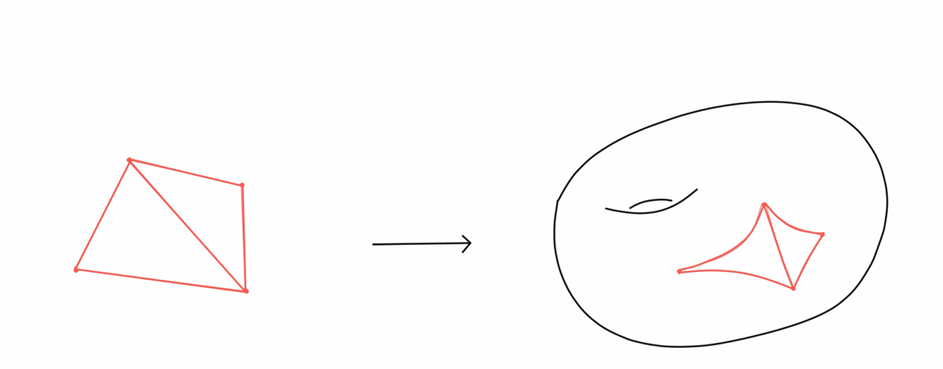}
    \put(15,30){$\mathbb C$}
    \put(75,30){$E$}
    \put(43,15){$\pi$}
    \put(13,23){$\tilde a$}
    \put(26,20){$\tilde b$}
    \put(26.5,6.5){$\tilde c$}
    \put(6,7){$\tilde d$}
\end{overpic}
\caption{Four points in convex position.}
\end{figure}

Similarly, the other diagonal \((\widetilde b,\widetilde d)\) gives
\[
\ell_2(e_{abd},e_{bcd})=\pm e_{abcd}.
\]

\end{example}

\begin{example}[Triangle with one interior point]
Let
\[
B=\{a,b,c,d\}\subset E
\]
and suppose that the chosen lifts satisfy
\[
\widetilde d\in
\operatorname{Int}\operatorname{Conv}(\widetilde a,\widetilde b,\widetilde c).
\]
Thus \(\widetilde a,\widetilde b,\widetilde c\) are the vertices of a triangle,
and \(\widetilde d\) is an interior point. The coarse regular subdivision into
three triangles is
\[
\{\widetilde a,\widetilde b,\widetilde d\},
\qquad
\{\widetilde b,\widetilde c,\widetilde d\},
\qquad
\{\widetilde c,\widetilde a,\widetilde d\}.
\]
Equivalently, writing
\[
B_1=\{a,b,d\},
\qquad
B_2=\{b,c,d\},
\qquad
B_3=\{c,a,d\},
\]
the corresponding face of the secondary polytope contributes a term
\[
\pm v_{abd}\odot v_{bcd}\odot v_{cad}
\]
to the differential of \(v_{abcd}\). Dualizing, we obtain a ternary operation
\[
\ell_3(e_{abd},e_{bcd},e_{cad})
=
\pm e_{abcd}.
\]
\begin{figure}[h]
\centering
\begin{overpic}[width=0.7\textwidth]{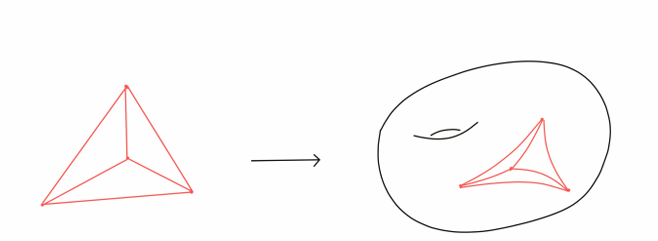}
    \put(15,30){$\mathbb C$}
    \put(75,30){$E$}
    \put(42,14){$\pi$}
    \put(16,23){$\tilde a$}
    \put(20,12){$\tilde d$}
    \put(30,6){$\tilde b$}
    \put(4,4){$\tilde c$}
\end{overpic}
\caption{Triangle with one interior point.}
\end{figure}
\end{example}

\begin{example}[General pattern]
Let
\[
B=\{a_1,\ldots,a_m\}\subset E
\]
and choose lifts
\[
\widetilde B=\{\widetilde a_1,\ldots,\widetilde a_m\}\subset \mathbb C.
\]
Suppose that \(\operatorname{Conv}(\widetilde B)\) admits a coarse regular
subdivision into \(k\) maximal cells
\[
B_1,\ldots,B_k.
\]
Then the corresponding codimension-one face of the secondary polytope
\(\Sigma(B)\) factors as
\[
\prod_{i=1}^k \Sigma(B_i).
\]
This contributes a term
\[
\pm v_{B_1}\odot\cdots\odot v_{B_k}
\]
to the differential of \(v_B\). After dualizing, this gives a potentially
nonzero \(k\)-ary bracket
\[
\ell_k(e_{B_1},\ldots,e_{B_k})
=
\pm e_B.
\]
\end{example}

\medskip
Recall \cite{Get} that an \(L_\infty\)-algebra \(L\) is called nilpotent if
there exists \(r_0>0\) such that all \(r\)-ary iterated superpositions of the
higher brackets vanish identically as maps
\[
        L^{\otimes r}\longrightarrow L
\]
for all \(r>r_0\).

\begin{proposition}
The \(L_\infty\)-algebra \(\mathfrak g_{\widetilde A}\) is nilpotent.
\end{proposition}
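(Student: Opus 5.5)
The plan is to exploit the fact that $\mathfrak g_{\widetilde A}$ is finite dimensional and that every bracket strictly increases a natural size grading. The lifted configuration $\widetilde A\subset\mathbb C$ is finite, say $|\widetilde A|=N$. Hence there are only finitely many marked subpolygons $(\widetilde P',\widetilde A')$, and $\mathfrak g_{\widetilde A}$ has a basis $\{e_{A'}\}$ indexed by them. For each basis element I would take as filtration index the cardinality $|A'|$, or equivalently the number of lifted points it involves. For a nontrivial bracket the relevant quantity is the total "excess" $\sum_i(|A_i|-3)$. Both are bounded by $N$.

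The key step is the dual reading of \eqref{eq:dv-elliptic}. By Koszul duality (Section~\ref{sec:ainf-linf-koszul}), $\ell_k(e_{A_1},\dots,e_{A_k})$ can be nonzero only if $\{(P_i,A_i)\}$ are the cells of a coarse regular subdivision $\mathcal S$ of some $(P',A')$ with $k\ge 2$ cells. In that case the bracket equals $\pm e_{A'}$. The unary bracket $\ell_1$ is dual to the linear part $d_1:V\to V$, which vanishes, since a coarse subdivision is nontrivial and so has at least two cells. Thus every nonzero bracket satisfies $k\ge 2$, and its output polygon $P'$ strictly contains each input cell $P_i$ as a proper subpolygon.

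Next I would count an invariant that grows under composition. Use the area of the convex hull, or more robustly the number of triangles in any triangulation, which equals $2|A'|-|\partial\text{-vertices}|-2$ by Euler's formula and is additive under subdivision. Let $t(A')$ denote the normalized number of triangles of a fine triangulation. This is well defined in the sense that $t(A')\ge 1$ and $t$ is additive over the cells of a subdivision, because areas are additive. The simplest version uses area: each nonzero bracket gives
\[
\operatorname{Area}(P')=\sum_{i=1}^k \operatorname{Area}(P_i).
\]
Now take an $r$-ary iterated superposition applied to basis elements $e_{B_1},\dots,e_{B_r}$. Unwinding the tree of brackets, its output, if nonzero, is $\pm e_{B}$, where the leaves $B_1,\dots,B_r$ are the cells of an iterated subdivision of $\operatorname{Conv}(B)$. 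The main point to check is that these leaves have pairwise disjoint interiors and tile $\operatorname{Conv}(B)$. This follows by induction on the tree from the fact that each bracket corresponds to a subdivision. Consequently
\[
r\cdot a_{\min}\le\sum_j\operatorname{Area}(B_j)=\operatorname{Area}(\operatorname{Conv}B)\le\operatorname{Area}(\widetilde P),
\]
where $a_{\min}>0$ is the minimal area of a nondegenerate triangle with vertices in $\widetilde A$. Each cell has positive area by general position, and there are only finitely many such triangles. So setting $r_0=\operatorname{Area}(\widetilde P)/a_{\min}$, all $r$-ary superpositions vanish for $r>r_0$. By multilinearity, it suffices to check this on basis elements.

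The main obstacle is making the tiling claim rigorous for an arbitrary superposition, not just for those trees coming from chains of refinements. The inputs of a single bracket must be exactly the cells of one coarse subdivision; otherwise the dual coefficient vanishes. Inductively, each subtree outputs $e_{C}$ with $C$ tiled by its leaves, so the claim closes. One must also handle the sign and skew-symmetry bookkeeping to be sure no other terms appear. An alternative that avoids geometry is to use the grading: $\ell_k$ has degree $2-k$, and $E_{A'}$ sits in degree $\dim\Sigma(A')+1$, which is bounded by $N-2$. Combined with the fact that each input has degree at least $1$, this also bounds $r$. I would present the area argument and mention the degree argument as a check.
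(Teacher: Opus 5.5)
Your area argument is correct and is essentially the paper's proof: an $r$-ary iterated superposition has nonzero matrix coefficients only along (not necessarily coarse) iterated subdivisions of a subpolygon into $r$ cells, and finiteness of $\widetilde A$ bounds $r$. Your explicit bound $r\le \operatorname{Area}(\widetilde P)/a_{\min}$ is in fact sounder than the paper's stated threshold $r\ge|\widetilde A|$. That threshold already fails for three hull points and two interior points, whose full triangulations have five triangles.

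Two side remarks should be corrected, although neither affects the main argument:
\begin{enumerate}
\item Your reason for $\ell_1=0$ is wrong. One-cell coarse subdivisions exist: they forget an interior marked point, and the paper itself uses them for $m_1$ in the larger algebra. On the geometric algebra $\ell_1$ vanishes for a different reason, namely that such a cell is never geometric.
\item The degree count does not bound $r$ by itself. It only shows that the output degree is at least (number of bracket nodes)$+1$, so it bounds the number of nodes in the tree, not the arity. For example, $\ell_r$ of $r$ degree-one triangles lands in degree $2$ for every $r$. You would need a separate bound on the arities of nonzero brackets, which is again combinatorial input.
\end{enumerate}
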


\begin{proof}
By construction, the matrix coefficients of the operation \(\lambda_n\) are
indexed by coarse subdivisions of marked subpolytopes $(\widetilde Q',\widetilde A')\subset
        (\widetilde Q,\widetilde A)$ into \(n\) marked subpolytopes.  Therefore the matrix coefficients of an
\(r\)-ary iterated superposition of the operations \(\lambda_n\) are indexed by
subdivisions of marked subpolytopes of \((\widetilde Q,\widetilde A)\) into
\(r\) marked subpolytopes, not necessarily coarse.

Since \(\widetilde A\) is finite, there is a uniform bound on the number of
nonempty marked subpolytopes which can occur in such a subdivision.  In
particular, no such subdivision exists for \(r\geq |\widetilde A|\).  Hence all
\(r\)-ary iterated superpositions vanish for \(r\geq |\widetilde A|\), and
\(\mathfrak g_{\widetilde A}\) is nilpotent.
\end{proof}

\section{Relative setting}
\label{sec:ainf-elliptic-notation}

In this section, we introduce an additional marked point \(p\in E\), called the stop.\footnote{
The terminology is reminiscent of the role of stops in partially wrapped Fukaya
categories; see \cite{Sylvan,GPS}. In the present paper, the stop is used as an
ordering datum in the relative secondary-polytope construction.
}
This extra datum allows us to refine the \(L_\infty\)-algebra to an \(A_\infty\)-algebra.
The underlying graded vector space remains unchanged, while the operations are modified to reflect the presence of the stop.

Moreover, to the \(L_\infty\)-algebra and the \(A_\infty\)-algebra, we associate an \(L_\infty\)-morphism from the former to the derived derivation space of the latter. In this way, the \(L_\infty\)-structure acts by infinitesimal deformations of the relative \(A_\infty\)-algebra.

\subsection{Construction of the $A_\infty$-algebra}

We fix the following data. Choose a stop \(p\in E\). As before, we fix lifts of the points of \(A\) to a single fundamental domain of the universal covering $\pi:\mathbb{C}\to E=\mathbb{C}/\Lambda$. We also choose a lift \(\widetilde p\in \mathbb{C}\) of the stop \(p\). Thus, upstairs, we have the lifted configuration contained in one fundamental domain, together with the point \(\widetilde p\), which may lie in an arbitrary translate of that domain, see Figure~\ref{liftp}.

\begin{figure}[h]
\centering
\begin{overpic}[width=0.7\textwidth]{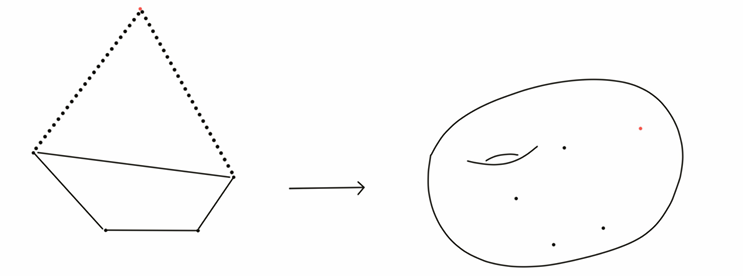}
    \put(43,13){$\pi$}
    \put(21,35){$\tp$}
    \put(87,17){$p$}
\end{overpic}
\caption{Lifted configuration with $\tp$.}
\label{liftp}
\end{figure}

\medskip
\noindent\textbf{Notation.}
Let \(A\subset E\) be a finite point configuration, let \(p\in E\) be the stop. We fix a lift \(\widetilde p\in \mathbb C\) of \(p\). We denote by $\widetilde A^{\circ}\subset \mathbb C$
the chosen set of lifts of the points of \(A\), and we set $\widetilde A:=\widetilde A^{\circ}\cup\{\widetilde p\}$.
Thus \(\widetilde A^{\circ}\) consists of the lifts of the original marked points, while
\(\widetilde A\) denotes the full lifted configuration, including the lift of the stop.

\medskip

We now introduce the basic combinatorial objects used in the $A_\infty$
construction. Set $\widetilde Q=\operatorname{Conv}\bigl(\widetilde A\bigr)\subset \mathbb{C}.$
Let $\bigl( Q',A'\bigr)$ be a subpolygon of  $\bigl(\widetilde Q,\widetilde A\bigr).$ We say that $\bigl( Q',A'\bigr)$ is \emph{rooted} if \(\widetilde p\in \operatorname{Vert}(Q')\), and \emph{unrooted} otherwise.
The relative \(A_\infty\)-construction below is modeled on the relative two--dimensional construction of \cite[Section~8]{KKS}.

Let \(\widetilde{\mathfrak g}\) be the \(L_\infty\)-algebra associated to the marked polygon $\bigl(\widetilde Q,\widetilde A\bigr)$.
By construction, \(\widetilde{\mathfrak g}\) admits a decomposition as a graded vector space. More precisely, we have
\[
\widetilde{\mathfrak g}=\mathfrak g \oplus \mathfrak g_{\mathrm{root}},
\]
where \(\mathfrak g\) is spanned by the summands corresponding to unrooted subpolygons, and \(\mathfrak g_{\mathrm{root}}\) is spanned by those corresponding to rooted subpolygons.

\begin{proposition}
With notation as above, both \(\mathfrak g\) and \(\mathfrak g_{\mathrm{root}}\) are \(L_\infty\)-subalgebras of \(\widetilde{\mathfrak g}\). Moreover, \(\mathfrak g_{\mathrm{root}}\) is an \(L_\infty\)-ideal in \(\widetilde{\mathfrak g}\).
\end{proposition}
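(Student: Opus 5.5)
The plan is to argue on the Koszul dual side, as in the proof of Proposition~\ref{prop:geom-subalg}. The \(L_\infty\)-structure on \(\widetilde{\mathfrak g}=V^*[-1]\) is dual to the derivation \(d\) on \(S^\bullet(V)\), where \(V=V_{\mathrm{un}}\oplus V_{\mathrm{root}}\) is split according to whether the indexing subpolygon \((Q',A')\) has \(\widetilde p\) as a vertex. The matrix coefficient of \(\ell_n\) sending \(e_{A_1},\dots,e_{A_n}\) to \(e_{A'}\) is nonzero only if \(\{(Q_i,A_i)\}\) is a coarse subdivision of \((Q',A')\). So every statement reduces to a combinatorial claim about which cells of a coarse subdivision contain \(\widetilde p\) as a vertex. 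The single geometric input is: if \(\widetilde p\) is a vertex of \(Q'\), then, since \(\widetilde p\) is an extreme point of \(Q'\) and \(Q'=\bigcup Q_i\), it lies in some cell \(Q_i\) and is automatically a vertex of that cell. Conversely, \(\widetilde p\in A_i\subset A'\) for some cell whenever \(\widetilde p\) is a vertex of some cell.

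\emph{Step 1 (\(\mathfrak g_{\mathrm{root}}\) is an ideal).} I would show that \(\ell_n(x_1,\dots,x_n)\in\mathfrak g_{\mathrm{root}}\) whenever at least one \(x_j\in\mathfrak g_{\mathrm{root}}\). Dually, the unrooted part of \(d\), \(d|_{V_{\mathrm{un}}}\), must land in \(S^\bullet(V_{\mathrm{un}})\). Suppose \(Q'\) is unrooted and some cell \(Q_i\) of a coarse subdivision had \(\widetilde p\) as a vertex. Then \(\widetilde p\in Q'\). If \(\widetilde p\notin A'\), this contradicts \(A_i\subset A'\). If \(\widetilde p\in A'\) is not a vertex of \(Q'\), then \(\widetilde p\) is a marked non-vertex point of \(Q'\) that becomes a vertex of a cell. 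This situation is exactly what the geometric/marked conventions need to exclude. I expect the main obstacle here: with interior marked points a coarse subdivision can use \(\widetilde p\) as a cell vertex (the triangle with an interior point example). So I would interpret ``rooted'' as \(\widetilde p\in A'\) rather than \(\widetilde p\in\operatorname{Vert}(Q')\), or invoke the convention of \cite[Section~8]{KKS} that \(\widetilde p\) lies outside \(\operatorname{Conv}(\widetilde A^\circ)\). Under that convention, \(\widetilde p\in Q'\) forces \(\widetilde p\) to be a vertex of \(Q'\), since \(\widetilde p\) is then an extreme point of \(\widetilde Q\). I would state this assumption explicitly and then conclude that every cell of a subdivision of an unrooted polygon is unrooted.

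\emph{Step 2 (subalgebras).} \(\mathfrak g_{\mathrm{root}}\) is a subalgebra because it is an ideal. For \(\mathfrak g\), dually I need \(d(V_{\mathrm{root}})\) to have no monomial made entirely of unrooted generators. This follows from the extreme-point observation: if \(\widetilde p\) is a vertex of \(Q'\), some cell \(Q_i\) contains \(\widetilde p\), and \(\widetilde p\) is a vertex of that cell. So every monomial of \(d(v_{A'})\) contains a rooted factor, and projecting \(d\) restricted to \(V_{\mathrm{un}}\) gives a closed derivation. Equivalently, \(\ell_n\) restricted to \(\mathfrak g\) has no component in \(\mathfrak g_{\mathrm{root}}\).

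\emph{Step 3 (dualization).} Finally, I would translate these statements about \(d\) into statements about the coderivation on \(S^c(\widetilde{\mathfrak g}[1])\), exactly as at the end of the proof of Proposition~\ref{prop:geom-subalg}. Since \(d\) is defined only through the index sets of coarse subdivisions, the signs \(\varepsilon(\mathcal S)\) play no role.
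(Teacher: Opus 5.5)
Your proposal is correct and is essentially the paper's argument, phrased on the dual differential on \(S^\bullet(V)\) rather than on the brackets. The paper checks directly that gluing only unrooted cells gives an unrooted polygon, and that gluing cells at least one of which is rooted gives a rooted polygon; these are exactly your Steps~2 and~1 read contrapositively. Your remark that the second fact needs \(\widetilde p\) to be an extreme point of \(\widetilde Q\) makes explicit something the paper uses silently; it is guaranteed by the paper's standing choice of \(\widetilde p\) far from the fundamental domain containing \(\widetilde A^\circ\). Commit to that convention rather than the alternative reading ``\(\widetilde p\in A'\)'': without it, the coarse subdivision that drops an interior marked point \(\widetilde p\) from \(A'\) would break Step~2.
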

\begin{proof}
The \(L_\infty\)-brackets on \(\widetilde{\mathfrak g}\) are defined by combining subpolygons
$Q_1',\dots,Q_n'$
into a coarser subdivision whose underlying marked polygon is denoted by \(Q'\). The key point is that rootedness is preserved under this operation in the evident way.

If each \(Q_i'\) is unrooted, then none of them has \(\widetilde p\) as a vertex. Consequently, the resulting polygon \(Q'\) is again unrooted. It follows that the higher brackets of elements of \(\mathfrak g\) remain in \(\mathfrak g\). Hence \(\mathfrak g\) is an \(L_\infty\)-subalgebra of \(\widetilde{\mathfrak g}\).

On the other hand, if at least one of the \(Q_i'\) is \(\widetilde p\)-rooted, then the polygon obtained by combining them still has \(\widetilde p\) as a vertex, so \(Q'\) is rooted. Therefore, any higher bracket with at least one input in \(\mathfrak g_{\mathrm{root}}\) takes values in \(\mathfrak g_{\mathrm{root}}\). This shows that \(\mathfrak g_{\mathrm{root}}\) is an \(L_\infty\)-ideal in \(\widetilde{\mathfrak g}\). In particular, \(\mathfrak g_{\mathrm{root}}\) is itself an \(L_\infty\)-subalgebra.
\end{proof}

We now explain how the rooted part \(\mathfrak g_{\mathrm{root}}\) can be refined from an \(L_\infty\)-algebra to an \(A_\infty\)-algebra. The essential additional input is the choice of the lift \(\widetilde p\) of the stop. 
 We shall always choose the lift \(\widetilde p\) so that it is sufficiently far from the fundamental domain containing \(\widetilde A^{\circ}\). This separation ensures that the stop is distinguished from the original configuration and allows us to impose an ordering on the points as seen from \(\widetilde p\),  for instance in the counterclockwise direction around \(\widetilde p\), see Figure~\ref{fig:order-p}. It is precisely this additional ordering data that enters the definition of the \(A_\infty\)-structure.

\begin{figure}[h]
\centering
\begin{overpic}[width=0.3\textwidth]{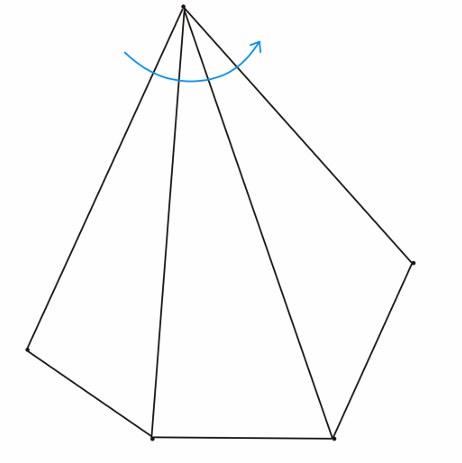}
    \put(33,100){$\tp$}
\end{overpic}
    \caption{The choice of \(\tilde p\) breaks the cyclic symmetry and determines a linear order.}
    \label{fig:order-p}
\end{figure}

\medskip

For each subpolytope $(Q',A')$ we set $V_{A'} \;:=\; \mathrm{or}\bigl(\Sigma(Q',A')\bigr)\,[\,\dim\Sigma(Q',A')\,],$
and define the graded vector space
\begin{equation}\label{eq:Vinfty}
V_r \;:=\; \bigoplus_{(Q',A')\\\ \operatorname{rooted}  } V_{A'}.
\end{equation}

A \emph{rooted subdivision} of the rooted polygon $(Q',A')$ is a collection of
rooted subpolygons $(Q_1',A_1'),\dots,(Q_k',A_k')$ such that $Q' = Q_1' \cup \cdots \cup Q_k'$, the intersection $Q_i'\cap Q_j'$ is a common face (possibly empty), and each
$Q_i'$ contains the same root vertex $\tilde p$.

The ordering of points in \(\widetilde A^\circ\) extends to rooted subpolygons:
each rooted subpolygon comes equipped with an ordering of its non-root vertices.
For every rooted subdivision
\[
(Q',A') \rightsquigarrow \bigl( (Q_1',A_1'),\dots,(Q_k',A_k') \bigr),
\]
we produce an ordered list of rooted subpolygons.

This ordering allows us to lift the differential \(d\) from the free commutative algebra \(S^\bullet(V_r)\) generated by \(V_r\) to a differential on the tensor algebra \(T^\bullet(V_r)\), viewed as the free associative algebra generated by \(V_r\). 
More precisely, let \(P''=\{(Q''_\nu,A''_\nu)\}_\nu\) be a coarse rooted subdivision of
a rooted marked polygon \((Q',A')\). Using the ordered list of rooted subpolygons
determined above, we define
\[
V_{P''}:=
\bigotimes_\nu V_{A''_\nu}\subset T^\bullet(V_r),
\]
where the embedding is given by tensor multiplication in the prescribed order.

We then define the action of \(d\) on the generators of \(T^\bullet(V_r)\) as follows. For a summand \(V_{A'}\subset V_r\), we let \(d\) be given by the top-degree part of the cellular chain differential in \(C_\bullet(\Sigma(A'))\), now regarded as a map
\[
d=\sum_{P''} d_{P''},
\qquad
d_{P''}\colon V_{A'} \longrightarrow V_{P''}\subset T^\bullet(V_r),
\]
where the sum ranges over all coarse rooted subdivisions \(P''\) of \((Q',A')\), and where each map \(d_{P''}\) is induced by the corresponding component of the chain differential in \(C_\bullet(\Sigma(A'))\). Finally, we extend \(d\) to the whole tensor algebra \(T^\bullet(V_r)\) by the Leibniz rule and get the following proposition.

\begin{proposition}\label{prop:d-squares-zero}
The differential $d$ defined above satisfies $d^2=0$.  In particular,
$(T(V_r),d)$ is a differential graded algebra.
\end{proposition}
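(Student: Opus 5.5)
Since \(d\) is a degree \(+1\) derivation of the free associative algebra \(T^\bullet(V_r)\), the square \(d^2=\tfrac12[d,d]\) is again a derivation. It is therefore determined by its values on generators, and it suffices to prove \(d^2(v_{A'})=0\) for every rooted marked subpolygon \((Q',A')\). The plan is to copy the argument for Proposition~\ref{prop:d2zero-elliptic}, which deduces \(d^2=0\) from \(\partial^2=0\) in \(C_\bullet(\Sigma(A'))\) via factorization (Proposition~\ref{prop:factorization}). The new ingredient is that commutativity of \(S^\bullet\) is no longer available, so the two contributions that cancel must produce the \emph{same ordered} tensor word.

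First I would identify the rooted subdivisions with the relevant faces of \(\Sigma(A')\). In the relative setting of \cite[Section~8]{KKS}, the rooted subdivisions of \((Q',A')\) index those faces of \(\Sigma(A')\) whose cells all contain \(\tilde p\). Because \(\tilde p\) is chosen far from the fundamental domain containing \(\widetilde A^\circ\), the rays from \(\tilde p\) cut \(Q'\) into cells that meet \(\tilde p\) in consecutive angular sectors. A refinement of a rooted subdivision into rooted subdivisions only subdivides one sector into smaller consecutive sectors.

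Next, expand \(d^2(v_{A'})\) by the Leibniz rule. It is a signed sum over chains \((Q',A')\rightsquigarrow P''\rightsquigarrow P'''\), where \(P''\) is coarse rooted and \(P'''\) refines exactly one cell \(Q''_\nu\) by a coarse rooted subdivision. The resulting word inserts the ordered cells of that refinement in place of \(v_{A''_\nu}\). Each \(P'''\) that occurs is a codimension-two face \(G\) of \(\Sigma(A')\) with \(G\cong\prod_\mu\Sigma(A'''_\mu)\), and the terms attached to \(G\) come from the codimension-one faces containing it. Here one must check that a codimension-two face arising from a rooted chain lies only in rooted codimension-one faces. This holds because any coarsening of a subdivision whose cells all contain \(\tilde p\) still has all cells containing \(\tilde p\): unions of consecutive sectors are again sectors.

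The key point is that both intermediate coarse rooted subdivisions through \(G\) yield the \emph{same} ordered word \(v_{A'''_1}\otimes\cdots\otimes v_{A'''_k}\). The reason is that the ordering by angle around \(\tilde p\) is intrinsic to the final cells and independent of the order in which they were produced. The signs agree with the cellular incidence signs in \(C_\bullet(\Sigma(A'))\), where the two paths give opposite signs. The Koszul signs from moving \(d\) past the preceding tensor factors in the Leibniz rule must match the orientation conventions for products of secondary polytopes. This matching is the same bookkeeping as in the commutative case, since the orientation of \(\prod\Sigma(A_\mu)\) is taken in the angular order. Hence the two terms cancel and \(d^2(v_{A'})=0\).

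I expect the main obstacle to be this sign and order compatibility: showing that the orientation convention for \(F_{P''}\cong\prod_\nu\Sigma(A''_\nu)\) taken in the angular order is consistent, so that the cellular signs coincide with the Koszul signs of the tensor Leibniz rule. I would handle it by fixing orientations inductively as products in the angular order and checking the single local case of a codimension-two face reachable by two paths.
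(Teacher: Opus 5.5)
Your proposal is correct and is essentially the argument the paper intends: the paper states Proposition~\ref{prop:d-squares-zero} without a separate proof. It relies, as in Proposition~\ref{prop:d2zero-elliptic} and the relative construction of \cite[Section~8]{KKS}, on $\partial^2=0$ in $C_\bullet(\Sigma(A'))$ transported through factorization. Your two extra checks are exactly the details the paper leaves implicit: that a codimension-two face reached by a rooted chain lies only in rooted facets, so dropping the non-rooted coarse subdivisions from $d$ loses no cancelling partner, and that the angular order around $\tilde p$ makes both paths produce the same ordered tensor word. The sign matching you flag is the standard Koszul rule for cellular chains of products, since $|v_{A'}|=-\dim\Sigma(A')$.
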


\medskip
Define
\begin{equation}\label{eq:def-Rinfty}
\overset{\bullet}{R}_{\tp} \;:=\; V_r^*[-1]=\; \bigoplus_{(Q',A')\\\ \operatorname{rooted}}   E_{A'},
\end{equation}
where
\[
E_{A'} \;:=\; \mathrm{or}\bigl(\Sigma(Q',A')\bigr)^*\,[\,-1-\dim\Sigma(Q',A')\,].
\]

This determines an $A_\infty$-algebra
structure on $\overset{\bullet}R_{\tp}$. 
Then we restrict to the \emph{geometric} summands.  Define the graded subspace
\begin{equation}\label{eq:def-Rinfty-geom}
R_{\tp} \;:=\; \bigoplus_{(Q',A')\\\ \operatorname{geom}} E_{A'}
\;\subset\;
\overset{\bullet}{R}_{\tp} .
\end{equation}

We claim that $R_{\tp}$ is an $A_\infty$-subalgebra of
$\overset{\bullet}{R}_{\tp}$.

\begin{proposition}\label{prop:Rinfty-subainfty}
The graded vector space $R_{\tp}$ defined in
\eqref{eq:def-Rinfty-geom} carries a natural $A_\infty$-algebra structure,
obtained by restricting the $A_\infty$ structure on
$\overset{\bullet}{R}_{\tp}$.
\end{proposition}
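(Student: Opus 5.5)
The plan is to mirror the proof of Proposition~\ref{prop:geom-subalg}, now in the associative setting. Under Koszul duality (Section~\ref{sec:ainf-linf-koszul}), the \(A_\infty\)-structure on \(\overset{\bullet}{R}_{\tp}=V_r^*[-1]\) is encoded by the square-zero derivation \(d\) on the tensor algebra \(T^\bullet(V_r)\) from Proposition~\ref{prop:d-squares-zero}. Write
\[
d|_{V_r}=\sum_{k\ge1}d_k,\qquad d_k\colon V_r\to V_r^{\otimes k};
\]
then the operation \(m_k\) is the shifted dual of \(d_k\). Let \(V_{r,\mathrm{geom}}\subset V_r\) be the sum of the \(V_{A'}\) over rooted geometric subpolygons. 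It suffices to show that
\[
d\bigl(V_{r,\mathrm{geom}}\bigr)\subset T^\bullet\bigl(V_{r,\mathrm{geom}}\bigr).
\]
Indeed, \(V_r=V_{r,\mathrm{geom}}\oplus V_{r,\mathrm{ng}}\), and \(R_{\tp}=V_{r,\mathrm{geom}}^*[-1]\) is identified with the annihilator of \(V_{r,\mathrm{ng}}\). The displayed inclusion dualizes to \(m_k(R_{\tp}^{\otimes k})\subset R_{\tp}\).

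Fix a rooted geometric \((Q',A')\), so \(A'=\widetilde A\cap Q'\) and \(\tp\in\operatorname{Vert}(Q')\). Each term of \(d(v_{A'})\) is an ordered tensor \(\pm\, v_{A''_1}\otimes\cdots\otimes v_{A''_k}\) indexed by a coarse rooted subdivision \(P''=\{(Q''_\nu,A''_\nu)\}\) of \((Q',A')\). This is a subdivision in the sense of the definition of polyhedral subdivision, so \(A''_\nu=A'\cap Q''_\nu\). Since \(Q''_\nu\subset Q'\), we get
\[
A''_\nu=(\widetilde A\cap Q')\cap Q''_\nu=\widetilde A\cap Q''_\nu .
\]
Hence each cell is geometric, and it is rooted by the definition of a rooted subdivision. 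Every tensor factor therefore lies in \(V_{r,\mathrm{geom}}\), which proves the inclusion on generators. The Leibniz rule then extends it to all of \(T^\bullet(V_{r,\mathrm{geom}})\).

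The remaining point is to check that restricting \(d\) to the subalgebra does not disturb the \(A_\infty\)-relations. The derivation \(d\) preserves the free subalgebra \(T^\bullet(V_{r,\mathrm{geom}})\), which is a retract of \(T^\bullet(V_r)\), so its restriction still squares to zero. By Koszul duality this restriction is exactly an \(A_\infty\)-structure on \(R_{\tp}\) whose operations are the restrictions of those on \(\overset{\bullet}{R}_{\tp}\). The ordering of the factors is inherited from the ordering of rooted cells seen from \(\tp\), so nothing new is needed there.

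The main obstacle is the step \(A''_\nu=A'\cap Q''_\nu\). One must make sure that the cells of a coarse rooted subdivision really carry all marked points of \(A'\) lying in them, and not a smaller marking, since the definition of rooted subdivision is stated only via \(Q'=\bigcup Q'_i\). I would settle this by observing that coarse rooted subdivisions index codimension-one faces of \(\Sigma(A')\). These faces correspond to regular subdivisions, whose cells are always of the form \(A'\cap Q''_\nu\) by condition (2) of the subdivision definition.
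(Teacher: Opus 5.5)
There is a genuine gap, and it is in the dualization step. Since \(m_k\) is dual to \(d_k\), the cells \((Q''_\nu,A''_\nu)\) are the inputs and \((Q',A')\) is the output. So the closure \(m_k(R_{\tp}^{\otimes k})\subset R_{\tp}\) means the following: for \(\xi_1,\dots,\xi_k\) vanishing on \(V_{r,\mathrm{ng}}\), the functional \(v\mapsto(\xi_1\otimes\cdots\otimes\xi_k)(d_kv)\) vanishes on \(V_{r,\mathrm{ng}}\). This is a condition on \(d(V_{r,\mathrm{ng}})\). Every term of \(d(v_{A'})\) with \(A'\) non-geometric must contain a non-geometric factor; equivalently, geometric input cells force a geometric output cell.

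Your inclusion \(d(V_{r,\mathrm{geom}})\subset T^\bullet(V_{r,\mathrm{geom}})\) is the converse implication, and it dualizes to something else. It says the non-geometric summands form an \(A_\infty\)-ideal. The sub-dg-algebra \(T^\bullet(V_{r,\mathrm{geom}})\subset T^\bullet(V_r)\) encodes the projection \(\overset{\bullet}{R}_{\tp}\to R_{\tp}\), that is, a quotient structure with operations \(\mathrm{pr}\circ m_k|_{R_{\tp}^{\otimes k}}\), not the restricted ones. A toy example shows the difference: take \(dv_2=v_1\), \(dv_1=0\), with only \(v_1\) geometric. Then \(\Bbbk v_1\) is \(d\)-stable, yet \(m_1(v_1^*)=v_2^*\) leaves the annihilator \(\Bbbk v_1^*\) of \(v_2\). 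So your final paragraph assumes exactly what has to be proved.

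Worse, your inclusion fails in the framework the paper actually uses. Coarse rooted subdivisions include the point-deletion faces \((Q',A')\rightsquigarrow(Q',A'\setminus\{a\})\), where \(a\) is a marked point that is not a vertex. These are forced by \(\dim\Sigma(A')=|A'|-3\): a triangle with one interior point has a segment as its secondary polytope. They are also exactly the one-piece subdivisions producing \(m_1\) in Proposition~\ref{prop:Rtp-associative}. For geometric \(A'\), \(d(v_{A'})\) then contains the non-geometric term \(v_{A'\setminus\{a\}}\). So your appeal to condition (2) to settle the ``main obstacle'' does not work: cells of regular subdivisions need not have the form \(A'\cap Q''_\nu\).

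What you need is the paper's argument. Suppose all \((Q''_\nu,A''_\nu)\) are geometric. Any point of \(\widetilde A\cap Q'\) lies in some \(Q''_\nu\), hence in \(A''_\nu\subset A'\), so \(A'=\widetilde A\cap Q'\). This uses only \(Q'=\bigcup_\nu Q''_\nu\) and \(A''_\nu\subset A'\), and point deletions cause no trouble. The proof of Proposition~\ref{prop:geom-subalg} that you mirrored argues in your direction; the paper's proof of the present statement argues in the direction that is actually needed.
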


\begin{proof}

Recall that the $A_\infty$ structure maps
\[
m_k : (\overset{\bullet}{R}_{\tp})^{\otimes k} \longrightarrow
\overset{\bullet}{R}_{\tp}
\]
are obtained by dualizing the tensor differential $d$ on $T(V_r)$.
A nonzero contribution to
\[
m_k\bigl(E_{A_1'},\dots,E_{A_k'}\bigr)
\]
arises precisely from a coarse rooted regular subdivision
\[
(Q',A') \rightsquigarrow \bigl((Q_1',A_1'),\dots,(Q_k',A_k')\bigr),
\]
where each $(Q_i',A_i')$ appears as an input cell and $(Q',A')$ is the output
cell.

If all input cells \((Q'_i,A'_i)\) are geometric, then the output cell
\((Q',A')\) is also geometric. Indeed, every marked point of
\(\widetilde A\cap Q'\) lies in at least one cell \(Q'_i\). Since that cell is
geometric, the point belongs to \(A'_i\), hence to \(A'\). Therefore
\[
A'=\widetilde A\cap Q',
\]
so the output cell is geometric. Consequently, the output summand
$E_{A'}$ lies in $R_{\tp}$.
This shows that the collection $\{E_{A'}\}_{(Q',A')\in\mathcal{R}_{\mathrm{geom}}}$
is closed under all $A_\infty$ operations.

By the discussion above, the $A_\infty$ structure maps
$m_k$ preserve the subspace $R_{\tp}$.  Hence they restrict to maps
\[
m_k : R_{\tp}^{\otimes k} \longrightarrow R_{\tp},
\]
satisfying the $A_\infty$ relations inherited from
$\overset{\bullet}{R}_{\tp}$.  This endows $R_{\tp}$ with the structure of an
$A_\infty$-subalgebra.
\end{proof}

\begin{proposition}
\label{prop:Rtp-associative}
The \(A_\infty\)-algebra \(R_{\tilde p}\) is a graded associative algebra. More
precisely,
\[
m_k|_{R_{\tilde p}^{\otimes k}}=0
\qquad\text{for all } k\neq 2,
\]
and the only nonzero operation is the binary product
\[
m_2:R_{\tilde p}\otimes R_{\tilde p}\longrightarrow R_{\tilde p}.
\]
\end{proposition}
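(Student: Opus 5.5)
The plan is to determine, by a degree count, which coarse rooted subdivisions can contribute matrix coefficients between geometric summands. By construction, \(m_k\) is dual to the component \(d_k\colon V_r\to V_r^{\otimes k}\) of the tensor differential. A nonzero coefficient of \(m_k\) on \(E_{A'_1}\otimes\cdots\otimes E_{A'_k}\) with output \(E_{A'}\) therefore requires a coarse rooted subdivision of \((Q',A')\) into the \(k\) ordered rooted cells \((Q'_i,A'_i)\). The corresponding face of \(\Sigma(A')\) has codimension one, and by Proposition~\ref{prop:factorization} it is identified with \(\prod_i\Sigma(A'_i)\). This gives
\[
\dim\Sigma(A')-1=\sum_{i=1}^k\dim\Sigma(A'_i).
\]
Since \(E_{A'}\) sits in degree \(1+\dim\Sigma(A')\), this identity says that \(m_k\) has degree \(2-k\), as required. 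It does not by itself force \(k=2\), so I will extract the vanishing from the geometry of rooted subdivisions of geometric rooted polygons.

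The main step is to classify coarse rooted subdivisions. In a rooted subdivision every cell contains \(\widetilde p\) as a vertex, so all interior edges of the subdivision emanate from \(\widetilde p\). A rooted subdivision is therefore a fan of cones from \(\widetilde p\), cut along rays \([\widetilde p,a]\) with \(a\in A'\). Because \(\widetilde p\) is chosen sufficiently far from the fundamental domain containing \(\widetilde A^\circ\), the non-root points are linearly ordered by angle as seen from \(\widetilde p\), and no point of \(\widetilde A^\circ\) lies on a segment from \(\widetilde p\) to another point (genericity). I then argue that the rooted subdivisions of \((Q',A')\) are exactly the regular subdivisions obtained by cutting along a set of such rays. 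Two cuts give a refinement of one cut, so coarseness forces exactly one ray and hence exactly two cells. This yields \(m_k=0\) for all \(k\ge 3\).

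For \(k=1\), I note that a coarse subdivision is nontrivial and so has at least two cells, which gives \(m_1=0\). The surviving \(m_2\) is dual to cutting along a single ray, with the two cells ordered by the angular order from \(\widetilde p\). Associativity of \(m_2\) is then the \(k=3\) Stasheff identity with \(m_1=m_3=0\), or equivalently \(d^2=0\) from Proposition~\ref{prop:d-squares-zero}. Concretely, the two ways of inserting two rays give the same codimension-two face with opposite signs.

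I expect the main obstacle to be the rigorous claim that every coarse rooted subdivision of a geometric rooted polygon has exactly two cells. Two points need care. First, a rooted cell must be a convex polygon with \(\widetilde p\) as a vertex, and the union of consecutive cones must still be convex and rooted. Second, points lying in the interior of \(Q'\) but off every ray would allow non-fan coarse subdivisions; these are excluded because every cell must contain \(\widetilde p\). The approach I would try first is to show that any rooted subdivision is determined by its set of interior edges, which are rays through \(\widetilde p\), and that any nonempty subset of these rays again defines a rooted regular subdivision. Regularity can be obtained from a height function that is linear on each sector, chosen via the angular ordering. With this, minimality forces a single ray.
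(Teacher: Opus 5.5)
There is a genuine gap in your treatment of \(k=1\). You assert that a coarse subdivision, being nontrivial, must have at least two cells. That is false for marked polygons. If \(a\in A'\) lies in the interior of \(Q'\), then the one-cell subdivision \(\{(Q',A'\setminus\{a\})\}\) is a nontrivial regular subdivision. It is coarse, because \(\dim\Sigma(A'\setminus\{a\})=\dim\Sigma(A')-1\). The simplest instance is a triangle with one interior point: \(\Sigma\) is a segment, and one of its two vertices is the big triangle with the interior point left unused.

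When \(Q'\) is rooted, this one-cell subdivision is a coarse rooted subdivision. It contributes a unary operation \(m_1\colon E_{A'\setminus\{a\}}\to E_{A'}\) on the larger algebra \(\overset{\bullet}{R}_{\tilde p}\). This is exactly the second of the two types of coarse rooted subdivisions in the paper's proof. The literal condition \(A_i=A\cap Q_i\) in the paper's definition of a polyhedral subdivision cannot be the operative one here: it is incompatible with the face correspondence for \(\Sigma(A')\), and it would make the notion of a geometric subdivision vacuous. For the same reason, your claim that a rooted subdivision is determined by its interior edges fails, since cells are marked polygons whose markings may omit interior points.

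A telltale sign is that your argument never uses that \(R_{\tilde p}\) consists of geometric summands. It would therefore equally prove \(m_1=0\) on \(\overset{\bullet}{R}_{\tilde p}\), which is false. The missing idea is geometricity: the input \((Q',A'\setminus\{a\})\) of a deletion term is never geometric, because \(a\in\widetilde A\cap Q'\). Hence \(m_1\) vanishes on \(R_{\tilde p}\).

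To repair the proposal, classify coarse rooted subdivisions as one of two types. Either it is a single chord from \(\tilde p\), giving two cells and \(m_2\), or it is a single deletion of an interior marked point, giving one cell. Two chords, two deletions, or a chord together with a deletion give a refinement of codimension at least two, so they are not coarse. Then dispose of the deletion type by geometricity. With that addition, your fan argument for \(k\ge 3\) and your derivation of associativity agree with the paper's proof.
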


\begin{proof}
The \(A_\infty\)-operations on \(\dot R_{\tilde p}\) are obtained by dualizing the
tensor differential on \(T^\bullet(V_r)\). A nonzero contribution to \(m_k\) is
indexed by a coarse rooted subdivision of a rooted marked polygon into \(k\)
rooted marked subpolygons.

In dimension two, a coarse rooted subdivision into rooted subpolygons has only
two possible types. This is the same dichotomy of coarse subdivisions used in the relative
two-dimensional construction of \cite[Section 8]{KKS}. Such a subdivision may have two maximal rooted pieces; these subdivisions give the
binary product \(m_2\). Alternatively, it may have one piece, corresponding to the
operation of removing an internal marked point from the marking; these
subdivisions give the unary operation \(m_1\) in the larger algebra
\(\dot R_{\tilde p}\).

However, the one-piece subdivisions do not preserve the geometric subspace.
Indeed, if \((Q',A')\) is geometric, so that \(A'=\widetilde A\cap Q'\), then
removing an internal marked point produces a marked polygon
\((Q',A'\setminus\{a\})\), which is no longer geometric. Hence the unary
operation \(m_1\) vanishes after restricting to the geometric subalgebra
\(R_{\tilde p}\).

Therefore, on \(R_{\tilde p}\), all operations \(m_k\) with \(k\neq 2\) vanish.
The \(A_\infty\)-relations then reduce to the associativity of \(m_2\). Hence
\(R_{\tilde p}\) is a graded associative algebra.
\end{proof}

We can therefore describe the product on \(R_{\tp}\) explicitly, see Figure~\ref{mulinR}.  Let
\((Q_1,A_1)\) and \((Q_2,A_2)\) be geometric rooted marked subpolygons, and let
\[
E_{A_1},E_{A_2}\subset R_{\tp}
\]
be the corresponding summands.  The product
\[
m_2:E_{A_1}\otimes E_{A_2}\longrightarrow R_{\tp}
\]
is zero unless the following conditions hold:
\begin{enumerate}
    \item \(Q_1\cup Q_2\) is again a geometric marked subpolygon;
    \item \(Q_1\cap Q_2\) is a common boundary edge;
    \item with respect to the orientation determined by the stop \(\tp\),
    the polygon \(Q_1\) lies on the left of \(Q_2\).
\end{enumerate}
If these conditions hold, then \(Q:=Q_1\cup Q_2\) is a convex geometric
marked subpolygon, with marked set
\[
A_Q=A_1\cup A_2,
\]
and the product is the canonical gluing map
\[
m_2:E_{A_1}\otimes E_{A_2}\longrightarrow E_{A_Q}
\]
associated to the coarse subdivision
\[
(Q,A_Q)\rightsquigarrow \bigl((Q_1,A_1),(Q_2,A_2)\bigr).
\]
Equivalently, for basis elements we may write
\[
E_{A_1}\cdot E_{A_2}
=
\begin{cases}
\pm E_{A_1\cup A_2}, &
\text{if } Q_1\cup Q_2 \text{ is geometric and }
Q_1 \text{ lies on the left of } Q_2,\\[4pt]
0, & \text{otherwise.}
\end{cases}
\]
The sign is the one induced by the orientation convention for the corresponding
coarse subdivision.

\begin{figure}[h]
\centering
\begin{overpic}[width=0.7\textwidth]{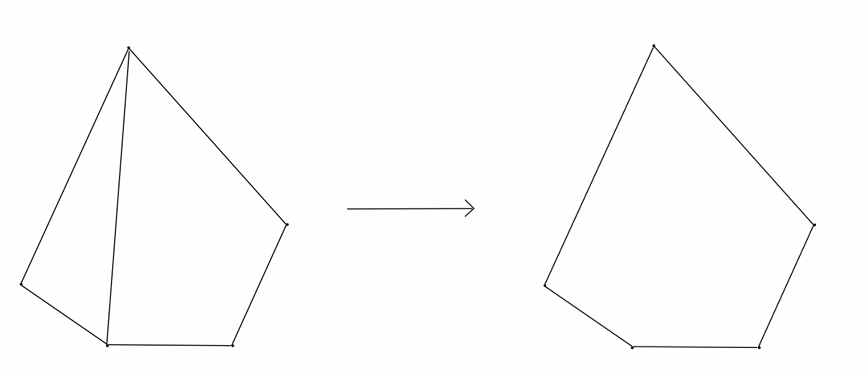}
    \put(14,40){$\tilde p$}
    \put(74,40){$\tilde p$}
    \put(44.5,21){$m_2$}
    \put(7,10){$A_1$}
    \put(19,10){$A_2$}
    \put(78,10){$A_Q$}
\end{overpic}
\caption{$m_2$ in $R_{\tp}$.}
\label{mulinR}
\end{figure}

\subsection{$L_\infty$-morphism}

Now we explain how the subdivision calculus produces a natural
$L_\infty$-morphism from the $L_\infty$-algebra of unrooted polygons to the dg Lie
algebra of derived derivations of the $A_\infty$-algebra $R_{\tp}$ constructed
above.

Consider all coarse subdivisions of all marked subpolygons of $\bigl(\widetilde Q,\widetilde A\bigr),$ both unrooted and rooted. Collecting the corresponding contributions, we obtain an algebra differential on
\[
S^\bullet(V)\otimes T^\bullet(V_{\mathrm{r}}),
\qquad
V=\bigoplus_{\substack{(Q',A')\\ \text{unrooted}}} V_{A'},\quad
V_{\mathrm{r}}
=
\bigoplus_{\substack{(Q',A')\\  \text{rooted}}} V_{A'}.
\]
By construction, this differential preserves the subalgebra \(S^\bullet(V)\), and its restriction to \(S^\bullet(V)\) is precisely the differential defining the \(L_\infty\)-algebra \(\mathfrak g\).
Then Proposition~\ref{koszul-dual}  yields an \(L_\infty\)-morphism
\[
\phi \colon \mathfrak g \longrightarrow C^{\ge 1}(R_{\tp},R_{\tp})[1].
\]
Thus the \(L_\infty\)-algebra coming from the unrooted subpolygons acts by derived derivations on the \(A_\infty\)-algebra defined by the rooted ones.

\subsection{Dependence on the lift of the stop and chamber structure}\label{A-inf-chamber}

The \(A_\infty\)-algebra constructed above, as well as the induced
\(L_\infty\)-morphism to its derived derivation space, depends on the choice of
a lift \(\widetilde p\in \mathbb C\) of the stop \(p\in E\). 
Let $R_{\tilde p}$ denote the corresponding \(A_\infty\)-algebra.
We now explain how this
dependence is organized by the deck group of the universal covering and how it
leads naturally to a chamber structure.

Fix once and for all a lift \(\widetilde A^\circ\subset \mathbb C\) of the configuration
\(A\), contained in a chosen fundamental domain. The set of lifts of the stop is
then
\[
\pi^{-1}(p)=\widetilde p+\Lambda,
\qquad \Lambda\cong \pi_1(E),
\]
so that the deck group acts freely and transitively on the possible choices of
\(\widetilde p\). 

Although the set of lifts is discrete, when \(\widetilde p\) is taken far away
from the fixed domain containing \(\widetilde A^\circ\), the relevant ordering data is
determined only by the asymptotic direction of \(\widetilde p\). More precisely,
after choosing a base point \(c\) in the fixed domain, one associates to
\(\widetilde p\) the unit vector
\[
u(\widetilde p):=\frac{\widetilde p-c}{\|\widetilde p-c\|}\in S^1.
\]
For \(\widetilde p\) sufficiently distant, the induced order on \(\widetilde A\)
depends only on \(u(\widetilde p)\), and is locally constant as a function of
this direction.

The wall set is
\[
\mathcal W:=\left\{
\pm\frac{\widetilde a_i-\widetilde a_j}{\|\widetilde a_i-\widetilde a_j\|}
\;\middle|\;
\widetilde a_i,\widetilde a_j\in \widetilde A^\circ,\ i\neq j
\right\}\subset S^1,
\]
and a chamber is a connected component of \(S^1\setminus \mathcal W\).

The order changes only when \(u(\widetilde p)\) crosses one of finitely many
walls. These walls are determined by pairs of points of \(\widetilde A\): they
are the directions for which two points of \(\widetilde A\) become aligned as
seen from infinity, equivalently the directions parallel to differences
\(\widetilde a_i-\widetilde a_j\). Thus \(S^1\) is decomposed into finitely many
open chambers, and the induced ordering on \(\widetilde A\) is constant on each
chamber.

It follows that the rooted \(A_\infty\)-algebra and the associated
\(L_\infty\)-morphism are constant within a fixed chamber. More precisely, if
\(\widetilde p\) and \(\widetilde p'\) determine the same chamber, then they
induce the same ordering on all rooted subpolygons, hence the same ordered
tensor embeddings used in the definition of the algebra differential on
\[
S^\bullet(V)\otimes T^\bullet(V_{r}).
\]
Therefore the corresponding \(A_\infty\)-algebras are naturally identified after
identifying the ordered combinatorial data, and under this identification the
associated \(L_\infty\)-morphisms agree.

When \(\widetilde p\) crosses a wall, the induced order changes, typically by an
adjacent transposition. As a result, the corresponding ordered tensor
decompositions of rooted subpolygons change as well. Hence the resulting
\(A_\infty\)-algebra need not remain strictly isomorphic to the one on the other
side of the wall, and the associated \(L_\infty\)-morphism also changes. In this
way, the family of rooted \(A_\infty\)-algebras obtained from different lifts of
\(p\) is naturally organized by a wall-crossing structure on the circle of
directions at infinity.

From this point of view, the deck action of \(\pi_1(E)\) on the set of lifts of
the stop does not simply act by automorphisms of a single fixed
\(A_\infty\)-algebra. Rather, it produces a collection of \(A_\infty\)-algebras,
together with corresponding \(L_\infty\)-morphisms, indexed by chambers of
asymptotic directions. The passage from one chamber to another should be
understood as a wall-crossing transformation.

\begin{proposition}\label{indenpend-chamber}
Let \(C\subset S^1\setminus \mathcal W\) be a chamber. For any two sufficiently distant lifts
\(\widetilde p\) and \(\widetilde p'\) of the stop \(p\), let
\[
(\widetilde Q,\widetilde A),\qquad (\widetilde Q',\widetilde A')
\]
be the corresponding marked polygons, where \(\widetilde A\) and \(\widetilde A'\) include
\(\widetilde p\) and \(\widetilde p'\), respectively. If
\[
u(\widetilde p),\,u(\widetilde p')\in C,
\]
then the induced counterclockwise orderings on \(\widetilde A^\circ\) and
\((\widetilde A')^\circ\), and more generally on the non-root vertices of every rooted
subpolygon of \((\widetilde Q,\widetilde A)\) and \((\widetilde Q',\widetilde A')\), coincide, see Figure~\ref{prop5.7}.
Consequently, the corresponding rooted \(A_\infty\)-algebras $R_{\tilde p}$ and $R_{\tilde p'}$ are naturally isomorphic after identifying the ordered combinatorial data, and the
associated \(L_\infty\)-morphisms
\[
\Phi_{\widetilde p}
\colon
\mathfrak g\longrightarrow  C^{\ge 1}(R_{\tp},R_{\tp})[1]
\]
and 
\[
\Phi_{\widetilde p'}
\colon
\mathfrak g\longrightarrow C^{\ge 1}(R_{\tp'},R_{\tp'})[1]
\]
also agree.

\begin{figure}[h]
\centering
\begin{overpic}[width=0.7\textwidth]{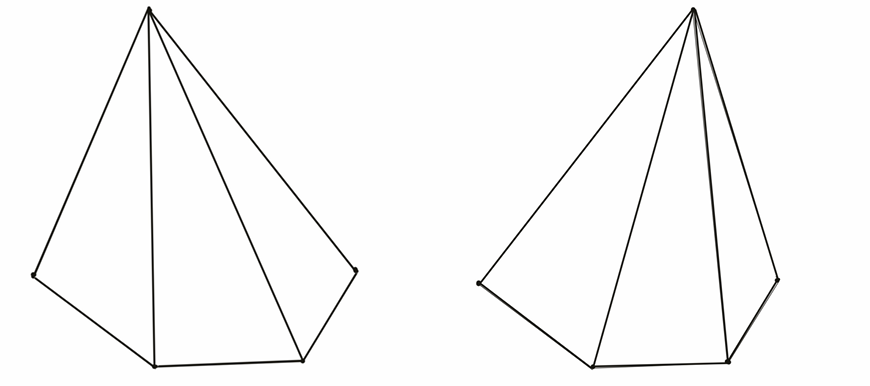}
    \put(17,45){$\tilde p$}
    \put(80,45){$\tilde p'$}
\end{overpic}
\caption{Different lifts of $p$.}
\label{prop5.7}
\end{figure}
\end{proposition}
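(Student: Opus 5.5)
The plan is to reduce everything to a statement about angular orderings seen from a far-away point, and then to observe that all the algebraic data are functorial in the ordered combinatorial data.

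\textbf{Step 1: the order of two fixed points seen from $\widetilde p$.} Fix the base point $c$ in the fundamental domain containing $\widetilde A^\circ$. For two distinct points $\widetilde a_i,\widetilde a_j\in\widetilde A^\circ$, their relative counterclockwise order as seen from $\widetilde p$ is governed by the sign of the determinant
\[
\det(\widetilde a_i-\widetilde p,\ \widetilde a_j-\widetilde p).
\]
Write $\widetilde p=c+Ru$ with $u=u(\widetilde p)$ and $R=\|\widetilde p-c\|$, and expand in $R$:
\[
\det(\widetilde a_i-\widetilde p,\widetilde a_j-\widetilde p)=\det(\widetilde a_i-c,\widetilde a_j-c)+R\,\det(u,\widetilde a_j-\widetilde a_i).
\]
If $u\notin\mathcal W$, the coefficient $\det(u,\widetilde a_j-\widetilde a_i)$ is nonzero. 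Hence for $R$ large the sign of the determinant equals the sign of $\det(u,\widetilde a_j-\widetilde a_i)$.

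\textbf{Step 2: uniformity over the chamber.} The function $u\mapsto\operatorname{sign}\det(u,\widetilde a_j-\widetilde a_i)$ is constant on each connected component of $S^1\setminus\mathcal W$, since it changes sign only at directions parallel to $\widetilde a_j-\widetilde a_i$. This is the main obstacle: making ``sufficiently distant'' uniform over the chamber. Near the walls, $|\det(u,\widetilde a_j-\widetilde a_i)|$ tends to $0$, so a single radius $R$ cannot serve the whole open chamber. I would handle this by making ``sufficiently distant'' depend on the direction. Concretely, require $R>R_0(u)$, where
\[
R_0(u)=\max_{i\neq j}\frac{|\det(\widetilde a_i-c,\widetilde a_j-c)|}{|\det(u,\widetilde a_j-\widetilde a_i)|}.
\]
With this convention, each lift in question has its ordering equal to the chamber ordering determined by the signs $\operatorname{sign}\det(u,\widetilde a_j-\widetilde a_i)$. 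Since these signs are constant on $C$, the total orders on $\widetilde A^\circ$ induced by $\widetilde p$ and $\widetilde p'$ coincide.

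\textbf{Step 3: rooted subpolygons.} A rooted subpolygon of either configuration has root $\widetilde p$ (respectively $\widetilde p'$), and its non-root vertices form a subset of $\widetilde A^\circ$. The induced order on them is therefore the restriction of the order on $\widetilde A^\circ$, so these orders agree as well. I would also check that the bijection $\widetilde p\leftrightarrow\widetilde p'$ identifies the rooted geometric subpolygons, their coarse rooted subdivisions, and the ordered lists of cells. For this I would use that, for far-away $\widetilde p$, convexity and emptiness of a rooted polygon $\operatorname{Conv}(\{\widetilde p\}\cup B)$ are determined by sign conditions of the same determinant type. These conditions are the orientation of triples involving $\widetilde p$, which are fixed by the chamber exactly as in Step 1, together with triples inside $\widetilde A^\circ$, which do not depend on the lift at all. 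So the same enlarged $R_0(u)$ handles them.

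\textbf{Step 4: conclusion.} The algebra differential on $S^\bullet(V)\otimes T^\bullet(V_r)$ is built only from three ingredients: the coarse subdivisions, their orientation signs on faces of the secondary polytopes, and the ordered tensor embeddings $V_{P''}\subset T^\bullet(V_r)$. Under the identification of Step 3, all three correspond. The orientation signs correspond because the face lattices of the corresponding secondary polytopes are identified with compatible orientations; they are moreover deformation-equivalent as $\widetilde p$ moves within the chamber, since no degeneracy occurs. Hence the two differentials correspond. By Proposition~\ref{koszul-dual}, the corresponding $A_\infty$-algebras $R_{\widetilde p}$ and $R_{\widetilde p'}$ are isomorphic, and the $L_\infty$-morphisms $\Phi_{\widetilde p}$ and $\Phi_{\widetilde p'}$ agree under this isomorphism.
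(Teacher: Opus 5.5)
Your proposal is correct at the level of the paper's argument and follows the same route: for a far lift the order on $\widetilde A^\circ$ is fixed by the chamber containing $u(\widetilde p)$, and rooted subpolygons inherit this order. Hence the ordered data defining the differential on $S^\bullet(V)\otimes T^\bullet(V_r)$, and so $R_{\widetilde p}$ and $\Phi_{\widetilde p}$, match those for $\widetilde p'$; your determinant expansion (whose linear term is really $R\det(u,\widetilde a_i-\widetilde a_j)$, a harmless sign slip) and the direction-dependent radius $R_0(u)$ make the paper's ``sufficiently distant'' precise. One caution: matching the coarse subdivisions themselves, especially the mixed rooted/unrooted ones that define $\Phi$, does not follow from orientation signs alone, because regularity of a subdivision is not an oriented-matroid invariant, so that part of your Steps~3--4 is heuristic; the paper does not argue it either and simply absorbs it into ``identifying the ordered combinatorial data''.
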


\begin{proof}
For \(\widetilde p\) sufficiently far from the fixed domain containing the lifts of the
original points \(A\), the counterclockwise order on \(\widetilde A^\circ\) as seen from
\(\widetilde p\) depends only on the direction \(u(\widetilde p)\). This order changes
exactly when two points of \(\widetilde A^\circ\) become aligned as viewed from infinity,
that is, when \(u(\widetilde p)\) is parallel to \(\widetilde a_i-\widetilde a_j\) for some
\(i\neq j\). These exceptional directions are precisely the walls in \(\mathcal W\). Hence
the induced order is constant on each chamber \(C\).

The same holds for every rooted subpolygon, since its ordering is induced from the order on
the non-root vertices. Therefore, whenever \(u(\widetilde p)\) and \(u(\widetilde p')\)
belong to the same chamber, all ordered tensor embeddings used in the definition of the
algebra differential on
\[
S^\bullet(V)\otimes T^\bullet(V_{r})
\]
are the same for \((\widetilde Q,\widetilde A)\) and \((\widetilde Q',\widetilde A')\).
It follows that the resulting rooted \(A_\infty\)-algebras are naturally identified
after identifying the ordered combinatorial data. Under this identification, the
associated \(L_\infty\)-morphisms agree.
\end{proof}

For later use, we introduce notation for the chamber determined by a fixed
ordering of the lifted configuration.  Let $S=(\widetilde a_1,\ldots,\widetilde a_n)$ be an ordered collection of points in the fixed lifted configuration
\(\widetilde A^\circ\).  We say that a chamber
\[
\mathfrak C\subset S^1\setminus \mathcal W
\]
is \emph{compatible with \(S\)} if, for any sufficiently distant lift
\(\widetilde p\) with $u(\widetilde p)\in \mathfrak C,$ the order on \(\widetilde A^\circ\) induced by viewing the points from
\(\widetilde p\) agrees with the fixed order $\widetilde a_1<\widetilde a_2<\cdots<\widetilde a_n $.
When such a compatible chamber is fixed, we denote it by $\mathfrak C_S$ .

The chamber \(\mathfrak C_S\) determines the rooted directed
\(A_\infty\)-algebra constructed above.  To emphasize the dependence on this
choice of chamber, we write
\[
R_{\mathfrak C_S}
\]
for the corresponding chamberwise \(A_\infty\)-algebra.  Equivalently, if
\(\widetilde p\) is any sufficiently distant lift with
\(u(\widetilde p)\in\mathfrak C_S\), then
\[
R_{C_S}:=R_{\tilde p}.
\]
By Proposition~\ref{indenpend-chamber}, this notation is independent of the choice
of such \(\widetilde p\) inside the chamber.

Similarly, we denote the associated chamberwise \(L_\infty\)-morphism by
\[
\Phi_{\mathfrak C_S}\colon
\mathfrak g
\longrightarrow
R\mathrm{Der}\bigl(R_{\mathfrak C_S}\bigr).
\]

\section{Algebras with coefficients}\label{coeffecient-system}
We introduce a coefficient system on the lifted marked polygon and use it to refine the \(L_\infty\)-algebra and \(A_\infty\)-algebra constructed above.

\subsection{Coefficient systems and factorization sheaves}

Fix a stop \(p\in E\), choose a sufficiently distant lift \(\widetilde p\), and let \((\widetilde Q,\widetilde A)\) denote the corresponding lifted marked polygon, where \(\widetilde A\) now includes \(\widetilde p\).
We assume throughout that \(\widetilde Q\) is endowed with the orientation induced from the standard orientation of \(\mathbb C\cong \mathbb R^2\).

A \emph{system of coefficients} on \((\widetilde Q,\widetilde A)\) consists of the following data: for each oriented geodesic edge $\sigma=[\widetilde a_i,\widetilde a_j]$ with endpoints in \(\widetilde A\), a cochain complex \(N_\sigma\), together with an identification $N_{\bar\sigma}\simeq N_\sigma^*,$ where \(\bar\sigma\) denotes the same edge with the opposite orientation.

For every unrooted marked subpolygon \((\widetilde Q',\widetilde A')\subset (\widetilde Q,\widetilde A)\), we define
\[
N_{\widetilde A'}:=\bigotimes_{\sigma\subset \partial \widetilde Q'} N_\sigma,
\]
where the tensor product runs over the oriented boundary edges of \(\widetilde Q'\), taken in the induced boundary orientation.

More generally, if $\mathcal P=\{(\widetilde Q_\nu,\widetilde A_\nu)\}$ is a coarse subdivision of \((\widetilde Q',\widetilde A')\) into marked subpolygons, we set
\[
N_{\mathcal P}:=\bigotimes_\nu N_{\widetilde A_\nu}.
\]
If \(\mathcal P'\) is a refinement of \(\mathcal P\), then every internal edge of \(\mathcal P'\) appears twice with opposite orientations. Using the duality $N_{\bar\sigma}\simeq N_\sigma^*$ and the evaluation pairing $N_\sigma\otimes N_{\bar\sigma}\longrightarrow k,$ we obtain a morphism of cochain complexes
\[
\gamma_{\mathcal P'\mathcal P}\colon N_{\mathcal P'}\longrightarrow N_{\mathcal P}.
\]

These maps are transitive with respect to chains of refinements. Consequently, the complexes \(N_{\mathcal P}\), together with the maps \(\gamma_{\mathcal P'\mathcal P}\), define a constructible complex of sheaves on the secondary polytope \(\Sigma(\widetilde A')\), constant on each open face corresponding to a coarse subdivision. We denote this sheaf by $\mathcal N_{\widetilde A'}$.

The sheaves \(\mathcal N_{\widetilde A'}\) satisfy the same factorization property as in the classical situation. Namely, if \(\mathcal P=\{(\widetilde Q_\nu,\widetilde A_\nu)\}\) is a coarse subdivision of \((\widetilde Q',\widetilde A')\), then the corresponding face of \(\Sigma(\widetilde A')\) is identified with the product
\[
F_{\mathcal P}\simeq \prod_\nu \Sigma(\widetilde A_\nu),
\]
and the restriction of \(\mathcal N_{\widetilde A'}\) to this face is identified with the exterior tensor product
\[
\mathcal N_{\widetilde A'}\big|_{F_{\mathcal P}}
\;\simeq\;
\boxtimes_\nu \mathcal N_{\widetilde A_\nu}.
\]

Thus the collection of sheaves \(\{\mathcal N_{\widetilde A'}\}\) forms a factorizing system on the family of secondary polytopes attached to marked subpolygons of \((\widetilde Q,\widetilde A)\).

\medskip

We now form the cellular cochain complexes of the factorizing sheaves \(\mathcal N_{\widetilde A'}\). For each unrooted marked subpolygon \((\widetilde Q',\widetilde A')\subset (\widetilde Q,\widetilde A)\), let
\[
E_{\widetilde A'}
:=
N_{\widetilde A'}\otimes \operatorname{or}\bigl(\Sigma(\widetilde A')\bigr)
\bigl[-\dim \Sigma(\widetilde A')-1\bigr].
\]
Define
\[
\mathfrak g_{\mathcal N}^{\bullet}
:=
\bigoplus_{(\widetilde Q',\widetilde A')\subset (\widetilde Q,\widetilde A)}
E_{\widetilde A'}.
\]
The cellular cochain differentials of the sheaves \(\mathcal N_{\widetilde A'}\), together with the factorization property above, assemble into a coderivation
\[
d\colon S^\bullet\!\bigl(\mathfrak g_{\mathcal N}^{\bullet}[1]\bigr)\longrightarrow
S^\bullet\!\bigl(\mathfrak g_{\mathcal N}^{\bullet}[1]\bigr),
\qquad d^2=0.
\]
Equivalently, \(\mathfrak g_{\mathcal N}^{\bullet}\) carries a natural \(L_\infty\)-algebra structure.

As in the coefficient-free case, one may restrict to those summands indexed by geometric unrooted subpolygons of \((\widetilde Q,\widetilde A)\). We denote the resulting \(L_\infty\)-subalgebra by
\[
\mathfrak g_{\mathcal N}\subset \mathfrak g_{\mathcal N}^{\bullet}.
\]

The degree-one component of \(\mathfrak g_{\mathcal N}\) is therefore
\[
\mathfrak g_{\mathcal N}^1
=
\bigoplus_{(\widetilde Q',\widetilde A')\subset (\widetilde Q,\widetilde A)}
N_{\widetilde A'}^{-\dim\Sigma(\widetilde A')}
\otimes \operatorname{or}\bigl(\Sigma(\widetilde A')\bigr).
\]
In particular, a Maurer--Cartan element in \(\mathfrak g_{\mathcal N}\) assigns to each marked triangle an element of \(N_{\widetilde A'}^0\), to each circuit an element of \(N_{\widetilde A'}^{-1}\), and so on.

\medskip

We now pass to the relative setting. 
Let $\mathfrak g_{\widetilde{\mathcal N}}$ be the $L_\infty$-algebra associated to all rooted and unrooted subpolygons  of \((\widetilde Q,\widetilde A)\).
The decomposition into rooted and unrooted marked subpolygons induces a decomposition
\[
\mathfrak g_{\widetilde{\mathcal N}}
=
\mathfrak g_{\mathcal N}\ltimes \mathfrak g_{\mathcal N,\mathrm{root}},
\]
where \(\mathfrak g_{\mathcal N}\) is spanned by the summands corresponding to unrooted subpolygons and \(\mathfrak g_{\mathcal N,\mathrm{root}}\) is spanned by the summands corresponding to rooted subpolygons. Exactly as in the coefficient-free case, \(\mathfrak g_{\mathcal N,\mathrm{root}}\) is an \(L_\infty\)-ideal.

The choice of a sufficiently distant lift \(\widetilde p\) induces a counterclockwise ordering on the non-root vertices of every rooted subpolygon. This allows one to lift the rooted part from an \(L_\infty\)-algebra to an \(A_\infty\)-algebra. We denote the resulting \(A_\infty\)-algebra by $R_{\mathcal N,\mathrm{root}}.$

Looking at all coarse subdivisions of all marked subpolygons of \((\widetilde Q,\widetilde A)\), both unrooted and rooted, we obtain an algebra differential on
\[
S^\bullet(V_{\mathcal N})\otimes T^\bullet(V_{\mathcal N,\mathrm{root}}),
\]
where \(V_{\mathcal N}\) and \(V_{\mathcal N,\mathrm{root}}\) are the graded vector spaces underlying \(\mathfrak g_{\mathcal N}[1]\) and \(R_{\mathcal N,\mathrm{root}}[1]\), respectively. This differential preserves \(S^\bullet(V_{\mathcal N})\), and its restriction there is precisely the differential defining \(\mathfrak g_{\mathcal N}\). Therefore, by Proposition~\ref{koszul-dual}, we obtain an \(L_\infty\)-morphism
\[
\Phi_{\mathcal N}\colon
\mathfrak g_{\mathcal N}\longrightarrow
R\mathrm{Der}(R_{\mathcal N,\mathrm{root}}).
\]

This is the coefficient-enhanced version of the \(L_\infty\)-morphism constructed in the previous section.

\subsection{Bimodule coefficients}

Now we enrich the coefficient-free constructions of the previous sections by allowing dg-algebras at vertices and bimodules along edges.

Let us denote the elements of \(\widetilde A\) by \(i,j,k,\dots\).

\begin{definition}\label{def:extended-coeff}
An \emph{extended system of coefficients} on the configuration $A\subset E$
consists of the following data:
\begin{enumerate}[label=(\arabic*)]
\item For each $i\in A$, an associative dg-algebra $S_i$.
\item For each ordered pair $(i,j)$ with $i,j\in A$, a differential graded
$(S_i,S_j)$-bimodule $N_{ij}$, assumed \emph{projective of finite rank} over the
graded algebra underlying $S_i\otimes_\bk S_j^{op}$.
\item For each pair $(i,j)$, a pairing
\[
\beta_{ij}:\; N_{ij}\otimes_\bk N_{ji}\longrightarrow S_i\otimes_\bk S_j
\]
which is a morphism of $(S_i\otimes_\bk S_j,\; S_i\otimes_\bk S_j)$-bimodules.
\item (\emph{Non-degeneracy}) The induced morphism of $(S_i,S_j)$-bimodules
\[
\beta^{t}_{ij}:\; N_{ij}\longrightarrow
\mathrm{Hom}_{S_j\otimes_\bk S_i^{op}}\!\bigl(N_{ji},\, S_j\otimes_\bk S_i^{op}\bigr)
\]
is an isomorphism.
\item (\emph{Symmetry}) The diagram
\[
\begin{CD}
N_{ij}\otimes_\bk N_{ji} @>{\beta_{ij}}>> S_i\otimes_\bk S_j \\
@V{\mathrm{perm}}VV @VV{\mathrm{perm}}V \\
N_{ji}\otimes_\bk N_{ij} @>{\beta_{ji}}>> S_j\otimes_\bk S_i
\end{CD}
\]
is commutative, where $\mathrm{perm}$ denotes the permutation of tensor factors.
\end{enumerate}

\end{definition}

\begin{remark}\label{rem:trivial-coeff}
If all $S_i=\bk$ and all $N_{ij}$ are one-dimensional with $\beta_{ij}$ the
standard evaluation pairing, then we recover the coefficient-free setting.
\end{remark}

Let \((Q',A')\subset (\widetilde Q,\widetilde A)\) be an unrooted marked subpolygon. Write its boundary vertices in counterclockwise order as
$i_0,i_1,\dots,i_m.$
Define first the \emph{linear tensor product}

\begin{equation}\label{eq:LQprime}
L_{A'} \;:=\;
N_{i_0 i_1}\otimes_{S_{i_1}} N_{i_1 i_2}\otimes_{S_{i_2}}\cdots
\otimes_{S_{i_m}} N_{i_{m-1} i_m},
\end{equation}
which is an $(S_{i_0},S_{i_m})$-bimodule.

Then define the \emph{cyclic tensor product}
\begin{equation}\label{eq:NQprime}
N_{A'} \;:=\; L_{A'} \otimes_{\,S_{i_0}^{op}\otimes_\bk S_{i_m}} N_{i_m i_0}.
\end{equation}
Let $\sigma=(\dots,h,i,j,k,\dots)$ and $\tau=(\dots,p,j,i,q,\dots)$ be two closed
oriented edge paths (cyclic words in vertices of $A'$)
which share the opposite oriented edges $(i,j)$ in $\sigma$ and $(j,i)$ in $\tau$.
Concatenating along $[i,j]$ means erasing these two edges and forming a new
closed path $\sigma *_{[i,j]}\tau$.

The pairing $\beta_{ij}$ produces a \emph{concatenation map}
\begin{equation}\label{eq:concat-map}
\gamma^{[i,j]}_{\sigma,\tau}:\;
N_\sigma\otimes_\bk N_\tau \longrightarrow N_{\sigma *_{[i,j]}\tau},
\end{equation}
defined on decomposable tensors as follows, see Figure~\ref{fig:cont}.

\begin{figure}[h]
\centering
\begin{overpic}[width=0.35\textwidth]{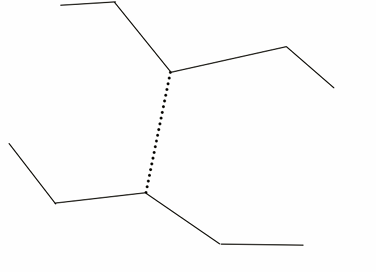}
    \put(37,13){$i$}
    \put(40,50){$j$}
    \put(57,0){$q$}
    \put(80,60){$p$}
    \put(20,40){$\sigma$}
    \put(60,30){$\tau$}
    \put(15,10){$h$}
    \put(27,73){$k$}
    
\end{overpic}
\caption{The concatenation map.}
\label{fig:cont}
\end{figure}

Write
\[
n_\sigma=\cdots\otimes n_{hi}\otimes n_{ij}\otimes n_{jk}\otimes\cdots\in N_\sigma,
\qquad
n_\tau=\cdots\otimes n_{pj}\otimes n_{ji}\otimes n_{iq}\otimes\cdots\in N_\tau,
\]
and expand
\[
\beta_{ij}(n_{ij}\otimes n_{ji})=\sum_\nu s'_\nu\otimes s''_\nu,
\qquad s'_\nu\in S_i,\; s''_\nu\in S_j.
\]
Then
\begin{equation}\label{eq:concat-formula}
\gamma^{[i,j]}_{\sigma,\tau}(n_\sigma\otimes n_\tau)
=\sum_\nu
\bigl(\cdots\otimes (n_{pj}\cdot s''_\nu)\otimes n_{jk}\otimes\cdots\otimes
(n_{hi}\cdot s'_\nu)\otimes n_{iq}\otimes\cdots\bigr),
\end{equation}
where the dots indicate the unchanged tensor factors, and the multiplications are
the right $S_j$-action on $N_{pj}$ and the right $S_i$-action on $N_{hi}$
(respectively; equivalently one may rewrite using the left actions depending on
conventions). The symmetry axiom in Definition~\ref{def:extended-coeff} ensures
this is compatible with swapping the roles of $(i,j)$ and $(j,i)$.

\medskip

Now let
\[
\mathcal P=\{(Q''_\nu,A''_\nu)\}
\]
be a subdivision of a marked subpolygon \((Q',A')\). 
We define
\[
N_{\mathcal P}
:=
\bigotimes_{\nu\ \text{}} N_{A''_\nu}.
\]
Each internal edge of the subdivision gives rise to a concatenation map of the
form \eqref{eq:concat-map}, and these maps commute with each other (so the order
of applying them does not matter). Applying all of them yields a canonical
\emph{composition map}
\begin{equation}\label{eq:gammaP}
\gamma_\mathcal P:\; N_\mathcal P \longrightarrow N_{A'}.
\end{equation}
Whenever \(\mathcal P'\) refines \(\mathcal P\), the pairings \(\beta_{ij}\) along the intermediate edges define contraction maps
\[
\gamma_{\mathcal P'\mathcal P}\colon N_{\mathcal P'}\longrightarrow N_{\mathcal P}.
\]
These maps are compatible with iterated refinements and therefore define a constructible complex of sheaves $\mathcal N_{A'}$ on the secondary
polytope $\Sigma(A')$, whose stalk at the face corresponding to a subdivision
$\mathcal P$ is $N_{\mathcal P}$ and whose restriction maps are the maps $\gamma_{\mathcal P'\mathcal P}$
constructed from the pairings $\beta_{ij}$.

For each unrooted marked subpolygon \((Q',A')\), let
\[
E_{A'}
:=
N_{A'}\otimes \operatorname{or}(\Sigma(A'))
[-\dim \Sigma(A')-1].
\]

\medskip

In the relative setting, for each rooted marked subpolygon \((Q',A')\), with vertices $\tilde p, i_0, i_1,\cdots, i_m$ in counterclockwise order, we define similarly
\[
F_{A'}
:=
L_{A'}\otimes \operatorname{or}(\Sigma(A'))
[-\dim \Sigma(A')-1].
\]

We then set
\[
\mathfrak g_{\mathcal N}
:=
\bigoplus_{\substack{(Q',A')\subset (\widetilde Q,\widetilde A)\\ \text{unrooted}}}
E_{A'},
\qquad
R_{\mathcal N,\mathrm{root}}
:=
\bigoplus_{\substack{(Q',A')\subset (\widetilde Q,\widetilde A)\\ \text{rooted}}}
F_{A'}.
\]

The factorization maps \(\gamma_{\mathcal P'\mathcal P}\), together with the cellular differentials of the secondary polytopes, assemble into an algebra differential on
\[
S^\bullet(\mathfrak g_{\mathcal N}[1])\otimes
T^\bullet(R_{\mathcal N,\mathrm{root}}[1]).
\]
Its restriction to $S^\bullet(\mathfrak g_{\mathcal N}[1])$
defines an \(L_\infty\)-structure on \(\mathfrak g_{\mathcal N}\), while the rooted part carries an \(A_\infty\)-structure on \(R_{\mathcal N,\mathrm{root}}\). By the formal argument used in Proposition~\ref{koszul-dual}, this differential also determines an \(L_\infty\)-morphism
\[
\Phi_{\mathcal N}\colon
\mathfrak g_{\mathcal N}\longrightarrow
R\mathrm{Der}(R_{\mathcal N,\mathrm{root}}).
\]

\begin{proposition}
The extended system of coefficients on \((\widetilde Q,\widetilde A)\) gives rise to:
\begin{enumerate}
    \item an \(L_\infty\)-algebra
    \[
    \mathfrak g_{\mathcal N}
    =
    \bigoplus_{\substack{(Q',A')\subset (\widetilde Q,\widetilde A)\\ \mathrm{unrooted}}}
    E_{A'};
    \]
    \item an \(A_\infty\)-algebra
    \[
    R_{\mathcal N,\mathrm{root}}
    =
    \bigoplus_{\substack{(Q',A')\subset (\widetilde Q,\widetilde A)\\ \mathrm{rooted}}}
    F_{A'};
    \]
    \item an \(L_\infty\)-morphism
    \[
    \Phi_{\mathcal N}\colon
    \mathfrak g_{\mathcal N}\longrightarrow
    R\mathrm{Der}(R_{\mathcal N,\mathrm{root}}).
    \]
\end{enumerate}
\end{proposition}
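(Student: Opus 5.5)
The whole statement reduces to one fact: there is a square-zero algebra differential $D$ on
\[
\mathcal A:=S^\bullet(\mathfrak g_{\mathcal N}[1])\otimes T^\bullet(R_{\mathcal N,\mathrm{root}}[1])
\]
that preserves the two augmentation ideals. Once this is established, Proposition~\ref{koszul-dual} produces (1), (2) and (3) simultaneously. The restriction of $D$ to the symmetric factor gives the $L_\infty$-structure on $\mathfrak g_{\mathcal N}$. The projection of the restriction to the tensor factor gives the $A_\infty$-structure on $R_{\mathcal N,\mathrm{root}}$. The mixed components $d_{n,m}$ dualize to $\Phi_{\mathcal N}$.

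I would define $D$ on generators exactly as in the coefficient-free case. The component of $D$ on the summand for $(Q',A')$ is a sum over coarse subdivisions $\mathcal P$ of $(Q',A')$. It combines the sign $\varepsilon(\mathcal P)$ of the cellular boundary of $\Sigma(A')$ with the dual of the composition map $\gamma_{\mathcal P}\colon N_{\mathcal P}\to N_{A'}$ of \eqref{eq:gammaP}. The finite-rank projectivity and non-degeneracy axioms of Definition~\ref{def:extended-coeff} make this dualization legitimate, since they identify duals of tensor products over the $S_i$ with tensor products of duals. For a rooted $(Q',A')$ the cells of $\mathcal P$ split into two kinds. Rooted cells contribute linear factors $L_{A''_\nu}$, placed in the tensor factor in the counterclockwise order seen from $\widetilde p$, as in Section~\ref{sec:ainf-elliptic-notation}. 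Unrooted cells contribute cyclic factors $N_{A''_\nu}$ in the symmetric factor. The relevant gluings are concatenations $\gamma^{[i,j]}$ along the internal edges. For an unrooted $(Q',A')$ every cell is unrooted, so $D$ maps the symmetric generators into $S^\bullet$; this gives the first ideal condition. For a rooted polygon, every coarse subdivision contains at least one rooted cell, because the vertex $\widetilde p$ lies in some cell. So every term has positive tensor degree, which gives the second ideal condition.

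The main step is $D^2=0$. Applying $D^2$ to a generator gives a sum over two-step refinements $\mathcal P'\preceq\mathcal P$. Each codimension-two face of $\Sigma(A')$ is reached along exactly two such paths, with opposite cellular signs, as in the proof of Proposition~\ref{prop:d2zero-elliptic}. For the two terms to cancel, the two coefficient maps $N_{\mathcal P'}\to N_{A'}$ must agree. This is the transitivity $\gamma_{\mathcal P}\circ\gamma_{\mathcal P'\mathcal P}=\gamma_{\mathcal P'}$, which in turn rests on the claim that the concatenation maps along distinct internal edges commute. Equivalently, the $\mathcal N_{A'}$ form a factorizing constructible sheaf, so $D$ is the cellular differential of $\mathcal N_{A'}$ and $D^2=0$ follows from the sheaf chain complex.

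I expect the commutation of concatenations to be the main obstacle. When two internal edges meet at a vertex $i$, both use the $S_i$-bimodule structure. Commutation then requires that $\beta_{ij}$ be a bimodule map (axiom (3)) and that the edge order be immaterial (symmetry axiom (5)), together with careful Koszul signs from the degree shifts $[-\dim\Sigma-1]$. I would first verify the local identity on decomposable tensors for two edges sharing a vertex, and then argue that this local case suffices. In the rooted case there is a further point to check: the ordering of tensor factors must be compatible along both refinement paths. This holds because the counterclockwise order from $\widetilde p$ restricts consistently to every rooted subpolygon, as in Proposition~\ref{indenpend-chamber}.
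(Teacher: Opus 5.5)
Your proposal is correct and follows the same route as the paper. The paper's proof also builds the total differential on $S^\bullet(\mathfrak g_{\mathcal N}[1])\otimes T^\bullet(R_{\mathcal N,\mathrm{root}}[1])$ from the factorization maps $\gamma_{\mathcal P'\mathcal P}$ and the cellular differentials, then reads off (1)--(3) from Proposition~\ref{koszul-dual}. The paper only asserts that this is a differential, so your checks of the two ideal conditions and of $D^2=0$ (via codimension-two faces and commuting contractions) add detail rather than a different argument; you should also include the internal differentials of the $N_{ij}$ and $S_i$ in $D$, which is harmless since the composition maps are chain maps.
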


\begin{proof}
The factorization maps associated to refinements of coarse subdivisions define a differential on
\[
S^\bullet(\mathfrak g_{\mathcal N}[1])\otimes
T^\bullet(R_{\mathcal N,\mathrm{root}}[1]),
\]
whose restriction to the symmetric factor gives the \(L_\infty\)-structure on \(\mathfrak g_{\mathcal N}\), while the rooted tensor factor yields the \(A_\infty\)-structure on \(R_{\mathcal N,\mathrm{root}}\). The resulting mixed differential then determines the \(L_\infty\)-morphism to the derived derivation space by the associative version of the formalism recalled in Proposition~\ref{koszul-dual}.
\end{proof}

When we introduce the coefficient system, the dependence of the \(A_\infty\)-algebra on the choice of lift of the stop is also naturally organized by the fundamental group of \(E\). 
For \(\widetilde p\) chosen sufficiently distant, the \(A_\infty\)-algebra and the associated \(L_\infty\)-morphism depend only on the asymptotic direction of \(\widetilde p\). The \(\pi_1(E)\)-action on the set of lifts therefore induces a variation of these structures through the corresponding directions at infinity. This variation is piecewise constant, with jumps occurring precisely when the direction crosses a wall.

\begin{proposition}
The action of \(\pi_1(E)\) on the set of lifts of the stop \(p\) induces a chamberwise constant family of rooted \(A_\infty\)-algebras and associated \(L_\infty\)-morphisms. More precisely, if two sufficiently distant lifts \(\widetilde p\) and \(\widetilde p'\) lie in the same chamber of asymptotic directions, then they determine naturally identified rooted \(A_\infty\)-algebras, and under this
identification the associated \(L_\infty\)-morphisms agree.
\end{proposition}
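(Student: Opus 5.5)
The plan is to reduce this statement to Proposition~\ref{indenpend-chamber}, adding only the check that the coefficient data is carried along by the identification. The construction above uses exactly three inputs:
\begin{enumerate}
\item the combinatorics of marked subpolygons of \((\widetilde Q,\widetilde A)\), rooted versus unrooted, together with their coarse subdivisions, secondary polytopes and orientation signs;
\item the counterclockwise ordering, seen from \(\widetilde p\), of the non-root vertices of each rooted subpolygon;
\item the coefficient modules \(N_{A'}\), \(L_{A'}\) and the contraction maps \(\gamma_{\mathcal P'\mathcal P}\) built from the pairings \(\beta_{ij}\).
\end{enumerate}
I would show that when \(u(\widetilde p)\) and \(u(\widetilde p')\) lie in the same chamber \(C\), each of these inputs is matched by a canonical bijection, so the two algebra differentials on \(S^\bullet(\mathfrak g_{\mathcal N}[1])\otimes T^\bullet(R_{\mathcal N,\mathrm{root}}[1])\) coincide. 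Proposition~\ref{koszul-dual} then turns this equality of differentials into the identification of the \(A_\infty\)-algebras and the equality of the \(L_\infty\)-morphisms.

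Step 1 handles the unrooted part. Unrooted subpolygons involve only \(\widetilde A^\circ\), which is held fixed, so \(\mathfrak g_{\mathcal N}\), its bracket structure, and the modules \(E_{A'}\) are literally the same for both lifts.

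Step 2 handles rooted subpolygons.
\begin{itemize}
\item To a rooted \((Q',A')\) for \(\widetilde p\) I associate the rooted subpolygon for \(\widetilde p'\) with the same non-root marked set. The first thing to verify is that, for sufficiently distant lifts in one chamber, a subset \(B\subset\widetilde A^\circ\) forms a geometric rooted subpolygon with \(\widetilde p\) if and only if it does so with \(\widetilde p'\).
\item The reason is that, as \(\widetilde p\to\infty\) in direction \(u\), the cone \(\operatorname{Conv}(B\cup\{\widetilde p\})\) converges to the Minkowski sum of \(\operatorname{Conv}(B)\) with the ray \(\mathbb R_{\ge 0}u\). Which points of \(\widetilde A^\circ\) lie inside this region, and which points of \(B\) are vertices, changes only when \(u\) becomes parallel to some \(\widetilde a_i-\widetilde a_j\), that is, when \(u\) crosses a wall in \(\mathcal W\).
\item By the same argument, the coarse rooted subdivisions are determined by which fans of segments from the apex are possible. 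So coarse rooted subdivisions correspond bijectively, and the secondary polytopes \(\Sigma(A')\) are combinatorially equivalent, with compatible face posets and factorizations.
\item Orientation signs \(\varepsilon\) then match once the orientations are transported along this combinatorial equivalence.
\end{itemize}

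Step 3 handles the coefficients and the ordering. By Proposition~\ref{indenpend-chamber}, the counterclockwise orderings agree, so the ordered tensor embeddings \(V_{P''}\) agree. The rooted coefficient module \(L_{A'}=N_{i_0i_1}\otimes_{S_{i_1}}\cdots\otimes_{S_{i_m}}N_{i_{m-1}i_m}\) depends only on the ordered boundary vertices \(i_0,\dots,i_m\) other than the root. The root edges carry no coefficients in \(F_{A'}\), so \(L_{A'}\) is identical for both lifts. The contraction maps are built from \(\beta_{ij}\) along internal edges. For a rooted subdivision these are either edges between points of \(\widetilde A^\circ\) or edges from the root; the second kind contribute nothing, because \(L_{A'}\) is linear rather than cyclic. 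Hence the maps \(\gamma_{\mathcal P'\mathcal P}\) also match.

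Putting the steps together, the mixed differential is the same sum of the same maps for both lifts, which gives the claim. Finally, the \(\pi_1(E)\) statement follows because \(\Lambda\) acts on \(\pi^{-1}(p)\), and sufficiently distant lifts are sorted by chamber via \(u\).

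The main obstacle I expect is Step 2: making precise "sufficiently distant" uniformly, so that the same threshold works for all subsets \(B\) simultaneously, and checking that the combinatorial type of every rooted secondary polytope, including which interior points occur, is constant on a chamber. I would first try the asymptotic-cone argument above and extract an explicit distance bound in terms of \(\min\) angular distance from \(u\) to \(\mathcal W\) and the diameter of \(\widetilde A^\circ\). The sign compatibility I would then deduce from the resulting face-poset isomorphism.
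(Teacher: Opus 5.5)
Your route is the paper's. The paper gives no separate proof of this statement: it is the coefficient version of Proposition~\ref{indenpend-chamber}, whose proof only observes that the counterclockwise orders are constant on a chamber, so the ordered tensor embeddings, and hence the differential on \(S^\bullet\otimes T^\bullet\), coincide. Your Steps 1 and 3 make explicit what the paper leaves implicit, and they are fine.

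\textbf{The gap is in Step 2.} Your asymptotic-cone argument controls only incidence data of \(B\cup\{\widetilde p\}\): which points lie in the cone, which are vertices, and which edges face the root. That is its order type. Regularity of subdivisions is not determined by the order type, and the claim that every \(\Sigma(A')\) keeps its combinatorial type across a chamber is false in general. Here is a counterexample.
\begin{itemize}
\item Choose \(B=\{b_2,\dots,b_6\}\) so that \(\widetilde p,b_2,b_3\) and \(b_4,b_5,b_6\) form the two nested triangles of the ``mother of all examples'', with \(b_4\) nearest \(\widetilde p\). Regularity is projectively invariant, so the root at near-infinity is harmless.
\item By a Ceva-type computation, one twisted triangulation is regular on one side of the locus where \(\overline{\widetilde p b_4},\overline{b_2b_5},\overline{b_3b_6}\) are concurrent, and its mirror partner on the other side.
\item Asymptotically this locus is the direction from \(\overline{b_2b_5}\cap\overline{b_3b_6}\) to \(b_4\), which is generically not in \(\mathcal W\).
\item Lattice directions are dense, so arbitrarily distant lifts occur on both sides of this locus inside one chamber.
\end{itemize}
So the labelling does not match the face posets, and your sign comparison, which transports orientations along that equivalence, has nothing to stand on.

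\textbf{How to repair it.} You do not need the full equivalence. Only the coarse subdivisions that actually enter the structure maps must persist: the one- and two-piece rooted subdivisions giving \(m_1,m_2\), and the 1-finite subdivisions of Proposition~\ref{prop:matrix-elements-Phi} giving \(\Phi\).
\begin{itemize}
\item Their existence is incidence data, so your cone argument covers it.
\item Each is regular. A convex piecewise-linear function with the required creases exists, because \(\partial^+Q'\) bulges towards \(\widetilde p\).
\item Each is coarse with product face, for every far lift in the chamber. Heights can be glued cell by cell: every rooted cell meets the cells already treated either along one edge, or in three affinely independent points (\(\widetilde p\) and two consecutive vertices of \(\partial^+Q'\)). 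Hence every combination of regular refinements of the cells is regular, and the direction spaces of the factors are independent.
\item For the signs, join \(\widetilde p\) to \(\widetilde p'\) by a path of non-lattice points at large distance whose direction stays in the chamber. The orientation line of \(\Sigma(A')\) is canonically \(\mathrm{or}(\mathbb R^{A'})\otimes\mathrm{or}(\mathbb R^3)^{*}\), independent of the positions. The sign \(\varepsilon(\mathcal S)\) is locally constant along the path because the facet persists.
\end{itemize}
With Step 2 replaced by this, the rest of your argument goes through. Your remark that the distance threshold must depend on the angular distance from \(u\) to \(\mathcal W\), rather than being uniform on a chamber, is correct.
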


In this sense, the \(\pi_1(E)\)-action does not in general preserve a single fixed
\(A_\infty\)-algebra. Rather, it produces a family of such algebras, indexed
chamberwise by asymptotic directions of lifts of the stop, and crossing a wall may
change the resulting \(A_\infty\)-structure.

\section{Analysis of the \(A_\infty\)-algebra and the \(L_\infty\)-morphism}\label{analysis}

Fix a chamber $\mathfrak C_S$ for the asymptotic direction of the chosen lift \(\widetilde p\) of the stop.
This determines a total order on the non-root vertices $\widetilde A^\circ:=\widetilde A\setminus \{\widetilde p\}$,
which we write as
\[
i_1<i_2<\cdots<i_r.
\]
All constructions in this section are understood with respect to this fixed order.
Once the order is fixed, the analysis of the directed
\(A_\infty\)-algebra and of the corresponding \(L_\infty\)-morphism follows
\cite[Section~11]{KKS}.

\subsection{\(A_\infty\)-algebra}

Recall that the rooted \(A_\infty\)-algebra is
\[
R
=
\bigoplus_{\substack{(Q',A')\subset (\widetilde Q,\widetilde A)\\ \text{rooted}}}
F_{A'}.
\]
For \(i<j\) in \(\widetilde A^\circ\), let \(R_{ij}\subset R\) be the direct sum of the
summands \(F_{A'}\) corresponding to rooted subpolygons \((Q',A')\) such that the two edges of
\(Q'\) incident to the root \(\widetilde p\) meet the vertices \(i\) and \(j\). Thus
\[
R=\bigoplus_{i<j} R_{ij}.
\]

\begin{proposition}
\label{prop:rooted-triangular}
The higher products \(m_n\), \(n\ge 3\), on \(R\) vanish. The only nontrivial binary
products are the maps
\[
\mu_{ijk}\colon R_{ij}\otimes R_{jk}\longrightarrow R_{ik},
\qquad i<j<k.
\]
Hence \(R\) is a strictly upper--triangular dg-algebra without unit.

If one works with an extended coefficient system and sets \(R_{ii}:=S_i\), then
\[
R:=\bigoplus_{i\le j} R_{ij}
\]
becomes a triangular associative dg-algebra with unit.
\end{proposition}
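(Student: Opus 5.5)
The plan is to read off the $A_\infty$-operations on $R$ directly from the rooted part of the algebra differential on $T^\bullet(V_r)$ (or its coefficient-enhanced version). By Proposition~\ref{koszul-dual}, $m_n$ is dual to the component of $d|_{V_r}$ landing in $T^n(V_r)$, and each matrix coefficient of this component is indexed by a coarse rooted subdivision of a rooted marked polygon $(Q',A')$ into $n$ rooted pieces. The first step is therefore a classification of coarse rooted subdivisions in dimension two, refining the dichotomy already used in Proposition~\ref{prop:Rtp-associative}.

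The key geometric claim is the following. Let $(Q',A')$ be rooted, with root $\widetilde p$ and incident boundary edges ending at $i<j$. In any rooted subdivision, all cells contain the vertex $\widetilde p$, so near $\widetilde p$ the cells are separated by segments issuing from $\widetilde p$. These are the internal edges through the root. A subdivision with $n\ge 3$ pieces therefore has at least two such rays. Deleting all but one of them gives a coarser rooted subdivision with two pieces, and this coarsening is again regular: its height function can be taken linear on each side of a single ray through $\widetilde p$, which is a convex bending function. Hence an $n$-piece subdivision with $n\ge3$ is never coarse, and so $m_n=0$ for $n\ge 3$.

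The same argument classifies the coarse rooted subdivisions. They are either a two-piece split along one ray $[\widetilde p,j]$ with $i<j<k$, or a one-piece modification that only changes the marking. The one-piece modifications contribute only to $m_1$, that is, to the differential internal to the coefficient complexes and the cellular differential, and they do not mix the summands $R_{ij}$ with different endpoint pairs. The two-piece split cuts $Q'$ into a piece whose root edges end at $i,j$ and a piece whose root edges end at $j,k$. Using the counterclockwise order fixed by the chamber $\mathfrak C_S$ (Proposition~\ref{indenpend-chamber}), the left piece is always the one with the smaller pair of indices. Dualizing, this gives exactly maps $\mu_{ijk}\colon R_{ij}\otimes R_{jk}\to R_{ik}$ with $i<j<k$, and no others, since $R_{jk}\otimes R_{ij}$ never arises in that order. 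With coefficients, $\mu_{ijk}$ is the concatenation map $L\otimes_{S_j}L'\to L''$ along the shared ray, built from the maps $\gamma_{\mathcal P'\mathcal P}$.

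The Stasheff identities with $m_{\ge3}=0$ then say that $(R,m_1,m_2)$ is a dg-algebra. The product strictly increases the index pair, so $R$ is strictly upper triangular and has no unit. For the unital statement, set $R_{ii}:=S_i$ and let $S_i$ act on $R_{ij}$ and $R_{ji}$ through the bimodule structures on the tensor factors $N_{i\,\cdot}$ and $N_{\cdot\,i}$ at the ends of $L_{A'}$. Associativity of these actions with the $\mu_{ijk}$ holds because the $\mu_{ijk}$ are $S_j$-balanced, being defined over $\otimes_{S_j}$, and are bimodule maps. Compatibility with differentials holds because $S_i$ is a dg-algebra and the $N_{ij}$ are dg-bimodules.

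The main obstacle is the regularity and coarseness step: showing that multi-ray rooted subdivisions are never coarse, and also handling rooted subdivisions with an interior vertex adjacent to the root. I would first handle this by the explicit height-function construction described above. If needed, I would fall back on the factorization of faces in Proposition~\ref{prop:factorization} to argue that a face of codimension one cannot factor into three or more rooted secondary polytopes, because each additional ray adds an independent degree of freedom.
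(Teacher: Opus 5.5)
Your proposal is correct and follows the paper's route. You classify the coarse rooted subdivisions as one-piece marking changes (contributing to $m_1$) or two-piece splits along a ray $[\widetilde p,j]$ (giving $\mu_{ijk}$), you dualize, and you get the unital version from the bimodule structures on the $R_{ij}$.

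The paper only asserts the one-piece/two-piece dichotomy, referring to the relative construction of \cite{KKS}. You justify it through the regular two-cell coarsening given by a convex function bending along a single ray. The case you flag as an obstacle cannot occur: each rooted cell is the intersection of $Q'$ with its tangent cone at $\widetilde p$, so every internal edge is a full chord from $\widetilde p$ to $\partial Q'$.
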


\begin{proof}
By construction, the \(A_\infty\)-operations on \(R\) are defined by coarse
subdivisions of rooted polygons into rooted subpolygons. In dimension two, a coarse subdivision
of a rooted polygon into rooted pieces has either one part, corresponding to the differential
\(m_1\), or two parts, corresponding to the binary product \(m_2\). There are no coarse
subdivisions contributing to \(m_n\) for \(n\ge 3\). This proves the vanishing of the higher
products.

Moreover, the only way to compose two rooted polygons is to glue them along a common rooted edge,
so that the output again has root edges at the two outer vertices. Thus the only nonzero binary
compositions are of the form
\[
R_{ij}\otimes R_{jk}\to R_{ik},
\qquad i<j<k.
\]
This shows that \(R\) is strictly upper--triangular. The final statement is
immediate from the bimodule structures on the \(R_{ij}\).
\end{proof}

\subsection{\(L_\infty\)-morphism}

We now analyze the \(L_\infty\)-morphism
\[
\Phi\colon \mathfrak g \longrightarrow  C^{\ge 1}(R,R)[1]
\]
constructed in the previous section. Since
\[
\mathfrak g=\bigoplus_{\substack{(Q',A')\subset (\widetilde Q,\widetilde A)\\ \text{unrooted}}} E_{A'},
\qquad
R=\bigoplus_{\substack{(P,B)\subset (\widetilde Q,\widetilde A)\\ \text{rooted}}} F_B,
\]
the morphism \(\Phi\) is determined by its matrix elements with respect to these direct sum
decompositions. Equivalently, for each finite marked subpolygon \((Q',A')\) and rooted marked
subpolygons \((P_1,B_1),\dots,(P_m,B_m),(P,C)\), we have a map
\[
\Phi^{(B_1,\dots,B_m\mid C)}_{A'}
\colon
E_{A'}
\longrightarrow
\operatorname{Hom}\bigl(F_{B_1}\otimes \cdots \otimes F_{B_m},F_C\bigr)[1-m].
\]
It is often more convenient to write this in transposed form as
\[
\Phi^{C}_{A';B_1,\dots,B_m}
\colon
E_{A'}\otimes F_{B_1}\otimes \cdots \otimes F_{B_m}
\longrightarrow
F_C.
\]

Let \((Q',A')\subset (\widetilde Q,\widetilde A)\) be a finite marked subpolygon. Consider the rooted
convex hull
\[
\widehat Q' := \operatorname{Conv}(A'\cup\{\widetilde p\}),
\qquad
\widehat A' := \widetilde A\cap \widehat Q'.
\]
Since the chamber is fixed, the root \(\widetilde p\) determines two distinguished boundary arcs on
\(\partial Q'\)(See Figure~\ref{fig:positive-negative-boundary}):

\begin{itemize}
\item the \emph{positive boundary} \(\partial^+Q'\)(drawn in red), consisting of those sides of \(Q'\) facing the
root \(\widetilde p\);
\item the \emph{negative boundary} \(\partial^-Q'\)(drawn in blue), consisting of the complementary sides of
\(\partial Q'\).
\end{itemize}

\begin{figure}[h]
\centering
\begin{overpic}[width=0.35\textwidth]{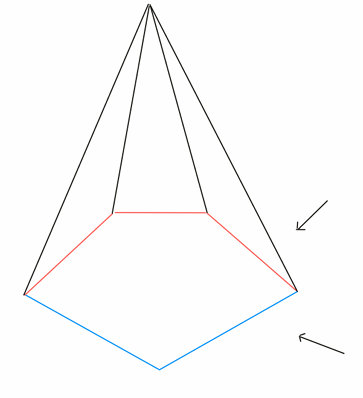}
    \put(32,97){$\tilde p$}
    \put(37,27){$Q'$}
    \put(90,10){$\partial^-Q'$}
    \put(85,50){$\partial^+Q'$}
    \put(20,50){$\Pi_1$}
    \put(53,50){$\Pi_m$}
    \put(36,50){$\cdots$}
    
\end{overpic}
\caption{The positive and negative boundary components associated to \(Q'\).}
\label{fig:positive-negative-boundary}
\end{figure}

Write the edges of \(\partial^+Q'\) as
\[
\eta_1,\dots,\eta_m
\]
in counterclockwise order around \(\widetilde p\). For each \(\nu\), let \(\Pi_\nu\) be the rooted
triangle having root \(\widetilde p\) and opposite side \(\eta_\nu\), and put
\[
D_\nu := \widetilde A\cap \Pi_\nu.
\]
Then we obtain a subdivision
\[
(\widehat Q',\widehat A')
=
(Q',A')\cup (\Pi_1,D_1)\cup \cdots \cup (\Pi_m,D_m).
\]

More generally, let \((P,C)\subset (\widetilde Q,\widetilde A)\) be a rooted marked subpolygon, and let
\((Q',A')\subset (P,C)\) be a finite marked subpolygon such that \(\partial^-Q'\) is contained in the
finite part of \(\partial P\). The two remaining finite parts of \(\partial P\) will be called the
left handle (green) and right handle (purple) and denoted $\lambda, \rho$. Such a subdivision is called a 1-finite subdivision, see Figure~\ref{fig:handle}.

\begin{figure}[h]
\centering
\begin{overpic}[width=0.6\textwidth]{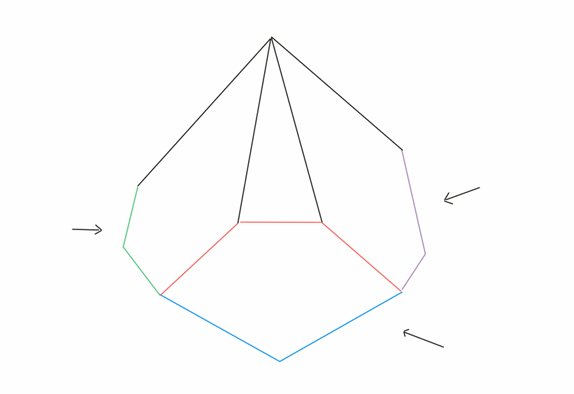}
    \put(45,64){$\tilde p$}
    \put(46,20){$Q'$}
    \put(80,6){$\partial^-Q'$}
    \put(85,35){Right handle $\rho$}
    \put(-15,27){Left handle $\lambda$}
    \put(30,31){$P_1$}
    \put(63,31){$P_m$}
    \put(46,31){$\cdots$}
    
\end{overpic}
\caption{General 1-finite subdivision and handles.}
\label{fig:handle}
\end{figure}

Let \(\eta_1,\dots,\eta_m\) be the edges of \(\partial^+Q'\) in counterclockwise order around
\(\widetilde p\), and let \(P_1,\dots,P_m\) be the rooted subpolygons between \(\widetilde p\) and these
edges. Put
\[
B_\nu := \widetilde A\cap P_\nu.
\]

If one works with coefficients, let $N_\lambda, N_\rho$
denote the linear tensor products along the two handles. Then
\[
F_{B_1}=N_\lambda\otimes_S E_{\eta_1},\qquad
F_{B_\nu}=E_{\eta_\nu}\ \ (2\le \nu\le m-1),\qquad
F_{B_m}=E_{\eta_m}\otimes_S N_\rho,
\]
and
\[
F_C = N_\lambda\otimes_S E_{A'}\otimes_S N_\rho.
\]
In the coefficient-free case, one simply omits the factors \(N_\lambda\) and \(N_\rho\).

Let
\[
\gamma_{Q'}
\colon
E_{A'}\otimes F_{D_1}\otimes \cdots \otimes F_{D_m}
\longrightarrow
F_{\widehat A'}
\]
be the composition map obtained by contracting along the intermediate edges of the basic rooted
subdivision of \((\widehat Q',\widehat A')\). We then define
\[
\Phi^{C}_{A';B_1,\dots,B_m}
:=
\operatorname{Id}_{N_\lambda}\otimes_S \gamma_{Q'}\otimes_S \operatorname{Id}_{N_\rho}.
\]
This gives a map
\[
E_{A'}\otimes F_{B_1}\otimes \cdots \otimes F_{B_m}\longrightarrow F_C.
\]

\begin{proposition}
\label{prop:matrix-elements-Phi}
The maps
\[
\Phi^{C}_{A';B_1,\dots,B_m}
\]
obtained from all pairs \((P,Q')\) consisting of a rooted subpolygon \(P\subset (\widetilde Q,\widetilde A)\)
and a finite subpolygon \(Q'\subset P\) with \(\partial^-Q'\) contained in the finite part of
\(\partial P\), are precisely the matrix elements of the \(L_\infty\)-morphism
\[
\Phi\colon \mathfrak g \longrightarrow C^{\ge 1}(R,R)[1].
\]
\end{proposition}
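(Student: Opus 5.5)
The plan is to read off the matrix elements of \(\Phi\) directly from the mixed part of the algebra differential on \(S^\bullet(V)\otimes T^\bullet(V_r)\), using the dictionary of Proposition~\ref{koszul-dual}. There the component \(\alpha_{n,m}\) of the \(L_\infty\)-morphism is dual to the component
\[
d_{n,m}\colon V_C\longrightarrow S^n(V)\otimes T^m(V_r)
\]
of the differential restricted to a rooted generator \(V_C\). So the first step is to identify, for a rooted geometric polygon \((P,C)\), all coarse subdivisions of \(P\) whose cells include at least one unrooted piece; these are exactly the terms in \(d|_{V_C}\) of positive symmetric degree.

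The key geometric step is a classification. In dimension two, a coarse regular subdivision of \((P,C)\) has a single interior "wall" structure. If it contains unrooted cells, then for a coarse subdivision there is exactly one unrooted cell \(Q'\). Every other cell must then be a rooted polygon containing \(\widetilde p\), and such a cell is bounded by two rays from \(\widetilde p\). This forces \(n=1\) in the nonlinear components, with higher \(\Phi_n\) arising only from non-coarse data, which is absent. I would check this by a height-function argument. A coarse subdivision corresponds to a facet, that is, a one-parameter family of heights. The rooted cells are the fans from \(\widetilde p\) over the edges of \(\partial^+Q'\), namely \(P_1,\dots,P_m\). The complement of \(Q'\cup\bigcup P_\nu\) in \(P\) must be empty. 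Consequently \(\partial^-Q'\) lies on the finite part of \(\partial P\), and the remaining finite boundary of \(P\) splits into the left handle \(\lambda\) and the right handle \(\rho\), absorbed into \(P_1\) and \(P_m\). This gives exactly the 1-finite subdivisions of Figure~\ref{fig:handle}. Conversely, every 1-finite subdivision is regular and coarse. Regularity holds because we can lift \(Q'\) by one affine function and the fan by another, bending along \(\partial^+Q'\). Coarseness holds because the face is \(\Sigma(A')\times\prod_\nu\Sigma(B_\nu)\), of codimension one by the dimension count of Proposition~\ref{prop:dimension-secondary-polytope}.

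Next I would match the coefficient maps. The component of the cellular differential of \(\mathcal N_C\) along the face \(F_{\mathcal P}\) of the 1-finite subdivision is the contraction \(\gamma_{\mathcal P}\) along the interior edges. These interior edges are \(\eta_1,\dots,\eta_m\) together with the rays from \(\widetilde p\) separating the \(P_\nu\). Contracting along them leaves the handle factors \(N_\lambda,N_\rho\) untouched. Using the descriptions \(F_{B_1}=N_\lambda\otimes_S E_{\eta_1}\), \(F_{B_m}=E_{\eta_m}\otimes_S N_\rho\) and \(F_C=N_\lambda\otimes_S E_{A'}\otimes_S N_\rho\), the contraction factors as \(\operatorname{Id}_{N_\lambda}\otimes_S\gamma_{Q'}\otimes_S\operatorname{Id}_{N_\rho}\). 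Here \(\gamma_{Q'}\) is the contraction for the basic subdivision of \(\widehat Q'\). Dualizing and transposing then gives \(\Phi^C_{A';B_1,\dots,B_m}\), with the tensor order of the \(B_\nu\) fixed by the chamber order around \(\widetilde p\), and the sign given by \(\varepsilon(\mathcal P)\).

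The main obstacle I expect is the classification step, specifically ruling out coarse subdivisions with two or more unrooted cells, or with rooted cells not of fan type. For this I would argue that the rooted cells all share the vertex \(\widetilde p\). Because \(\widetilde p\) is far away and the chamber is fixed, those cells are linearly ordered, with consecutive cells meeting along a ray. Any unrooted region must then be a union of cells lying under the lower envelope seen from \(\widetilde p\). If this region consisted of several cells, then merging them would give a regular coarsening, contradicting coarseness. This mirrors \cite[Section~11]{KKS}, and I would follow their argument, citing it for the sign conventions.
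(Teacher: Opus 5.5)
Your route is the paper's: read off the components of $\Phi$ from the mixed part of the differential via the dictionary $d_{n,m}\leftrightarrow\alpha_{n,m}$ of Proposition~\ref{koszul-dual}, identify the relevant coarse subdivisions with 1-finite ones, and match the contractions. Your treatment of $N_\lambda$, $N_\rho$ and $\gamma_{Q'}$ is fine.

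\textbf{The gap.} The step that fails is your classification lemma: that a coarse subdivision of a rooted polygon contains at most one unrooted cell, so that $\Phi_n=0$ for $n\ge 2$. This is false. Take $\widetilde A^\circ=\{a,b,c,v\}$ with $v$ inside the triangle $abc$, and $\widetilde p$ far away on the side of the line $ab$ opposite to $c$ (e.g.\ $a=(-1,0)$, $b=(1,0)$, $c=(0,-2)$, $v=(0.1,-0.5)$, $\widetilde p=(0.3,100)$). Let $P=\operatorname{Conv}(\widetilde p,a,c,b)$ and $C=\{\widetilde p,a,b,c,v\}$.
\begin{itemize}
\item Heights $0$ at $\widetilde p,a,v,b$ and $1$ at $c$ induce a regular subdivision into the rooted quadrilateral $\{\widetilde p,a,v,b\}$ and two unrooted triangles $\{a,v,c\}$, $\{v,b,c\}$.
\item Its face is $\Sigma(\{\widetilde p,a,v,b\})\times\mathrm{pt}\times\mathrm{pt}$, of dimension $1=\dim\Sigma(C)-1$. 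So it is coarse: it is the edge of the pentagon $\Sigma(C)$ given by the flip $[a,b]\leftrightarrow[\widetilde p,v]$.
\item Hence $d(v_C)$ contains $\pm(v_{\{a,v,c\}}\odot v_{\{v,b,c\}})\otimes v_{\{\widetilde p,a,v,b\}}\in S^2(V)\otimes T^1(V_r)$. This is a nonzero matrix element of $\Phi_2$, already without coefficients.
\end{itemize}
Your merging argument breaks exactly here. The union of the two unrooted cells is the quadrilateral $a,c,b,v$, which is reflex at $v$ (a convex vertex of the rooted cell). So it is not a cell of any coarsening. In general, coarse subdivisions with $k$ unrooted cells give matrix elements of $\Phi_k$.

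\textbf{What is true and suffices.} The coarse subdivisions with \emph{exactly one} unrooted cell are precisely the 1-finite ones. Hence the maps $\Phi^{C}_{A';B_1,\dots,B_m}$ are precisely the matrix elements of the linear component $\Phi_1$. This matches the paper's notation $\Phi^{(B_1,\dots,B_m\mid C)}_{A'}$, which has a single input $E_{A'}$. It is also all the universality proof needs, since quasi-isomorphism is detected by $\Phi_1$.

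The paper's one-line proof makes the same ``exactly one finite piece'' assertion without argument, and it is correct only for the $d_{1,m}$ part. The restricted classification is easy to prove:
\begin{itemize}
\item With one unrooted cell, every edge of $Q'$ lies on $\partial P$ or is shared with a rooted cell.
\item A shared edge must face $\widetilde p$, and an edge facing $\widetilde p$ cannot lie on $\partial P$. So $\partial^-Q'\subset\partial P$ and every edge of $\partial^+Q'$ is shared.
\item By convexity, a rooted cell contains at most one edge of $\partial^+Q'$.
\item Any further ray from $\widetilde p$ (e.g.\ into a handle) splits a cell and lowers the face dimension below $\dim\Sigma(C)-1$.
\end{itemize}

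\textbf{Regularity of 1-finite subdivisions.} Your argument needs repair when $m\ge 2$. An affine function on $Q'$ and a single affine function on the fan cannot agree along the non-straight broken line $\partial^+Q'$. Instead, let $\ell_\nu$ be affine, vanishing on the line of $\eta_\nu$ and positive at $\widetilde p$. Take the piecewise-linear function equal to $0$ on $Q'$ and to $\ell_\nu/\ell_\nu(\widetilde p)$ on $P_\nu$.
\begin{itemize}
\item Adjacent pieces agree on the ray from $\widetilde p$ through $\eta_\nu\cap\eta_{\nu+1}$, since both equal $1$ at $\widetilde p$ and $0$ at that vertex.
\item Across $\eta_\nu$ the function is locally $\max(0,\cdot)$.
\item Across each ray, the difference of the two pieces is nonnegative at the other endpoint of $\eta_\nu$, which lies in $Q'$.
\end{itemize}
So the function bends convexly across every interior edge, and the subdivision is regular.
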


\begin{proof}
The mixed differential on
\[
S^\bullet(\mathfrak g[1])\otimes T^\bullet(R[1])
\]
is determined by coarse subdivisions containing exactly one finite piece and any number of rooted
pieces. Such a subdivision is uniquely encoded by a pair \((P,Q')\) as above. The corresponding
component of the mixed differential is exactly the map
\[
\Phi^{C}_{A';B_1,\dots,B_m},
\]
and these components exhaust all matrix elements of \(\Phi\).
\end{proof}

\section{Universality Theorem}\label{uni-thm}
Fix an ordered collection $S$
of points in the lifted configuration \(\widetilde A^\circ\), let $\mathfrak g_{\widetilde A^\circ}$ be the \(L_\infty\)-algebra constructed from \(\widetilde A^\circ\), and let $\mathfrak C_S\subset S^1\setminus\mathcal W$ be a chamber compatible with this ordering in the sense of
Section~\ref{A-inf-chamber}. 
Let $R_{\mathfrak C_S}$ be the triangular dg-algebra associated to the rooted construction in this
chamber. We write
\[
R_{\mathfrak C_S}
=
\bigoplus_{i\le j} R_{ij},
\qquad
R_{ii}=\mathcal S_i,
\]
where the triangular condition is understood with respect to the order
determined by \(\mathfrak C_S\). We are interested in deformations of
\(R_{\mathfrak C_S}\) which preserve this triangular structure and do not
deform the diagonal algebras \(\mathcal S_i\). Such deformations are governed
by the directed Hochschild complex.

\begin{definition}
\label{def:directed-hoch-chamber}
The \emph{directed Hochschild complex} of \(R_{\mathfrak C_S}\) is the
subcomplex
\[
\overrightarrow{C}^{\,\bullet}
(R_{\mathfrak C_S},R_{\mathfrak C_S})
\subset
C^\bullet(R_{\mathfrak C_S},R_{\mathfrak C_S})
\]
whose degree-\(n\) term is
\[
\overrightarrow{C}^{\,n}
(R_{\mathfrak C_S},R_{\mathfrak C_S})
:=
\bigoplus_{i_0<i_1<\cdots<i_n}
\operatorname{Hom}_{\mathcal S_{i_0}\otimes \mathcal S_{i_n}^{\mathrm{op}}}
\!\left(
R_{i_0i_1}\otimes_{\mathcal S_{i_1}}R_{i_1i_2}
\otimes_{\mathcal S_{i_2}}
\cdots
\otimes_{\mathcal S_{i_{n-1}}}R_{i_{n-1}i_n},
\, R_{i_0i_n}
\right).
\]
\end{definition}

By construction, $\overrightarrow{C}^{\,\bullet}
(R_{\mathfrak C_S},R_{\mathfrak C_S})$ is a subcomplex of the ordinary Hochschild complex and is closed under the
Hochschild bracket. Hence $\overrightarrow{C}^{\,\bullet}\
(R_{\mathfrak C_S},R_{\mathfrak C_S})[1]$ is a dg-Lie subalgebra of $C^\bullet(R_{\mathfrak C_S},R_{\mathfrak C_S})[1].$

The chamberwise \(L_\infty\)-morphism constructed above is denoted
\[
\Phi_{\mathfrak C_S}\colon
\mathfrak g_{\widetilde A^\circ}
\longrightarrow
C^{\ge 1}(R_{\mathfrak C_S},R_{\mathfrak C_S})[1].
\]
Since the construction preserves the directed order determined by
\(\mathfrak C_S\), this morphism is expected to factor through the directed
Hochschild complex.
The following theorem is the elliptic-curve version of the universality theorem of \cite[Section~12]{KKS}. The proof follows the same deformation-theoretic strategy, while keeping track of the additional lift and chamber data which appear for elliptic--curve--valued potentials.

\begin{theorem}[Universality Theorem]
\label{universality-elliptic}
Let \(\mathfrak C_S\) be a chamber compatible with the ordered lifted
configuration $S$.
Then the \(L_\infty\)-morphism
\[
\Phi_{\mathfrak C_S}\colon
\mathfrak g_{\widetilde A^\circ}
\longrightarrow
C^{\ge 1}(R_{\mathfrak C_S},R_{\mathfrak C_S})[1]
\]
factors through an \(L_\infty\)-morphism
\[
\Psi_{\mathfrak C_S}\colon
\mathfrak g_{\widetilde A^\circ}
\longrightarrow
\overrightarrow{C}^{\ge 1}
(R_{\mathfrak C_S},R_{\mathfrak C_S})[1].
\]
Moreover, \(\Psi_{\mathfrak C_S}\) is a quasi-isomorphism.
\end{theorem}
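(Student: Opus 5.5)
The proof splits into two parts: first show that $\Phi_{\mathfrak C_S}$ lands in the directed subcomplex, then show that the resulting $\Psi_{\mathfrak C_S}$ is a quasi-isomorphism. Throughout, the chamber $\mathfrak C_S$ and the total order $i_1<\cdots<i_r$ on $\widetilde A^\circ$ are fixed. By Proposition~\ref{indenpend-chamber}, we may replace $\widetilde p$ by any sufficiently distant lift with $u(\widetilde p)\in\mathfrak C_S$.

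\textbf{Step 1: factorization.} For the factorization I will use the explicit matrix elements of Proposition~\ref{prop:matrix-elements-Phi}. A nonzero component
\[
\Phi^{C}_{A';B_1,\dots,B_m}\colon E_{A'}\otimes F_{B_1}\otimes\cdots\otimes F_{B_m}\to F_C
\]
comes from a $1$-finite subdivision of a rooted polygon $P$. Its rooted pieces $P_1,\dots,P_m$ are the triangles or polygons between $\widetilde p$ and the consecutive edges $\eta_1,\dots,\eta_m$ of $\partial^+Q'$. Hence $F_{B_\nu}\subset R_{j_{\nu-1}j_\nu}$ for a strictly increasing chain $j_0<j_1<\cdots<j_m$ in the order of $\mathfrak C_S$, and $F_C\subset R_{j_0j_m}$.

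Two further properties come directly from the defining formula $\operatorname{Id}_{N_\lambda}\otimes_S\gamma_{Q'}\otimes_S\operatorname{Id}_{N_\rho}$:
\begin{itemize}
\item the diagonal pieces $R_{ii}=\mathcal S_i$ never occur as inputs;
\item the map is $\mathcal S_{j_0}\otimes\mathcal S_{j_m}^{\mathrm{op}}$-linear and balanced over the interior $\mathcal S_{j_\nu}$, since the contractions $\beta$ are bimodule maps.
\end{itemize}
So each $\Phi_n(x_1,\dots,x_n)$, with $x_i\in\mathfrak g_{\widetilde A^\circ}$, lies in $\overrightarrow{C}^{\ge1}(R_{\mathfrak C_S},R_{\mathfrak C_S})[1]$. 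This subspace is a dg Lie subalgebra and the inclusion is strict, so the $L_\infty$-relations for $\Psi_{\mathfrak C_S}$ are those of $\Phi_{\mathfrak C_S}$ restricted.

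\textbf{Step 2: quasi-isomorphism.} It suffices to show that the linear component $\Psi_1$ is a quasi-isomorphism of complexes. Both sides decompose by the outer pair $(i_0,i_n)$: on the Hochschild side this is the pair $(i_0,i_n)$ indexing the summand, and on the $\mathfrak g$ side it is the pair of extreme vertices of $\partial^-Q'$ as seen from $\widetilde p$. I will filter both complexes by the number of marked points $|A'|$, respectively $|C|$ for the output. The differential $\ell_1$ and the Hochschild differential only increase this number or preserve it in a controlled way, so I can compare the associated spectral sequences.

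On the associated graded, the Hochschild side for fixed $(i_0,i_n)$ is the bar-type complex of chains $i_0<i_1<\cdots<i_n$. Through the identification of $R_{ij}$ with cellular cochains of the sheaves $\mathcal N$ on secondary polytopes of rooted polygons, this complex becomes the cellular cochain complex of $\mathcal N$ on the secondary polytope $\Sigma(\widehat A')$ of the rooted hull, relative to the faces not of $1$-finite type. The $\mathfrak g$ side contributes the top cell $E_{A'}$ together with its boundary. The factorization property (Proposition~\ref{prop:factorization}) together with contractibility of polytopes should identify the two graded pieces. This is exactly the argument of \cite[Section~12]{KKS}, applied upstairs to the planar configuration $\widetilde A=\widetilde A^\circ\cup\{\widetilde p\}$.

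\textbf{Main obstacle.} The hardest step is the reduction to the planar relative setting of \cite{KKS}. There the ordering comes from a half-plane, that is, a root at infinity, whereas here $\widetilde p$ is a finite but far point. I would prove a lemma stating that, for $u(\widetilde p)\in\mathfrak C_S$ and $|\widetilde p|\gg0$, the poset of rooted geometric subpolygons and their coarse rooted subdivisions is isomorphic, compatibly with orientations, to the KKS poset for the half-plane with inward normal $-u(\widetilde p)$.

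To prove the lemma I would first check that the visibility order from $\widetilde p$ agrees with the order of projections onto $u(\widetilde p)^\perp$ away from $\mathcal W$. Then I would show that a rooted cone over a segment contains no point of $\widetilde A^\circ$ beyond what the half-plane model predicts, once $\widetilde p$ is far enough. With this lemma in hand, both the filtration argument and the sign bookkeeping transfer verbatim.
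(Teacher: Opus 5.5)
Your Step~1 is the paper's argument and is fine. The gap is in Step~2, where you identify the associated graded of the directed Hochschild complex with a relative cellular cochain complex of $\mathcal N$ on $\Sigma(\widehat A')$.

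Once the outer Hochschild terms drop out, the piece with a fixed output $P_0$ is $\bigoplus \operatorname{Hom}(F_{B_1}\otimes_S\cdots\otimes_S F_{B_n},F_{B_0})$. The sum runs over \emph{all} admissible sequences $P_1,\dots,P_n$ with the right rooted edges. These inputs are arbitrary rooted polygons. They need not lie in $P_0$, and in general they do not combine with a convex unrooted piece into a $1$-finite subdivision of $P_0$. Each summand is the cyclic tensor product along the closed edge path $\partial^-P_1\cdots\partial^-P_n$ followed by $\partial^-P_0$ reversed. Most such paths are non-convex, overlapping, or retrace themselves along handles, and they index no face of any secondary polytope. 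So the identification fails, and these extra summands are exactly what must be shown to be acyclic. Factorization plus contractibility of polytopes says nothing about them. Contractibility alone would not determine the cohomology of a relative cochain complex anyway.

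The missing idea is a simplex argument at the level of closed paths.

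\begin{itemize}
\item For a fixed path, the summands differ only in the set of intermediate vertices $s$ at which the path is cut by rooted edges $[\tp,s]$. The coefficient factor is independent of the cuts, and the intermediate Hochschild terms add one cut at a time.
\item Hence the corresponding subcomplex is that coefficient factor tensored with an augmented cochain complex $C^\bullet(\Delta^{r-1})$, where $r$ counts the optional cuts. It is exact for $r\ge1$.
\item The survivors, with $r=0$, have every $P_\nu$ a triangle over an upwardly convex broken line. Together with $\partial^-P_0$, this line bounds a convex unrooted polygon. These are precisely the summands on which the handle-free component of $\Psi$ is the identity.
\end{itemize}

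The paper runs this argument with the filtration by handle length (Steps~2--4). The outer Hochschild terms lengthen handles, $\operatorname{gr}^0_G\simeq\mathfrak g$, and $\operatorname{gr}^l_G$ is exact for $l\ge1$.

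Your filtration by output size also kills the outer terms and could be combined with the same argument. In that case, however, $\mathfrak g$ must be filtered by $|\widehat A'|$, not $|A'|$. The difference $|\widehat A'|-|A'|$ exceeds $1$ whenever the rooted triangles over $\partial^+Q'$ contain further points of $\widetilde A$. With the $|A'|$ filtration, the associated graded of $\Psi$ would miss those $E_{A'}$.

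Finally, the half-plane comparison lemma you flag as the main obstacle is not needed. The paper works directly with the far lift $\tp$ as a genuine vertex, with the order fixed by the chamber through Proposition~\ref{indenpend-chamber}.
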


Thus the \(L_\infty\)-algebra \(\mathfrak g_{\widetilde A^\circ}\) governs the
same deformation problem as the directed Hochschild complex of the triangular
dg-algebra \(R_{\mathfrak C_S}\): namely, deformations preserving the
triangular structure determined by the chamber \(\mathfrak C_S\) and fixing the
diagonal algebras \(\mathcal S_i\).

\begin{remark}
Within a fixed chamber
\(\mathfrak C_S\), the induced order on every rooted subpolygon is constant, so
the triangular algebra \(R_{\mathfrak C_S}\), the directed Hochschild complex,
and the universality morphism \(\Psi_{\mathfrak C_S}\) are fixed. When the
asymptotic direction of the lift of the stop crosses a wall, the induced order
on rooted subpolygons changes. Consequently, the triangular algebra may jump,
and one obtains a different directed Hochschild complex and a different
universality morphism. Hence the elliptic construction gives a family of
universality statements indexed by chambers.
\end{remark}

\begin{proof}[Proof of the Universality Theorem]
For brevity, we fix a chamber ${\mathfrak C_S}$ and write
\[
\overrightarrow{C}^{\bullet}:=\overrightarrow{C}^{\ge 1}(R_{\mathfrak C_S},R_{\mathfrak C_S})[1].
\]

We first prove the factorization statement. By Proposition~\ref{prop:matrix-elements-Phi}, the matrix
components of \(\psi\) are the maps
\[
\psi^{(B_1,\dots,B_m\mid C)}_{A'}
\]
coming from 1-finite subdivisions. Such a component can be nonzero only when
\[
F_{B_1}\subset R_{i_0 i_1},\quad F_{B_2}\subset R_{i_1 i_2},\quad \dots,\quad
F_{B_m}\subset R_{i_{m-1} i_m},
\]
and
\[
F_C\subset R_{i_0 i_m}
\]
for some strictly increasing sequence \(i_0<\cdots<i_m\). Moreover, these maps
are multilinear over the intermediate algebras \(S_{i_1},\dots,S_{i_{m-1}}\).
Therefore \(\psi\) takes values in the ordered Hochschild complex, so it factors
through an \(L_\infty\)-morphism
\[
\Psi\colon \mathfrak g_{\widetilde A^\circ} \longrightarrow \overrightarrow{C}^{\bullet}.
\]

It remains to prove that \(\Psi\) is a quasi-isomorphism. We organize the
argument in four steps.

\smallskip
\noindent\textbf{Step 1: Interpretation of \(\overrightarrow{C}^{\bullet}\) via closed paths.}
Recall that
\[
R_{ij}=\bigoplus_{Q'} F_{A\cap Q'},
\]
where \(Q'\) runs over rooted marked polygons with rooted edges
\([\,\tp,i\,]\) and \([\,j,\tp\,]\).

A sequence \(P_0,P_1,\dots,P_n\) of marked rooted polygons is called
\emph{admissible} if there exist indices
\[
i_0< i_1< \cdots < i_n
\]
such that \(P_0\) has rooted edges \([\,\tp,i_0\,]\) and \([\,i_n,\tp\,]\),
while for each \(\nu=1,\dots,n\), the polygon \(P_\nu\) has infinite edges
\([\,\tp,i_{\nu-1}\,]\) and \([\,i_\nu,\tp\,]\).

For such an admissible sequence, write \(B_\nu=A\cap P_\nu\), and set
\[
F^{P_0}_{P_1,\dots,P_n}
:=
\operatorname{Hom}_{S_{i_0}\otimes S_{i_n}^{\mathrm{op}}}
\Bigl(
F_{B_1}\otimes_{S_{i_1}} \cdots \otimes_{S_{i_{n-1}}} F_{B_n},
\, F_{B_0}
\Bigr).
\]
Then
\[
\overrightarrow{C}^{n}
=
\bigoplus_{(P_0,\dots,P_n)\ \mathrm{admissible}}
F^{P_0}_{P_1,\dots,P_n}.
\]

Geometrically, each summand may be viewed as the cyclic tensor product attached
to the closed edge path obtained by traversing the negative boundaries
\[
\partial^-P_1,\ \partial^-P_2,\ \dots,\ \partial^-P_n
\]
and then returning along \(\partial^-P_0\) with the opposite orientation.

\smallskip
\noindent\textbf{Step 2: Filtration by handle length.}
Such a closed path may retrace itself on the left or on the right; these retraced
parts are called the \emph{left} and \emph{right handles}. Let \(G^l
\overrightarrow{C}^{\bullet}\) be the direct sum of those summands for which the
sum of the lengths of the two handles is at least \(l\). This gives a decreasing
filtration
\[
\overrightarrow{C}^{\bullet}=G^0\overrightarrow{C}^{\bullet}\supset
G^1\overrightarrow{C}^{\bullet}\supset G^2\overrightarrow{C}^{\bullet}\supset\cdots.
\]

We claim that each \(G^l\overrightarrow{C}^{\bullet}\) is a subcomplex. The
differential on \(\overrightarrow{C}^{\bullet}\) is the sum
\[
d+\delta,
\]
where \(d\) is induced by the internal differential of \(R\), while \(\delta\)
is the Hochschild differential. The term \(d\) does not change the combinatorial
shape of the path, hence preserves the filtration.

For \(\delta\), the first and last Hochschild terms amount to attaching an
additional infinite polygon on the left or on the right; this increases the
length of one of the handles.

The intermediate Hochschild terms correspond to
splitting one of the polygons \(P_\nu\) along an edge of the form
\([\,\tp,s\,]\), where \(s\) is an intermediate vertex; these operations do
not change the total handle length. Hence \(\delta\) preserves the filtration,
and so does \(d+\delta\).

\smallskip
\noindent\textbf{Step 3: The degree-zero graded piece.}
Consider the induced morphism
\[
\Psi\colon \mathfrak g \longrightarrow
\operatorname{gr}^0_G\overrightarrow{C}^{\bullet}
=
\overrightarrow{C}^{\bullet}/G^1\overrightarrow{C}^{\bullet}.
\]
The complex \(\operatorname{gr}^0_G\overrightarrow{C}^{\bullet}\) is the direct
sum of cyclic tensor products corresponding to closed paths without handles.

On the other hand, \(\mathfrak g\) is the direct sum of the cyclic tensor products
associated with the boundaries of convex unrooted marked polygons. These boundary
cycles are particular handle-free paths. Moreover, by Proposition~\ref{prop:matrix-elements-Phi}, the only
matrix component of \(\Psi\) on a summand \(E_{A'}\subset \mathfrak g\) which does
not land in \(G^1\overrightarrow{C}^{\bullet}\) is the transpose of the basic
contraction map
\[
\gamma_{Q'}\colon N_{\partial Q'}\longrightarrow E_{A'},
\]
and this transpose is the identity
\[
\gamma_{Q'^{t}}\colon E_{A'}\xrightarrow{\sim}N_{\partial Q'}=E_{A'}.
\]
Therefore \(\Psi\) is injective on \(\mathfrak g\), and its image consists exactly
of those handle-free summands for which the negative boundaries
\[
\partial^-P_1,\dots,\partial^-P_n
\]
are segments and, together with \(\partial^-P_0\), bound a convex unrooted polygon.

So it suffices to prove that the cokernel of this embedding is exact. As a graded
vector space, \(\operatorname{Coker}(\Psi)\) is the direct sum of the remaining
handle-free summands, i.e. those not coming from convex unrooted polygons.

In \(\operatorname{gr}^0_G\), the first and last Hochschild terms disappear,
because they create handles and therefore land in \(G^1\). The only remaining
part of the differential comes from splitting the polygons \(P_\nu\) along
intermediate vertices. Such a splitting does not change the underlying coefficient
factor; it only records a choice of split. Hence each summand of
\(\operatorname{Coker}(\Psi)\) decomposes as
\[
F^{P_0}_{P_1,\dots,P_n}\otimes C^\bullet(\Delta^{r-1}),
\]
where \(r\) is the number of available intermediate vertices at which such
splittings can occur, and \(C^\bullet(\Delta^{r-1})\) is the augmented simplicial
cochain complex of the \((r-1)\)-simplex.

This complex is exact whenever \(r\ge 1\). Therefore only those admissible
sequences survive for which there are no possible splittings and which cannot be
obtained by gluing two adjacent polygons into a larger convex polygon. The first
condition says that each \(P_\nu\) for \(\nu\ge 1\) is a triangle, so each
\(\partial^-P_\nu\) is a segment. The second says that these segments form an
upwardly convex broken line from \(i_0\) to \(i_n\), which together with the
downwardly convex broken line \(\partial^-P_0\) bounds a convex unrooted polygon.
But these are precisely the summands already lying in the image of \(\Psi\). Hence
\(\operatorname{Coker}(\Psi)\) is exact, and \(\Psi\) induces a quasi-isomorphism
\[
\mathfrak g \xrightarrow{\sim} \operatorname{gr}^0_G\overrightarrow{C}^{\bullet}.
\]

\smallskip
\noindent\textbf{Step 4: Higher graded pieces.}
It remains to show that, for each \(l\ge 1\),
\[
\operatorname{gr}^l_G\overrightarrow{C}^{\bullet}
\]
is exact. The argument is the same as in Step 3. In the associated graded, the
first and last Hochschild terms again disappear, because they strictly increase
handle length. Thus the induced differential is given only by splittings of the
polygons \(P_\nu\), together with the internal differential \(d\) coming from \(R\).

Fix one connected shape of picture with total handle length \(l\). The
corresponding summands form a subcomplex. Since \(l\ge 1\), at least one of the
two handles is nontrivial. Suppose, for instance, that the left handle is
nontrivial. Then the union of that handle with the first adjacent segment admits
a nontrivial family of splittings. Exactly as above, this produces a tensor factor
of the form
\[
C^\bullet(\Delta^{r-1})
\]
with \(r\ge 1\), hence an exact factor. The same applies if the nontrivial handle
is the right one. Therefore every summand in \(\operatorname{gr}^l_G
\overrightarrow{C}^{\bullet}\) is exact, and so the whole graded piece is exact.

Since \(\Psi\) induces a quasi-isomorphism on \(\operatorname{gr}^0_G\) and all
higher graded pieces are exact, \(\Psi\) itself is a quasi-isomorphism. This
completes the proof.
\end{proof}

\section{Fukaya--Seidel categories over a base curve}
\label{sec:mc-fs-torus}

In this section we sketch an application of the previous
considerations to Fukaya-Seidel categories.
For the general theory we refer to Seidel's book \cite{Seidel} and his earlier work
\cite{SeidelVanishingMutation} for the foundational construction, and to
\cite{AurouxNotes} for an accessible survey.
For more recent developments relating Fukaya--Seidel categories and partially wrapped Fukaya categories, see also
\cite{GPS}.

In what follows, we focus on Fukaya--Seidel categories over a curve base. For the relevant general framework over an arbitrary base curve, we refer to \cite{KS}.
Our goal is to formulate the expected deformation principle:
the total $A_\infty$-algebra $R_S$ of a Fukaya--Seidel category should be obtainable
as a Maurer--Cartan deformation of a combinatorial $A_\infty$-algebra $R$.

\subsection{Fukaya--Seidel category on a base curve}
\label{sec:fs-surface}

Let $(X,\omega)$ be an exact K\"ahler manifold, $Y$ a complex curve,
and
\[
f: X \to Y
\]
a holomorphic map with Morse critical points and pairwise distinct
critical values. Fix a regular value (basepoint) $z\in Y$ and a nonzero real tangent
vector $\partial_z\in T_zY$.

Choose a small embedded disc $D_z\subset Y$ centered at $z$ such that
$-\partial_x$ at the center corresponds to $\partial_z$ under an identification
$D_z\simeq D=\{w\in\mathbb C:|w|\le 1\}$. On the model disc $D$ one considers a
Hamiltonian flow which is the identity on $\partial D$ and which, morally,
pushes Lagrangians slightly along the chosen direction at the basepoint.
Transporting this by the identification $D_z\simeq D$ gives a symplectomorphism
$\phi$ supported near $z$; extend it by the identity on $Y\setminus D_z$.
(We keep the notation $\phi$ for the resulting perturbation used in Floer theory.)

Write $F_z=f^{-1}(z)$ for the smooth fiber over $z$, and set
\[
X_z := X\setminus F_z .
\]
An \emph{$f$-admissible path} is an immersed path $\delta:[0,1]\to Y$ such that
\[
\delta([0,1))\subset Y\setminus \mathrm{Critv}(f),\qquad \delta(1)\in \mathrm{Critv}(f),
\qquad \delta(0)=z,\qquad \delta'(0)\neq \partial_z .
\]

\begin{figure}[h]
\centering
\begin{overpic}[width=0.6\textwidth]{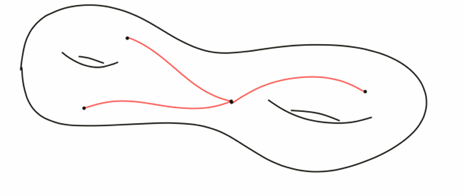}
    \put(48,23){$z$}
    
\end{overpic}
\caption{Admissible paths.}
\label{fig:FS}
\end{figure}

Let $T_\delta\subset X$ be the associated vanishing thimble over $\delta$ and
let $T_\delta^\circ:=T_\delta\cap X_z$ be the \emph{open thimble}.
An \emph{admissible Lagrangian thimble} is such an open thimble $T_\delta^\circ\subset X_z$.

Using the holomorphic volume form (as in the standard Fukaya--Seidel
setup), equip $T_\delta^\circ$ with:
\begin{itemize}
  \item a grading $\alpha$, and
  \item a Pin structure $\beta$,
\end{itemize}
and write the resulting Lagrangian brane as
\[
L_\delta := (T_\delta^\circ,\alpha,\beta).
\]

Define an $A_\infty$-category $A_z(f)$ as follows:
\begin{itemize}
\item \textbf{Objects:} admissible Lagrangian branes $L_\delta$.
\item \textbf{Morphisms:} for objects $L_0,L_1$,
\[
\mathrm{Hom}_{A_z(f)}(L_0,L_1):=CF^*(L_0,\phi(L_1)),
\]
the Floer complex (for a universal choice of perturbation data ensuring transversality).
\item \textbf{Higher compositions:} the $A_\infty$-operations $\mu^k$ are defined by
counts of suitable pseudoholomorphic polygons with boundary on the branes, with signs
determined by the Pin structures and degrees determined by the gradings.
\end{itemize}

\begin{definition}
The \emph{Fukaya--Seidel category of $f$ at the basepoint $z$} is the category of
twisted complexes over $A_z(f)$:
\[
\FS_z(f) := \mathrm{Tw}\bigl(A_z(f)\bigr).
\]
\end{definition}

\begin{remark}
When $Y=\mathbb C$ there is an alternative description via a certain $\mathbb Z/2$-cover
construction, but for general base curves $Y$ this route may not be available.
In this paper we use the above definition over an arbitrary base curve.
\end{remark}

In the case where the base is a general curve, a finite directed model of the
Fukaya--Seidel category depends on the choice of a distinguished collection of
admissible paths
\[
S=(\delta_1,\ldots,\delta_r).
\]
Indeed, unlike the classical case of a potential valued in the complex plane,
there is in general no preferred half-plane at the basepoint which selects a
canonical finite ordered system of thimbles. The admissible paths may leave the
basepoint in different tangent directions, and may also wind around nontrivial
cycles of the base curve before reaching a critical value.

Equivalently, if \(\gamma\in \pi_1(Y,z)\) and \(\delta\) is an admissible path
from \(z\) to a critical value \(w_i\), then the concatenation
\[
\gamma\cdot \delta
\]
(after a small smoothing near \(z\)) gives another admissible path with the
same endpoint. Thus the fundamental group of the base curve acts on the set of
admissible paths, and hence on the set of admissible thimbles. In particular,
even when \(f\) has only finitely many critical values, the collection of
admissible thimbles is typically infinite.

For this reason, the category \(A_z(f)\) should be viewed as a large
\(A_\infty\)-category. A finite distinguished collection \(S\) determines a
finite directed full subcategory
\[
A_S\subset A_z(f),
\]
generated by the branes
\[
L_{\delta_1},\ldots,L_{\delta_r},
\]
together with the chosen ordering of this collection. The corresponding total
algebra is
\[
R_S
=
\bigoplus_{i\le j}
\Hom^\bullet_{A_z(f)}(L_{\delta_i},L_{\delta_j}),
\]
with the \(A_\infty\)-operations inherited from \(A_z(f)\) and restricted to
this ordered collection. We use the directed convention that
\[
\Hom^\bullet_{A_S}(L_{\delta_i},L_{\delta_i})=\Bbbk e_i,
\qquad
\Hom^\bullet_{A_S}(L_{\delta_i},L_{\delta_j})=0
\quad\text{for } i>j.
\]

\medskip
We next record the effect of monodromy on the directed subcategories introduced
above.  We will use the standard naturality of Floer complexes under
symplectomorphisms.
Let
\[
\Psi:X_z\longrightarrow X_z
\]
be a symplectomorphism preserving the class of admissible Lagrangian branes.
For two branes \(L_0,L_1\), the Floer complexes
\[
CF^\bullet(L_0,L_1)
\qquad\text{and}\qquad
CF^\bullet(\Psi(L_0),\Psi(L_1))
\]
are naturally identified, provided the Floer data are transported by \(\Psi\).

Applying the same monodromy element to the whole distinguished collection
produces an equivalent directed subcategory, via the induced autoequivalence of
the Fukaya category of the fiber.

However, this should be distinguished from applying monodromy to only one
object. Replacing a single path \(\delta_i\) by a wrapped path
\(\gamma\cdot\delta_i\), while keeping the other paths fixed, generally changes
the relative position of this thimble with respect to the others. Therefore the
Floer complexes
need not be quasi-isomorphic, and the higher products involving this object may
also change. This one-sided operation is the source of nontrivial
monodromy-type transformations among the various directed presentations.

Different choices of \(S\) may therefore give different directed subcategories
and different total algebras \(R_S\). In the curve--valued setting, the total
algebra is not a single object canonically attached only to \(f\) and \(z\), but
rather a presentation-dependent object attached to the additional choice of a
finite distinguished collection of admissible thimbles.

This dependence is one of the new features of the curve--valued case. The
fundamental group of the base curve produces different presentations of the
Fukaya--Seidel category by acting on admissible paths. Thus one expects the
family of total algebras \(\{R_S\}_S\) to be related by monodromy-type
transformations.

\subsection{The lifted complex Morse model and  Maurer--Cartan element}
\label{subsec:lifted-complex-morse-model}

We now explain how the complex Morse model of the Fukaya--Seidel category is
adapted to the curve--valued setting. In this subsection we specialize to the
case where the base curve is an elliptic curve $E=\mathbb C/\Lambda$
and the potential is a holomorphic map
\[
f:X\longrightarrow E.
\]
We assume that \(f\) has finitely many Morse critical points $\Crit(f)=\{x_1,\ldots,x_r\},$ with pairwise distinct critical values $w_i:=f(x_i)\in E.$

Let $\pi:\mathbb C\longrightarrow E$ be the universal covering map. We form the fiber product
\[
\widetilde X:=X\times_E \mathbb C .
\]
Thus we have a commutative diagram
\[
\begin{CD}
\widetilde X @>{\widetilde f}>> \mathbb C \\
@V{p}VV @VV{\pi}V \\
X @>{f}>> E ,
\end{CD}
\]
where \(p:\widetilde X\to X\) is the covering map and $\widetilde f:\widetilde X\longrightarrow \mathbb C$ is the lifted potential. The K\"ahler form, holomorphic volume form, and brane
structures on \(X\) are pulled back to \(\widetilde X\).

Fix a finite distinguished collection of admissible paths
\[
S=(\delta_1,\ldots,\delta_r)
\]
from the basepoint \(z\in E\) to the critical values \(w_i\). Choose a lift $\widetilde z\in \mathbb C$ of \(z\). Each admissible path \(\delta_i\) has a unique lift
\[
\widetilde\delta_i:[0,1]\longrightarrow \mathbb C
\]
starting at \(\widetilde z\). We write $\widetilde w_i:=\widetilde\delta_i(1)$ for the corresponding lift of the critical value \(w_i\). Thus the
distinguished collection \(S\) determines a lifted point configuration
\[
\widetilde A_S=\{\widetilde w_1,\ldots,\widetilde w_r\}\subset \mathbb C .
\]
This is the configuration to which the algebraic construction of the previous
sections is applied.

The admissible thimble associated to \(\delta_i\) also lifts. Let $L_{\delta_i}\subset X_z$ be the admissible Lagrangian brane associated to \(\delta_i\). The lift
\(\widetilde\delta_i\) determines a distinguished lift of this thimble to
\(\widetilde X\), which we denote by $\widetilde L_i\subset \widetilde X_{\widetilde z}$.

For a fixed admissible path \(\delta_i\), the lifted brane
\(\widetilde L_i\) is not a new local object. Rather, it is the distinguished
lift of the same brane determined by the chosen lift \(\widetilde\delta_i\).
Since \(\widetilde\delta_i\) projects homeomorphically to \(\delta_i\), the
covering map
\[
p:\widetilde X\longrightarrow X
\]
identifies the lifted brane with the downstairs brane:
\[
\widetilde L_i\simeq L_{\delta_i}.
\]
Thus, for one path at a time, passing to the universal cover only chooses a
representative upstairs. The lift becomes essential only for a finite
collection of paths, through the relative positions of the lifted endpoints
\[
\widetilde w_i=\widetilde\delta_i(1).
\]

The absolute choice of \(\widetilde z\) is not essential, see \cite{SeidelVanishingMutation,SeidelMoreVanishingMutation}. 
Downstairs, the simultaneous deck transformation corresponds to applying the
same global monodromy element to every admissible path in \(S\). By the
monodromy invariance discussed above, global monodromy does not change the
directed Fukaya--Seidel subcategory up to \(A_\infty\)-quasi-equivalence, nor
the corresponding total algebra up to quasi-isomorphism. Therefore, on the
Fukaya--Seidel side, the absolute lift of \(z\) is not part of the essential
data. What matters is the relative lifted configuration of the chosen paths,
modulo simultaneous deck transformations.

This distinction matches the phenomenon already observed in the algebraic
construction of the \(A_\infty\)-algebra. There one chooses a lift
\(\widetilde p\) of the stop or basepoint in the universal cover, and the
resulting directed algebra depends on the relative position of \(\widetilde p\)
with respect to the lifted point configuration. A simultaneous deck
transformation of the whole lifted picture does not change the essential
directed Fukaya--Seidel subcategory. However, changing the relative lift of one
endpoint, or moving \(\widetilde p\) across a wall, changes the combinatorics of
the lifted configuration. Thus the chamber structure in the algebraic
\(A_\infty\)-model is reflected on the Fukaya--Seidel side by the monodromy
dependence of finite directed collections of thimbles.

\medskip

Assume that
\[
\{\widetilde w_1,\dots,\widetilde w_r\}\subset \mathbb C
\]
is generic with respect to the chosen direction; for example, no two points have
the same imaginary part in the coordinate where \(\partial_z\) corresponds to
\(-\partial_x\) at \(\widetilde z\). For a pair of lifted critical values
\(\widetilde w_i,\widetilde w_j\), define
\[
\zeta_{ij}
=
\left(
\frac{\widetilde w_i-\widetilde w_j}
{|\widetilde w_i-\widetilde w_j|}
\right)^{-1}.
\]
Then
\[
\operatorname{Re}(\zeta_{ij}\widetilde f):\widetilde X\longrightarrow \mathbb R
\]
is the real Morse function used to define solitons between the corresponding
lifted critical points.

\begin{definition}
A \(\zeta_{ij}\)-soliton from \(\widetilde x_i\) to \(\widetilde x_j\) is a
downward gradient trajectory of
\[
\operatorname{Re}(\zeta_{ij}\widetilde f)
\]
connecting these two critical points. Its image under \(\widetilde f\) lies
over the straight segment
\[
[\widetilde w_i,\widetilde w_j]\subset \mathbb C .
\]
\end{definition}

Let \(N_{ij}\) be the graded vector space generated by such
\(\zeta_{ij}\)-solitons from \(\widetilde x_i\) to \(\widetilde x_j\). The
grading is given by the Maslov index, using the pulled-back holomorphic volume
form on \(\widetilde X\). These spaces play the role of the coefficient spaces
attached to the edges of the lifted point configuration.

More generally, let
\[
\widetilde Q'
=
\operatorname{Conv}(\widetilde w_{i_0},\ldots,\widetilde w_{i_m})
\subset \mathbb C
\]
be a polygon whose vertices are contained in \(\widetilde A_S\). A lifted
gradient polygon over \(\widetilde Q'\) consists of a cyclic sequence of
solitons
\[
\phi=
(\phi_{i_0i_1},\phi_{i_1i_2},\ldots,\phi_{i_mi_0}),
\]
where each \(\phi_{i_\nu i_{\nu+1}}\) is a
\(\zeta_{i_\nu i_{\nu+1}}\)-soliton and projects under \(\widetilde f\) to the
edge
\[
[\widetilde w_{i_\nu},\widetilde w_{i_{\nu+1}}].
\]

\medskip

As in the complex--valued case, the lifted gradient polygons should be viewed as
the asymptotic data for instantons.  We now describe the expected moduli spaces
more explicitly.
We do not attempt here to give a complete analytic construction of these moduli spaces, including compactness, transversality, gluing, and orientations. Rather, we describe the complex Morse model expected to produce the Maurer--Cartan element associated to the lifted configuration.

Fix a lifted polygon
\[
\widetilde Q'
=
\operatorname{Conv}(\widetilde w_{i_0},\ldots,\widetilde w_{i_m})
\subset \mathbb C
\]
with vertices in \(\widetilde A_S\).  Let
\[
\Sigma_{\widetilde Q'}
\]
be a punctured disc with \(m+1\) strip-like ends, one end for each oriented edge
\[
[\widetilde w_{i_\nu},\widetilde w_{i_{\nu+1}}].
\]
On the end corresponding to this edge, we use coordinates \((s,t)\) and the
phase
\[
\zeta_{i_\nu i_{\nu+1}}
=
\left(
\frac{\widetilde w_{i_\nu}-\widetilde w_{i_{\nu+1}}}
{|\widetilde w_{i_\nu}-\widetilde w_{i_{\nu+1}}|}
\right)^{-1}.
\]
Choose a domain-dependent perturbation datum on \(\Sigma_{\widetilde Q'}\)
which, on this end, becomes the fixed phase
\(\zeta_{i_\nu i_{\nu+1}}\).  Then an instanton associated to
\(\widetilde Q'\) is a map
\[
u:\Sigma_{\widetilde Q'}\longrightarrow \widetilde X
\]
satisfying the corresponding Witten equation, or complex gradient flow equation, see \cite{WittenPhases}.
We use it in the sense of the complex Morse
theory model for Fukaya--Seidel categories; see
Gaiotto--Moore--Witten~\cite{GMW15}, Haydys~\cite{HaydysFS} and Wang~\cite{WangComplexGradient}.  For a survey of this perspective, see
Doan--Rezchikov~\cite[Section~2.2]{DoanRezchikovFueter}.

In local strip-like coordinates this equation has the form
\[
\partial_s u
+
I(u)\Bigl(
\partial_t u
+
\nabla \operatorname{Re}(\zeta\,\widetilde f)(u)
\Bigr)
=0,
\]
where \(I\) is the complex structure on \(\widetilde X\), and where
\(\zeta\) is the phase determined by the perturbation datum.  Equivalently, one
may write it as an inhomogeneous Cauchy--Riemann equation
\[
(du-X_{\operatorname{Im}(\zeta \widetilde f)}\otimes dt)^{0,1}=0,
\]
up to the usual convention on the sign of the Hamiltonian vector field.  With
our sign convention, the \(s\)-independent solutions on the end are precisely
downward gradient trajectories of $\operatorname{Re}(\zeta_{i_\nu i_{\nu+1}}\widetilde f)$.

Now choose a cyclic collection of solitons
\[
\phi=
(\phi_{i_0i_1},\phi_{i_1i_2},\ldots,\phi_{i_mi_0})
\]
along the boundary of \(\widetilde Q'\).  We define $\mathcal M_{\widetilde Q'}(\phi)$ to be the moduli space of solutions \(u\) of the above Witten equation satisfying
the following asymptotic conditions: on the strip-like end corresponding to the
edge $[\widetilde w_{i_\nu},\widetilde w_{i_{\nu+1}}]$, the map \(u\) converges, as \(s\to+\infty\), to the prescribed soliton $\phi_{i_\nu i_{\nu+1}}$.

Thus the boundary and end behavior of \(u\) is encoded by the cyclic soliton
data \(\phi\).

Informally, such an instanton fills the polygon \(\widetilde Q'\) in the total
space \(\widetilde X\).  Its projection under the lifted potential $\widetilde f:\widetilde X\to\mathbb C$ has asymptotic shape controlled by the polygon \(\widetilde Q'\), and along each
edge it limits to the corresponding soliton.  The moduli space
\(\mathcal M_{\widetilde Q'}(\phi)\) should therefore be regarded as the space
of Witten solutions with boundary asymptotics prescribed by the lifted gradient
polygon \(\phi\).

The expected dimension of this moduli space is determined by the Maslov degrees
of the solitons. We denote it by
\[
\dim \mathcal M_{\widetilde Q'}(\phi)=d(\phi)-1.
\]
In particular, when \(d(\phi)=1\), the moduli space is expected to be
zero-dimensional. After choosing coherent orientations, the signed count of
points in this zero-dimensional moduli space defines a coefficient
\[
\#\mathcal M_{\widetilde Q'}(\phi)\in \Bbbk .
\]
Equivalently, the count gives a multilinear contribution associated to the
cyclic tensor
\[
\phi_{i_0i_1}\otimes \phi_{i_1i_2}\otimes \cdots \otimes \phi_{i_mi_0}.
\]

Summing these contributions over all lifted polygons \(\widetilde Q'\) with
vertices in \(\widetilde A_S\), and over all cyclic collections of solitons
\(\phi\) of degree \(d(\phi)=1\), one obtains an element
\[
\gamma_S
=
\sum_{\widetilde Q'}\sum_{d(\phi)=1}
\#\mathcal M_{\widetilde Q'}(\phi)\,\phi
\in
\mathfrak g_{\widetilde A_S}^{1}.
\]
Here \(\mathfrak g_{\widetilde A_S}\) is the \(L_\infty\)-algebra associated to
the lifted configuration \(\widetilde A_S\), with coefficients given by the
soliton spaces \(N_{ij}\). Thus \(\gamma_S\) should be understood as the
instanton-counting element attached to the lifted complex Morse model.

The Maurer--Cartan equation for \(\gamma_S\) is expected to follow from the
compactification of one-dimensional moduli spaces. Indeed, when
\[
\dim \mathcal M_{\widetilde Q'}(\phi)=1,
\]
the compactified moduli space should have boundary strata of two types. The
first type comes from breaking of instantons along intermediate soliton data.
The second type comes from degenerations in which the polygon
\(\widetilde Q'\) splits into smaller lifted polygons. Combinatorially, these
degenerations are indexed by polygonal subdivisions of \(\widetilde Q'\).

These boundary decompositions are precisely the terms appearing in the
\(L_\infty\) Maurer--Cartan equation
\[
d\gamma_S
+
\frac{1}{2}[\gamma_S,\gamma_S]
+
\frac{1}{3!}\ell_3(\gamma_S,\gamma_S,\gamma_S)
+\cdots
=0.
\]
The vanishing of the signed boundary of each compactified one-dimensional
moduli space therefore gives the Maurer--Cartan equation. In this sense,
the \(L_\infty\)-operations coming from polygonal subdivisions encode the
possible boundary strata of the instanton moduli spaces.

\subsection{Comparison with the complex--valued Morse model}
\label{subsec:comparison-complex-valued-morse-model}

We now compare the lifted construction above with the usual complex--valued
Morse model for Fukaya--Seidel categories, as used in the algebra of the
infrared; see \cite{KKS,GMW15}.  In the classical setting one starts with a
holomorphic Morse function
\[
W:X\longrightarrow \mathbb C .
\]
The critical values of \(W\) form a finite point configuration in the affine
plane \(\mathbb C\).  Thus the complex Morse model is organized directly by
the affine geometry of the target: one can form straight line segments between
critical values, define phases by taking differences of critical values, and
consider gradient trajectories of the real functions
\[
\operatorname{Re}(\zeta_{ij}W),
\qquad
\zeta_{ij}
=
\left(
\frac{W(x_i)-W(x_j)}
{|W(x_i)-W(x_j)|}
\right)^{-1}.
\]
The corresponding solitons, gradient polygons, and instanton moduli spaces are
therefore defined directly in the \(W\)-plane.

In the curve--valued setting considered here, the potential is a holomorphic
map
\[
f:X\longrightarrow E=\mathbb C/\Lambda .
\]
The critical values lie in \(E\), not in an affine space.  Hence there
is no globally defined difference \(w_i-w_j\) between two critical values and
no global affine plane in which to draw the soliton web.  To recover the
complex Morse model, we pass to the universal cover.  The fiber product
\[
\widetilde X=X\times_E\mathbb C
\]
carries a lifted holomorphic function
\[
\widetilde f:\widetilde X\longrightarrow \mathbb C .
\]
After choosing a lift of the basepoint and lifts of the admissible paths, the
critical values acquire lifts
\[
\widetilde A_S=\{\widetilde w_1,\ldots,\widetilde w_r\}\subset \mathbb C .
\]
The complex Morse theory is then performed upstairs, using the ordinary affine
geometry of \(\mathbb C\).  In this lifted model, straight segments, phases,
solitons, lifted gradient polygons, and the expected instanton moduli spaces
are defined in the same formal way as in the complex--valued model.

The difference between the two settings is therefore not local but global.  A
single lifted path does not produce a genuinely new local thimble.  Indeed, let
\(\delta_i\) be an admissible path from \(z\) to \(w_i\), and let
\(\widetilde\delta_i\) be its lift starting at a chosen lift
\(\widetilde z\).  The lift \(\widetilde\delta_i\) determines a corresponding
lifted brane \(\widetilde L_i\) in the fiber
\(\widetilde X_{\widetilde z}\).  Since
\(\widetilde\delta_i\) projects homeomorphically to \(\delta_i\), the covering
map
\[
p:\widetilde X\longrightarrow X
\]
restricts to an identification of fibers \[ p_{\widetilde z}:\widetilde X_{\widetilde z}\xrightarrow{\sim} X_z . \] Under this identification, the lifted brane \(\widetilde L_i\) corresponds to the original brane \(L_{\delta_i}\): \[ p_{\widetilde z}(\widetilde L_i)=L_{\delta_i}. \] Equivalently, after identifying the two fibers by \(p_{\widetilde z}\), the brane \(\widetilde L_i\) may be regarded as the same local brane as \(L_{\delta_i}\).

However, for a finite distinguished collection
\[
S=(\delta_1,\ldots,\delta_r),
\]
the relative positions of the lifted endpoints
\[
\widetilde w_i=\widetilde\delta_i(1)
\]
carry additional information.  A simultaneous deck transformation of all
lifts translates the whole picture in \(\mathbb C\) and does not change the
essential directed Fukaya--Seidel subcategory, up to the corresponding global
monodromy equivalence.  By contrast, changing the lift of only one endpoint is
equivalent downstairs to composing the corresponding path with a loop in
\(E\).  This can change the monodromy applied to the associated thimble and can
therefore change the directed collection.

This is the geometric origin of the sheet dependence in the algebraic model.
The lifted point configuration \(\widetilde A_S\) determines the straight
segments, phases, and polygonal combinatorics used in the construction of the
\(L_\infty\)-algebra and the associated \(A_\infty\)-algebra.  Different
relative choices of lifts may give different configurations in the affine
plane \(\mathbb C\), and hence different algebraic models.  Thus the elliptic
curve case should be viewed as a lifted version of the complex--valued Morse
model, together with additional monodromy data coming from the
deck group \(\Lambda\).

\subsection{Expected results}
We conclude this section with a conjecture relating the lifted
complex Morse model to the Fukaya--Seidel category. These conjectures should be
viewed as curve--valued analogues of the conjectural comparison proposed by
Gaiotto--Moore--Witten~\cite{GMW15} and formulated mathematically, in the
complex--valued case, by Kapranov--Kontsevich--Soibelman~\cite{KKS}.

\begin{conjecture}
\label{conj:fs-total-algebra-from-aoi}
Let $S=(\delta_1,\ldots,\delta_r)$ be a finite ordered distinguished collection of admissible paths in the
curve--valued Fukaya--Seidel category. Let $L_{\delta_1},\ldots,L_{\delta_r}$ be the corresponding branes, and let \(R_S\) be the directed total algebra of
the full subcategory generated by this ordered collection.

The choice of \(S\), together with a choice of lift of the basepoint, determines
a lifted critical value configuration
\[
\widetilde A_S=\{\widetilde w_1,\ldots,\widetilde w_r\}\subset \mathbb C
\]
and hence an \(L_\infty\)-algebra $\mathfrak g_{\widetilde A_S}$ as in Section~\ref{different-lift}.

Choose a lift \(\widetilde p\) of the stop lying in a chamber $\mathfrak C_S$ such that the linear order on \(\widetilde A_S\) induced by \(\widetilde p\)
agrees with the ordering of the distinguished collection \(S\). This chamber
determines the directed algebra $R_{\widetilde A_S,\mathfrak C_S}$, or equivalently \(R_{\widetilde A_S,\widetilde p}\), appearing in the
\(A_\infty\)-construction in Section~\ref{A-inf-chamber}.

The lifted complex Morse model associated to the thimbles in \(S\) defines a
Maurer--Cartan element $\gamma_S\in \mathfrak g_{\widetilde A_S}^{1}$.
Under the chamberwise universality morphism in Section~\ref{uni-thm}
\[
\Psi_{\mathfrak C_S}\colon
\mathfrak g_{\widetilde A_S}
\longrightarrow
\overrightarrow{C}^{\ge 1}
(R_{\mathfrak C_S},R_{\mathfrak C_S})[1],
\]
the Maurer--Cartan element \(\gamma_S\) determines a deformation of the
directed \(A_\infty\)-algebra \(R_{\mathfrak C_S}\). Denote the
resulting deformed \(A_\infty\)-algebra by $R_{\mathfrak C_S,\gamma_S}$.
Then there is an \(A_\infty\)-quasi-isomorphism
\[
R_{\mathfrak C_S,\gamma_S}
\simeq
R_S.
\]
In other words, after choosing the chamber whose induced order agrees with the
ordered distinguished collection \(S\), the algebra of the infrared recovers the
directed total algebra of the Fukaya--Seidel subcategory generated by \(S\).
\end{conjecture}
\begin{remark}
The chamber \(\mathfrak C_S\) is part of the comparison data.  On the algebraic
side, different chambers for the lift of the stop/basepoint may give different
directed \(A_\infty\)-algebras.  On the Fukaya--Seidel side, the ordered
collection \(S\) fixes the directed presentation of the subcategory.  Thus the
comparison uses the chamber for which the induced order of the lifted critical
values agrees with the order of \(S\).
\end{remark}

The conjecture should be read as depending on the chosen distinguished
collection \(S\). If the collection is changed, then the lifted critical value
configuration may also change, and hence the corresponding \(L_\infty\)-algebra
and Maurer--Cartan element may change. This is one of the main new features of
the curve--valued case: wrapping admissible paths around nontrivial cycles of the
base curve is reflected algebraically by replacing some lifted critical values
by deck translates. The following example illustrates this dependence.

Let
\[
S=(\delta_1,\ldots,\delta_r)
\]
be a finite distinguished collection, and let
\[
\widetilde A_S=\{\widetilde w_1,\ldots,\widetilde w_r\}
\]
be the corresponding lifted critical value configuration. Suppose that we wrap
some of the admissible paths around nontrivial cycles of \(E\). Equivalently,
choose elements
\[
\lambda_1,\ldots,\lambda_r\in \Lambda ,
\]
where \(\lambda_i=0\) means that the path \(\delta_i\) is not changed. We write
\[
S^{\boldsymbol\lambda}
=
(\lambda_1\delta_1,\ldots,\lambda_r\delta_r),
\qquad
\boldsymbol\lambda=(\lambda_1,\ldots,\lambda_r).
\]
Here \(\lambda_i\delta_i\) denotes the admissible path obtained by first winding
around the loop corresponding to \(\lambda_i\), and then following \(\delta_i\),
with a small smoothing near the basepoint.

The lift of \(\lambda_i\delta_i\) starting at the fixed lift \(\widetilde z\)
ends at
\[
\widetilde w_i+\lambda_i.
\]
Therefore the partially wrapped collection \(S^{\boldsymbol\lambda}\) determines
the lifted configuration
\[
\widetilde A_{S^{\boldsymbol\lambda}}
=
\{\widetilde w_1+\lambda_1,\ldots,\widetilde w_r+\lambda_r\}
\subset \mathbb C .
\]

Thus partial wrapping changes the algebraic input of the algebra construction. In
general, the relevant \(L_\infty\)-algebra is no longer $\mathfrak g_{\widetilde A_S}$,
but rather $\mathfrak g_{\widetilde A_{S^{\boldsymbol\lambda}}}.$

The lifted complex Morse model for the new thimble collection produces a
Maurer--Cartan element
\[
\gamma_{S^{\boldsymbol\lambda}}
\in
\mathfrak g_{\widetilde A_{S^{\boldsymbol\lambda}}}^{1}.
\]
Under the chamberwise universality morphism, this Maurer--Cartan element gives
a deformation
\[
R_{\widetilde A_{S^{\boldsymbol\lambda}},
\gamma_{S^{\boldsymbol\lambda}}}.
\]
By the conjectural comparison above, this deformed algebra is expected to
recover the total algebra of the Fukaya--Seidel subcategory generated by the
partially wrapped collection:
\[
R_{\widetilde A_{S^{\boldsymbol\lambda}},
\gamma_{S^{\boldsymbol\lambda}}}
\simeq
R_{S^{\boldsymbol\lambda}}.
\]

\bibliographystyle{alpha}
\bibliography{references}

\end{document}